\documentclass[10pt]{article}
\usepackage[left=3.8cm, right=3.8cm, top=3.8cm, bottom=2.25cm]{geometry}
\usepackage{amsmath,color}
\usepackage{amsfonts,amsthm}
\usepackage{amssymb,enumerate,enumitem,verbatim}
\usepackage[pdftex]{graphicx}
\usepackage[export]{adjustbox}
\usepackage{amsmath,setspace,scalefnt}
\usepackage[usenames,dvipsnames,svgnames,table]{xcolor}
\usepackage{amsfonts,amsthm}
\usepackage{amssymb,enumitem,verbatim}
\usepackage{graphicx}
\usepackage{tikz,caption,subcaption,thm-restate}
\usetikzlibrary{calc,decorations.pathmorphing}
\usepackage[titles]{tocloft}
\usepackage[symbol]{footmisc}
\usepackage{hyperref}
\usepackage{appendix}

\newcounter{capitalcounter}

\newcounter{claimcounter}

\newcounter{myfootnote}[page]

\newcommand{\Footnote}[1]{\stepcounter{myfootnote}\footnote{#1}}

\newtheorem{lemma}{Lemma}[section]
\newtheorem{corollary}[lemma]{Corollary}
\newtheorem{theorem}[lemma]{Theorem}

\newtheorem{prop}[lemma]{Proposition}
\newtheorem{conjecture}[lemma]{Conjecture}

\theoremstyle{definition}
\newtheorem{defn}[lemma]{Definition}

\newtheorem{claim}{Claim}

\theoremstyle{remark}
\newtheorem*{rmk}{Remark}

\newtheorem*{example}{Example}
\newtheorem*{nte}{Note}

\global\long\def\eps{\varepsilon}

\global\long\def\N{\mathbb{N}}
\global\long\def\R{\mathbb{R}}

\global\long\def\re{\begin{rmk}}
\global\long\def\mark{\end{rmk}}
\global\long\def\ex{\begin{example}}
\global\long\def\ple{\end{example}}
\global\long\def\no{\begin{nte}}
\global\long\def\ted{\end{nte}}
\global\long\def\en{\begin{compactenum}}
\global\long\def\um{\end{compactenum}}
\global\long\def\li{\begin{compactitem}}
\global\long\def\st{\end{compactitem}}
\global\long\def\de{\begin{defn}}
\global\long\def\fn{\end{defn}}
\global\long\def\cor{\begin{corollary}}
\global\long\def\ary{\end{corollary}}
\global\long\def\lem{\begin{lemma}}
\global\long\def\ma{\end{lemma}}
\global\long\def\arr{\begin{array}}
\global\long\def\ay{\end{array}}
\global\long\def\pr{\begin{proof}}
\global\long\def\oof{\end{proof}}

\newcounter{propcounter}

\newcommand{\claimproofstart}[1][Proof]{\begin{proof}[#1]
\renewcommand{\qedsymbol}{$\boxdot$}}
\newcommand\claimproofend{
\end{proof}
\renewcommand{\qedsymbol}{$\square$}}

\newcommand{\llpoly}{\stackrel{\scriptscriptstyle{\text{\sc poly}}}{\ll}}

\hypersetup{hidelinks}

\newcommand{\Alabel}[1]{%
  \textbf{A#1%
    \ifnum#1=1 *%
    \else\ifnum#1=7 *%
    \fi\fi
  }%
}

\makeatletter
\newcommand{\astar}[1]{%
  \expandafter\@astar\csname c@#1\endcsname
}
\newcommand{\@astar}[1]{%
  \@arabic#1%
  \ifnum#1=1 *%
  \else\ifnum#1=7 *%
  \fi\fi
}
\AddEnumerateCounter{\astar}{\@astar}{7*}
\makeatother

\title{A proof of Andersen's rainbow path conjecture for large $n$}
\author{Candida Bowtell\thanks{School of Mathematics, University of Birmingham, Edgbaston, Birmingham, UK. Research supported by Leverhulme Trust Early Career Fellowship ECF--2023--393. Email: {\tt c.bowtell@bham.ac.uk}}\and Richard Montgomery\thanks{Mathematics Institute, University of Warwick, Coventry, CV4 7AL, UK. Research supported by the European Research Council (ERC) under the European Union Horizon 2020 research and innovation programme (grant agreement No.\ 947978). Email: {\tt richard.montgomery@warwick.ac.uk} } \and Alp M\"uyesser\thanks{Mathematical Institute, University of Oxford, Oxford, UK. Email: {\tt alp.muyesser@new.ox.ac.uk}} \and Alexey Pokrovskiy\thanks{Department of Mathematics, University College London, London, UK. {Email}: {\tt dralexeypokrovskiy@gmail.com}}}

\begin{document}
\maketitle
\begin{abstract}
We show that, for sufficiently large $n$, every properly edge-coloured $n$-vertex complete graph contains a path with $n-1$ vertices which uses each colour at most once (that is, a rainbow path). This resolves a conjecture of Andersen from 1989 for all large $n$ and improves previous results of Alon–Pokrovskiy–Sudakov, and then Balogh–Molla, which showed that rainbow paths/cycles of length $n-O(n^{1/2}\log n)$ exist in this setting. Furthermore, with related methods, we show that, for every sufficiently large $n$, every Latin square of order $n$ contains a cycle-free transversal of order $n-2$, confirming a conjecture of Gy\'arf\'as and S\'ark\"ozy from 2014 for large $n$.
\end{abstract}


\section{Introduction}\label{sec:intro}
Edge-colourings provide a natural way to encode additional structure in graphs and arise in a wide range of applications. A key property often sought in a subgraph of a coloured graph is that it uses each colour at most once. Such subgraphs are called \textit{rainbow}. To find large rainbow subgraphs, the host graph must use many different colours. A natural way to guarantee this is to consider a \textit{proper} edge-colouring, that is, one in which no two edges of the same colour are incident to a common vertex. Henceforth, all proper colourings refer to proper edge-colourings.

Rainbow subgraph problems in properly coloured graphs can be used to reformulate a vast number of problems in design theory, graph decompositions, additive combinatorics, discrete geometry, and coding theory. These include the Ryser--Brualdi--Stein conjecture on transversals in Latin squares~\cite{Ryser,montgomery2023proof}, Ringel's conjecture on decomposing complete graphs into trees~\cite{ringel1963theory,montgomery2021proof}, the graceful labelling conjecture~\cite{rosa1966certain,gallian2022dynamic,letzter2025gracesize}, Graham's rearrangement conjecture~\cite{graham1971sums, bucic2025towards}, Jamison's direction paths conjecture~\cite{Jamison1987DirectionTrees, KleitmanPinchasi2005Caterpillar}, and several questions concerning locally correctable and locally decodable codes~\cite{alon2025essentially, alrabiah2024near}. We refer the reader to a survey by Sudakov~\cite{sudakov2024restricted} for further applications of restricted subgraph problems in coding theory (see also \cite{hsieh2025small}), and to recent surveys by Pokrovskiy~\cite{pokrovskiy2022rainbow} and Montgomery~\cite{Mysurvey} for an overview of rainbow subgraph questions in extremal combinatorics.

The topic of this paper is one of the most natural extremal questions in this area: how long a rainbow path/cycle can we find in any properly coloured $n$-vertex complete graph $K_n$? When $n$ is even, $K_n$ can be properly coloured with $n-1$ colours, so a rainbow Hamilton cycle need not always exist. In 1980, Hahn~\cite{hahn1980jeu} conjectured that any properly coloured $K_n$ contains a rainbow Hamilton path. However, a few years later, this was disproved for infinitely many values of $n$ by a now well-known construction of Maamoun and Meyniel~\cite{maamoun1984problem} given by colouring each edge within the vertex set $\mathbb{F}_2^k$ with the sum of its endpoints. In 1989, Andersen~\cite{andersen1989hamilton} conjectured that Hahn's conjecture is not far wrong, as follows.

\begin{conjecture}\label{conj:andersen}
    Any properly coloured $n$-vertex complete graph contains a rainbow path with $n-1$ vertices.
\end{conjecture}

Progress towards Conjecture~\ref{conj:andersen} has been tightly linked to progress on the corresponding problem for long rainbow cycles. In 2007, Akbari, Etesami, Mahini, and Mahmoody~\cite{akbari2007rainbow} showed that every properly coloured $K_n$ has a rainbow cycle with length $n/2-1$. After a series of improvements (see~\cite{gyarfas2014rainbow,gyarfas2011long,gebauer2012rainbow,chen2015long}), Alon, Pokrovskiy, and Sudakov~\cite{alon2017random} gave the first approximate version of Andersen's conjecture in 2017. Indeed, they showed that any properly coloured $K_n$ contains a rainbow path (and cycle) of length $n-O(n^{3/4}\log n)$.
Shortly after, Balogh and Molla~\cite{balogh2019long} observed that these methods could be pushed to improve this to $n-O(n^{1/2}\log n)$.

The difficulty of proving even an approximate form of Andersen's conjecture is somewhat surprising since the results on the corresponding problem for rainbow matchings in properly coloured complete balanced bipartite graphs have long been much stronger. This corresponding problem is the famous Ryser--Brualdi--Stein conjecture on transversals in Latin squares, with origins from 1967~\cite{Ryser}. In its equivalent form in edge-coloured graphs, it states that any complete bipartite graph with $n$ vertices in each class (the graph $K_{n,n}$) which is properly coloured with $n$ colours contains a perfect rainbow matching (i.e., $n$ vertex-disjoint edges, each of a different colour) if $n$ is odd and an $(n-1)$-edge rainbow matching if $n$ is even.
An approximate form of the Ryser--Brualdi--Stein conjecture has been known since 1978 due to independent work by Brouwer, De Vries, and Wieringa~\cite{brouwer1978lower} and Woolbright~\cite{woolbright78}, who showed in this setting that a rainbow matching with $n-\sqrt{n}$ edges always exists. Moreover, though the proof contained an error later corrected by Hatami and Shor~\cite{hatami2008lower}, Shor~\cite{shor} proved in 1982 that a rainbow matching with $n-O(\log^2 n)$ edges always exists. More recently, Keevash, Pokrovskiy, Sudakov, and Yepremyan \cite{KPSY} showed that any proper colouring of $K_{n,n}$ has a rainbow matching with $n-O(\log n/\log\log n)$ edges, and later Montgomery~\cite{montgomery2023proof} showed that when $n$ is large such a matching can have $n-1$ edges.

No such result has been adapted to find rainbow paths of comparable size (as also discussed in the surveys \cite{pokrovskiy2022rainbow, Mysurvey}). In this paper, we introduce the first methods to do so, and thereby resolve Andersen's conjecture for large $n$ in the following stronger form.
\begin{theorem}\label{thm:mainintro} There exists $n_0$ such that any properly coloured complete graph on $n\geq n_0$ vertices contains a rainbow path on $n-1$ vertices. Furthermore, if the colouring has at least $n$ colours, then there exists a rainbow Hamilton path.
\end{theorem}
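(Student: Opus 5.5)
We would prove Theorem~\ref{thm:mainintro} by absorption, following the strategy that resolved the Ryser--Brualdi--Stein problem for large $n$ in~\cite{montgomery2023proof}, adapted to paths. There are two colour regimes, separated by a small constant $\eps>0$. If more than $(1+\eps)n$ colours are used, we have plenty of spare colours. If at most $(1+\eps)n$ colours are used, the colouring is close to a $1$-factorisation, so almost every colour class is a near-perfect matching, and the colouring can be studied through its approximate ``Latin square'' structure. We expect both regimes to go through the same template, with different randomness arguments.

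\textbf{Step 1 (reservation).} We choose small random sets: a vertex set $V_0$ and a colour set $C_0$, each of size $\Theta(\eps' n)$ for some $\eps'\ll\eps$. Concentration then guarantees that every vertex outside $V_0$ has many neighbours in $V_0$ via edges with colours in $C_0$, and that every colour outside $C_0$ appears on many edges inside and around $V_0$.

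\textbf{Step 2 (absorbing structure).} Inside $V_0\cup C_0$ we build a rainbow \emph{path absorber}. This is a rainbow path $P_{\mathrm{abs}}$ together with a family of switchers with the following property: for any small set of leftover vertices $X$ and leftover colours $D$ satisfying $|D|=|X|+\text{const}$, the path can be rerouted into a rainbow path that absorbs exactly $X$ and uses exactly the colours in $D$, plus a fixed set of colours. We would build it from gadgets of two kinds. Vertex switchers are short paths $a x b$ versus $a b$ through a vertex $x$. Colour switchers are pairs of rainbow $a$--$b$ paths whose colour sets differ in exactly one colour. These gadgets are linked by a sparse bipartite ``robustly matchable'' template, as in Montgomery's method. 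Finding the colour switchers uses properness: given a colour $c$, we need many short $c$-swapping configurations, and these come from counting $4$-cycles and $6$-cycles with prescribed colours.

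\textbf{Step 3 (almost cover, then finishing).} Outside the reserved sets, we cover all but $o(n)$ of the vertices by a rainbow path. One way is a random greedy or nibble argument that builds a near-spanning rainbow linear forest and then connects its pieces through $V_0$, in the spirit of the approximate results of~\cite{alon2017random}. We then attach the path to $P_{\mathrm{abs}}$. Next, we absorb the leftover vertices and colours, first using a cover-down step so that the leftover is small and well-distributed, and then using the switchers. In the first regime there are $n$ or more colours, which is enough for a Hamilton path with $n-1$ edges. In general we can only guarantee at least $n-1$ colours, so we deliberately sacrifice one vertex and obtain a path on $n-1$ vertices with $n-2$ edges. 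This sacrifice gives the slack needed to balance vertex and colour counts. It is also forced by examples like Maamoun--Meyniel.

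\textbf{Main obstacle.} We expect the main obstacle to be the colour-absorption step in the near-$1$-factorisation regime. Here colours are scarce, so leftover colours have to be absorbed exactly. Colour switchers are not guaranteed locally: in highly structured colourings such as $\mathbb{F}_2^k$, short colour-swapping cycles may be forced to have rigid structure. The first thing we would try is to absorb colours not through single switchers but through longer ``colour-swapping chains''. The additive-combinatorial obstruction would then be handled by a dichotomy. If the colouring is far from abelian-group-like, random switchers exist in abundance. If it is close to a group colouring, we would exploit the group structure explicitly to write down the needed paths, using the one sacrificed vertex to fix the parity (sum) obstruction.
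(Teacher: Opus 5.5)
\textbf{There is a genuine gap: Step 2 carries the whole difficulty and is asserted rather than proved.} You postulate an absorber that, for any small leftover $X$ and $D$ with $|D|=|X|+\mathrm{const}$, reroutes $P_{\mathrm{abs}}$ into a rainbow path absorbing exactly $X$ and using exactly the colours of $D$ plus a fixed set. When colours are tight, this is the heart of the problem, and nothing in the proposal shows how your gadgets produce it.

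A vertex switcher $axb$ versus $ab$ trades $c(ab)$ for $c(ax)$ and $c(xb)$, colours dictated by the colouring. So every vertex absorption perturbs the colour set uncontrollably. You give no mechanism by which the template makes the net colour change exactly $D$ while all gadgets stay threaded on a single path.

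Even for matchings, where connectivity plays no role, this coupling is what the full list of switching properties in Montgomery's Ryser--Brualdi--Stein proof, organised around a fixed ``identity colour'' $c_0$, is for. For paths, even one rerouting gadget is delicate: no proper colouring of $C_4$ is a coloured comparator, and the paper needs Janzer's theorem (Theorem~\ref{Janzer_thm}) to build one.

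Your ``Main obstacle'' paragraph then concedes that colour switchers may be unavailable. It substitutes a group-like versus non-group-like dichotomy with no supporting stability result, so the argument is missing exactly at its decisive step. (No such dichotomy is needed in Montgomery's bipartite setting, where the switching gadgets exist in every nearly complete proper colouring, group-based ones included.)

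A smaller error is in Step 1. The claim that every colour appears on many edges around $V_0$ fails for colours with $O(1)$ edges. Such colours can be unavoidable even with far more than $(1+\varepsilon)n$ colours: recolour $j$ classes of a $1$-factorisation with distinct new colours, and any rainbow Hamilton path must use at least $j$ of these singleton colours. So ``many colours'' does not mean ``spare colours''.

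\textbf{The missing idea is to avoid path absorption altogether.}
\begin{itemize}
\item \emph{Reduction to a typical digraph.} The paper disposes of colourings with at most $(1-\varepsilon)n$ large colour classes via Theorem~\ref{thm:MPS}. Otherwise, a short rainbow path covers every vertex meeting many rare-coloured edges and uses up the rare colours that are needed (Lemma~\ref{lem:rare-cols}). This path is replaced by a dummy edge, leaving a typical digraph.
\item \emph{Exact colour counts via bipartite matchings.} In the typical digraph (Theorem~\ref{thm:furthermore}), vertices and colours are split at random into $O(1)$ layers. Near-perfect rainbow matchings between consecutive layers come from Montgomery's results used as black boxes (Theorem~\ref{thm:BPW}). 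So every exact colour count, including the one unit of slack when only $n-1$ colours exist, is met inside a bipartite matching problem where absorption has already been done.
\item \emph{Connectivity via a Hamilton router.} The router has linear order and is built from $O(n)$ rainbow comparators. It closes the resulting paths into one rainbow cycle whatever pairing of endpoints they induce, and its colour set does not depend on the routing. Because it needs only $O(n)$ comparators (a sorting network would need $\Omega(n\log n)$), the layers keep linear size, so the black box applies with constant sampling probability.
\item \emph{Finishing.} A nibble step (Theorem~\ref{thm:RSBcoveringstephyper}) absorbs the disruption caused by setting the router aside, and deleting the dummy edges yields the rainbow path on $n$ or $n-1$ vertices.
\end{itemize}
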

Observe that if $n$ is odd, then any proper colouring of $K_n$ uses at least $n$ colours and thus must contain a rainbow Hamilton path by the above theorem (if $n$ is large). Note that this shows that Hahn's 1980 conjecture~\cite{hahn1980jeu} holds for large odd values of $n$ (despite being false in general). Furthermore, it follows immediately from the above theorem that every properly coloured complete graph $K_n$ has a Hamilton cycle with at least $n-2$ distinct colours when $n$ is large, confirming a conjecture of Akbari, Etesami, Mahini, and Mahmoody~\cite{akbari2007rainbow} for large $n$.
\par We discuss another application in which the colouring encodes geometric constraints. Jamison~\cite{Jamison1987DirectionTrees} conjectured in 1987 that, for any finite \textit{cap}, that is, a set of points $S\subseteq \mathbb{R}^2$ such that no three points are collinear, the points of $S$ can be ordered as $s_1,\ldots,s_n$ so that the lines $s_is_{i+1}$ have distinct directions; equivalently, no two lines of the form $s_is_{i+1}$ are parallel. We may construct an auxiliary properly coloured complete graph on vertex set $S$ by colouring each edge $\{s,s'\}$ by the slope of the line $ss'$. This colouring is proper since $S$ is a cap. Observe that Jamison's conjecture is equivalent to the existence of a rainbow Hamilton path in this auxiliary graph. Jamison's conjecture was settled in 2005 by Kleitman and Pinchasi~\cite{KleitmanPinchasi2005Caterpillar}.

\par Theorem~\ref{thm:mainintro} gives a different proof of Jamison's conjecture for sufficiently large point sets, since elementary arguments show that a cap with $n$ points in $\mathbb{R}^2$ determines at least $n$ distinct directions, meaning that the auxiliary graph has at least $n$ distinct colours~\cite{Jamison1986FewSlopesWithoutCollinearity}. The proof of Theorem~\ref{thm:mainintro} is significantly more complicated than the proof of Jamison's conjecture given in~\cite{KleitmanPinchasi2005Caterpillar}; however, Theorem~\ref{thm:mainintro} is also significantly more general. Indeed, Theorem~\ref{thm:mainintro} requires no geometric input beyond a lower bound on the number of directions determined by a cap. 

Our methods also apply to find long rainbow cycles, see Section~\ref{sec:andersen} for more details.

\begin{theorem}\label{thm:mainintrocycle} There exist $n_0$ and $C$ such that any properly coloured complete graph on $n\geq n_0$ vertices contains a rainbow cycle with at least $n-C$ vertices.
\end{theorem}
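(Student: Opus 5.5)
We deduce Theorem~\ref{thm:mainintrocycle} from the machinery behind Theorem~\ref{thm:mainintro}. We do not attempt the cycle from scratch: we find a long rainbow path whose two endpoints can be joined using a colour not already on the path.

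The cheapest route starts from a rainbow path $P=v_1v_2\cdots v_m$ with $m\ge n-1$, given by Theorem~\ref{thm:mainintro}. The closing edge $v_1v_m$ may repeat a colour of $P$, or its colour may not be new. To repair this, we look for a short modification near the ends of $P$. Fix a constant $C$ and consider the initial segment $v_1\cdots v_{C}$ and the final segment $v_{m-C}\cdots v_m$. For pairs $1\le i,j\le C$, consider the cycle $v_i v_{i+1}\cdots v_{m-j+1} v_i$. It is rainbow exactly when the colour $c(v_iv_{m-j+1})$ is not used on the retained subpath. The discarded edges contain at least $i+j-2$ colours. Since the colouring is proper, the $C^2$ chords $v_iv_{m-j+1}$ take at least $C$ distinct colours. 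We would need one of these colours to lie among the discarded colours or outside $P$ altogether.

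This naive counting can fail adversarially: all chord colours might sit in the middle of $P$. So we would instead rerun the absorption framework of the main proof with a modified target. As in the proof of Theorem~\ref{thm:mainintro}, we would build an absorbing structure and a reservoir of flexible colours and vertices. We then reserve a bounded number $C$ of special colours $D$, chosen randomly, and a bounded number of vertices. The absorber yields a rainbow path $P'$ covering all but $O(1)$ vertices and avoiding the colours in $D$. Its endpoints are chosen from a designated set $X$ of linear size. Within the reserved structure we then find a short rainbow connection from one endpoint back to the other using only colours of $D$. This gives a rainbow cycle missing at most $C$ vertices.

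The concrete steps, in order, are as follows.
\begin{enumerate}
\item Randomly set aside a small colour set $D$ and a vertex set $X$. Check with Chernoff-type bounds that for typical endpoints $x,y\in X$ there are many paths $x\,z\,y$ with $c(xz),c(zy)\in D$ distinct and $z$ outside the main path.
\item Apply the absorber-based path construction from the proof of Theorem~\ref{thm:mainintro} to $K_n$ with the colours of $D$ deleted. This gives a rainbow path on all but $O(1)$ vertices whose endpoints lie in $X$.
\item Close the path up through a length-two connector.
\end{enumerate}

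The main obstacle is step~2. The endpoint-flexibility needed there is that the final path can be forced to have both endpoints in a prescribed set, while avoiding the colours of $D$ and leaving the connector vertices unused. We would try to obtain this by fixing the endpoint segments before absorption: treat them as a pre-built rainbow path whose ends are glued into the absorber. The cost is that $C$ leftover colours or vertices cannot be absorbed, and this loss is what produces the additive constant $C$.
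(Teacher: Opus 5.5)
There is a genuine gap, and it is in Step~2, which you yourself flag as the main obstacle. Step~2 needs a rainbow path on $n-O(1)$ vertices that avoids a prescribed colour set $D$ and a prescribed bounded vertex set, with both endpoints in a prescribed set. You attribute this to an ``absorber-based path construction from the proof of Theorem~\ref{thm:mainintro}''. No such construction exists in the paper: absorption happens only inside the rainbow-matching results imported as a black box in Theorem~\ref{thm:BPW}. Nothing in your proposal produces this path.

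Step~1 is also false as stated when $|D|$ is bounded. Because the colouring is proper, a vertex $x$ has at most $|D|$ incident edges with colours in $D$. Hence at most $|D|^2$ vertices $y$ can be reached from $x$ by a path $xzy$ with both colours in $D$. On top of that, $z$ must be one of the $O(1)$ vertices left off the path. So a typical pair $x,y\in X$ has no connector at all, and the endpoints of your long path would have to be fixed essentially exactly in advance. Enlarging $D$ (to roughly $\sqrt{n}$ or more colours) so that typical pairs have connectors throws away unboundedly many colours, unless the unused ones can be fed back into the path. That is again the whole problem. So your argument reduces the theorem to finding a near-spanning rainbow path with prescribed ends and forbidden colours, which is at least as hard as the theorem itself. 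You also never handle colourings that are far from near-optimal.

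The paper avoids endpoint control entirely, and its dependency runs the other way from yours: it builds cycles directly and obtains paths from cycles by deleting dummy edges. The argument goes as follows.
\begin{enumerate}
\item In Theorem~\ref{thm:furthermore}(3) it works inside a set $V'$ of $\min\{n,m-100\}$ vertices. This is the source of the additive constant, because \textbf{SPC} needs $100$ spare colours.
\item It randomly partitions the vertices and colours.
\item It embeds a coloured Hamilton router $S$ in a reserved random set. The router is built from comparators found via Janzer's theorem and joined by the connecting lemma.
\item It uses the nibble (Theorem~\ref{thm:RSBcoveringstephyper}) and then \textbf{SPC} to obtain perfect rainbow matchings between consecutive parts.
\item The router $S$ closes the resulting linear forest into a single rainbow cycle, whatever bijection between endpoints the matchings happen to induce.
\end{enumerate}

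For a general colouring, Theorem~\ref{thm:MPS} gives a rainbow Hamilton cycle unless the colouring is $\eps$-near-optimal. In the near-optimal case, Lemma~\ref{lem:rare-cols} finds a short rainbow $xy$-path $P$ that covers the atypical vertices and uses rare colours, leaving a typical remainder $H$. The ``furthermore'' part of Theorem~\ref{thm:furthermore} then gives a rainbow cycle through a dummy edge $xy$ in $H$; replacing the dummy edge by $P$ yields a rainbow cycle missing at most $101$ vertices. The Hamilton router is exactly the ingredient that replaces your connector step.
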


\subsection{The Gy\'arf\'as--S\'ark\"ozy conjecture on cycle-free transversals}
We also prove a digraph generalisation of Andersen's conjecture that links the study of rainbow paths and transversals in Latin squares. To make this connection, we first introduce some terminology. A complete digraph on $n$ vertices is a digraph with vertex set $[n]$ and edges $\vec{ij}$ for each $(i,j)\in[n]^2$ with $i\neq j$. We denote by $\overleftrightarrow{K_n}$ the digraph obtained by starting with the complete digraph on $[n]$ and adding the loops $(i,i)$ for each $i\in [n]$. A proper colouring\Footnote{A digraph $\vec{G}$ is properly coloured if there are no two in-edges of any vertex with the same colour and no two out-edges of any vertex with the same colour.} of the edges of a digraph is \emph{optimal} if each colour-class forms a $1$-factor\Footnote{Equivalently, a disjoint union of directed cycles (loops and cycles of length $2$ count as cycles), or a spanning subgraph with in-degree and out-degree $1$ at each vertex.}. Optimal proper colourings of $\overleftrightarrow{K_n}$ and Latin squares\Footnote{A Latin square is an $n\times n$ array filled with $n$ symbols so that no symbol repeats in a row or a column.} are in one-to-one correspondence, as the symbol $c$ in the $i$th row and the $j$th column can be thought of as the arc $(i,j)$ having colour $c$. A \textit{transversal} of a Latin square is a selection of entries not sharing a row, column, or a symbol. In digraph language, a transversal corresponds to a rainbow subgraph where both the in-degree and the out-degree of each vertex is at most one. We refer the reader to \cite{pokrovskiy2022rainbow} for a more detailed discussion.
\par In 2014, Gy\'arf\'as and S\'ark\"ozy~\cite{gyarfas2014rainbow} conjectured that any Latin square contains a `cycle-free' transversal of size $n-2$. In graph language, the conjecture can be equivalently stated as asserting that any $\overleftrightarrow{K_n}$ with an optimal colouring contains a rainbow subgraph with $n-2$ edges which is a union of (at most two) directed paths. This conjecture is known to hold for almost all optimal colourings due to Gould and Kelly~\cite{gould2023hamilton}, and approximately in general due to Benzing, Pokrovskiy, and Sudakov~\cite{benzing2020long} with polynomial error terms.
\par We confirm the Gy\'arf\'as--S\'ark\"ozy conjecture for large $n$ in the following stronger form.
\begin{theorem}\label{thm:GS-intro}
    There exists $n_0$ such that any optimally coloured $\overleftrightarrow{K_n}$ on $n\geq n_0$ vertices contains a rainbow directed path on $n-1$ vertices.
\end{theorem}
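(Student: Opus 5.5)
We prove Theorem~\ref{thm:GS-intro} by the absorption-based strategy behind Theorem~\ref{thm:mainintro}, adapted to digraphs. In an optimally coloured $\overleftrightarrow{K_n}$ there are exactly $n$ colours, each a $1$-factor. Loops are useless for paths, so at most $n-1$ colours can be used. Deleting the loops removes at most one edge of each colour at each vertex, so the digraph is still essentially ``complete and properly coloured''. The target is a rainbow directed path with $n-1$ vertices, i.e.\ $n-2$ edges, so we may leave out one vertex and two colours. This slack is essential, and it plays the same role as the leftover vertex in Theorem~\ref{thm:mainintro}.

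The plan has four steps.

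\textbf{(1) Random reservoir.} Take random sets $V_0\subseteq V$ and $C_0\subseteq C$ of size about $pn$, chosen so that every vertex has many out- and in-neighbours into $V_0$ via colours in $C_0$. We also reserve a small random family of ``switcher'' gadgets: short rainbow directed paths whose interior vertices or colours can be exchanged.

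\textbf{(2) Absorber.} Build a vertex-and-colour absorbing structure $A$. This is a rainbow directed path (or a few paths) with the following property. For any small set $X$ of leftover vertices and any set $D$ of leftover colours with $|D|=|X|+1$ (or $|X|+2$ when the final path omits one vertex), the structure can be rerouted into a rainbow directed path covering $V(A)\cup X$ and using colours $C(A)\cup D$, up to one unused colour. We would follow the distributive-absorption template of the rainbow matching proof of the Ryser--Brualdi--Stein result for large $n$. A bipartite template graph with robust matching properties links small switchers, each of which can absorb a single vertex together with a colour. Directedness only helps here, because orientation choices give extra flexibility.

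\textbf{(3) Cover nearly everything.} Outside the absorber, cover all but $o(n)$ vertices and colours by a rainbow directed path. We do this with a nibble/random greedy argument that produces few long rainbow paths, then connect them through $V_0$ using colours from $C_0$ via short rainbow connecting paths. This connectivity follows from the typicality of the random reservoir: between any two vertices there are many rainbow paths of length $3$ through $V_0$ that avoid any small forbidden set.

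\textbf{(4) Finish.} Put the leftover vertices into short connecting paths and feed the leftover colours into the absorber. Since the total numbers of vertices and colours are fixed, what remains is a balanced (up to the allowed slack) pair $(X,D)$, which the absorber swallows. The result is a rainbow directed path on $n-1$ vertices.

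The main obstacle is step (2), and specifically absorbing \emph{colours} while keeping a single path. A leftover colour class is a $1$-factor, and an arbitrary leftover colour must be insertable without creating cycles. For colour switchers I would first try gadgets of the form: two internally disjoint rainbow paths between the same endpoints, $P_1$ and $P_2$, with $C(P_2)=C(P_1)\cup\{c\}\setminus\{c'\}$. These are built using that every colour has an edge at every vertex, and found in abundance by counting arguments in the random reservoir. Chaining such switchers along a template lets us realise any prescribed leftover colour set.

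A second subtlety is that parity or global obstructions (as in the $\mathbb F_2^k$ example) must be avoided. The omission of one vertex gives exactly the freedom needed, but this must be threaded through the balancing in step (4).
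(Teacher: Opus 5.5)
There is a genuine gap, and it is in step (2): the absorber is the whole problem, and you assert it rather than construct it. Worse, the one concrete gadget you propose cannot exist in some of the most natural instances.

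Take an abelian group of order $n$ (e.g.\ $\mathbb{F}_2^k$) as vertex set and colour the arc $(x,y)$ of $\overleftrightarrow{K_n}$ by $y-x$. This is an optimal colouring: each colour class is a translation, and the identity colour lies entirely on the diagonal. Along any directed path from $x$ to $y$ the colours telescope to $y-x$. So any two directed paths with the same endpoints have the same colour sum, whatever their interior vertices. Hence:
\begin{itemize}
\item your switcher with $C(P_2)=C(P_1)\cup\{c\}\setminus\{c'\}$ forces $c=c'$;
\item a gadget absorbing one vertex together with one new colour $d$ between fixed endpoints forces $d=0$, which does not occur off the diagonal;
\item more generally, once the absorber is attached to the rest of the path at fixed vertices, rerouting it can only absorb colour sets satisfying a prescribed sum condition, never an arbitrary leftover set $D$ as step (2) demands.
\end{itemize}
So the ``global obstruction'' you mention at the end is not a detail to be threaded through step (4). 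It rules out the template itself, and your proposal contains no mechanism that gets around it.

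There is also a gap before any absorption starts. Deleting loops costs each vertex only one in-arc and one out-arc, but a single colour class can lose almost everything: a colour can have $n-2$ loops and just two non-loop arcs. So ``every colour has an edge at every vertex'' is false, and typicality-based counting fails for such colours.

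The paper handles this first. Let $\mathcal{D}$ be the set of colours with at least $\eps n$ loops. If $|\mathcal{D}|\le 1$, the loopless digraph (without the colour in $\mathcal{D}$) is $(n,\eps)$-typical with at least $n-1$ colours. Otherwise, the paper builds a short rainbow path using every colour of $\mathcal{D}$ (a greedy exactly-$\mathcal{D}$-rainbow matching joined up by Lemma~\ref{Connecting_lemma}), deletes it, and reattaches it through a dummy edge via the `furthermore' clause of Theorem~\ref{thm:furthermore}.

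The typical case is then handled with no path absorber at all:
\begin{itemize}
\item Exactness comes from rainbow \emph{matching} theorems (Theorem~\ref{thm:BPW}, imported from the Ryser--Brualdi--Stein proof, where no connectivity is needed), applied between $O(1)$ random vertex layers. Its \textbf{BPW} part is what allows using exactly $n-1$ colours with one vertex left out.
\item The resulting rainbow linear forest is closed into one cycle by a Hamilton router assembled from comparators (Lemma~\ref{lem:comparators exist}, via Janzer's theorem).
\item The damage the router does to randomness is repaired by the nibble (Theorem~\ref{thm:RSBcoveringstephyper}).
\item In the tight case, the path on $n-1$ vertices appears after deleting two dummy edges at a common vertex.
\end{itemize}
This routing idea, which replaces absorption for paths by absorption for matchings, is what your plan is missing.
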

\par As the loops cannot be part of any directed path, the above theorem equivalently asserts that the complete digraph obtained upon removing the loops from $\overleftrightarrow{K_n}$ contains a rainbow directed path on $n-1$ vertices. We refer the reader to Theorem~\ref{thm:furthermore} for more general results concerning directed analogues of Andersen's conjecture.

\subsection{Proof methods}
As discussed earlier, rainbow matching problems are much better understood than rainbow cycle problems. In this paper, we address this gap by introducing methods that convert rainbow matchings into rainbow cycles in properly coloured graphs. To `upgrade' theorems about finding large rainbow matchings into theorems about finding large rainbow cycles, we apply such results within random subgraphs of any properly coloured complete graph $K_n$, and then merge such matchings to get a collection of rainbow paths with $n-o(n)$ edges. To connect these paths together into a long rainbow cycle, we use novel methods inspired by \textit{sorting networks}.
\par Sorting networks are a fundamental tool in computer science that give a way to sort an unordered list of $n$ integers by a sequence of comparisons and transpositions that are \textit{fixed in advance} (for an overview, see~\cite{Knuth1973}). A celebrated construction of Ajtai, Koml\'os, and Szemer\'edi~\cite{ajtai19830} establishes the existence of sorting networks which make at most $O(n\log n)$ comparisons, which is best possible due to information-theoretic constraints. The existence of efficient sorting networks has surprising applications in graph theory. The earliest such applications that we are aware of appear in works of H{\"a}ggkvist and Thomason~\cite{HaggkvistThomason1995,HaggkvistThomason1997}. More recently, sorting networks have been used to tackle several graph labelling problems~\cite{muyesser2022random} and to facilitate the construction of arbitrary bounded-degree spanning trees and Hamilton cycles in expanders~\cite{hyde2025spanning, draganic2024hamiltonicity}.      

A key novelty in our work is the construction, within an arbitrary proper colouring of a complete graph, of a structure with properties related to those of sorting networks. This structure is a novel relaxation of a sorting network, which we call a \textit{Hamilton router} (see Section~\ref{sec:overview} for an overview and see Section~\ref{sec:routers} for the definition of a Hamilton router). Hamilton routers, although unable to fully sort a given list of $n$ integers, are able to produce some ordering $(k_i)_{i\in [n]}$ so that the permutation $i\to k_i$ consists of a single cycle (i.e., has cycle type $n$). The benefit of this relaxation is that we can construct such routers which make at most $O(n)$ comparisons. This critically eases the demands on the results we use on rainbow matchings in subgraphs of properly coloured complete graphs. Moreover, we show that this relaxed property of Hamilton routers is all that is required for graph-theoretic applications for problems related to Hamilton cycles\Footnote{In particular, Hamilton routers could replace sorting networks in prior applications such as \cite{muyesser2022random, draganic2024hamiltonicity}, although for applications concerning spanning trees or more complicated cycle structures (such as those in \cite{muyesser2026cycle, muyesser2025graham, hyde2025spanning}), Hamilton routers appear to offer no additional leverage.}.

To find the large rainbow matchings required for the proof of Theorem~\ref{thm:mainintro}, we use the recent breakthrough results on the Ryser--Brualdi--Stein conjecture by Montgomery~\cite{montgomery2023proof} (more specifically, we use a technical theorem from this work as a `black box'). We note, however, that previous results from \cite{KPSY} in combination with our methods would already show, in the setting of Theorem~\ref{thm:mainintro}, the existence  of rainbow cycles of length $n-O(\log n)$, already a significant improvement over the results of Alon, Pokrovskiy, and Sudakov~\cite{alon2017random} and Balogh and Molla~\cite{balogh2019long}.

\vspace{2mm}
\smallskip

\noindent \textbf{Organisation of the rest of the paper.} In the next section, we introduce some notation, give a more detailed proof overview, and make explicit the results we require from \cite{montgomery2023proof}. Section~\ref{sec:routers} is devoted to constructing Hamilton routers and finding them in proper colourings. Section~\ref{sec:puttingtogether} proves Theorem~\ref{thm:furthermore}, which is a version of our main theorem under a typicality hypothesis. Section~\ref{sec:andersen} gives a reduction from the conjectures of Andersen and Gy\'arf\'as--S\'ark\"ozy to Theorem~\ref{thm:furthermore}, thereby completing the proofs of our main theorems modulo the proof of Theorem~\ref{thm:BPW}, which is then given in Appendix~\ref{appendix:A}.

\section{Preliminaries and proof overview}
In this section, after covering our notation in Section~\ref{sec:notation}, we give an overview of our methods in Section~\ref{sec:overview}. In Section~\ref{sec:imports}, we reintroduce and discuss the results we need from \cite{montgomery2023proof}. In Sections \ref{sec:typical} and \ref{sec:nibble}, we then reintroduce typical hypergraphs and results  on matchings in them.


\subsection{Notation}\label{sec:notation}
We use $[n]$ to denote $\{1,2,\ldots, n\}$. A graph $G$ has vertex set $V(G)$ and edge set $E(G)$, and $|G|=|V(G)|$ and $e(G)=|E(G)|$. If $G$ has an edge colouring, then $C(G)$ is the set of colours appearing on the edges of $G$, and, for each $c\in C(G)$, $E_c(G)$ is the set of edges of $G$ with colour $c$. The colour of an edge $e\in E(G)$ is $c(e)$ and its vertex set is $V(e)$. Given $A\subseteq V(G)$, $G[A]$ is the induced subgraph of $G$ with vertex set $A$ and edge set $\{uv\in E(G):u,v\in A\}$. Given, further, $B\subseteq V(G)$, $G[A,B]$ is the graph with vertex set $A\cup B$ and edge set $\{uv\in E(G):u\in A,v\in B\}$ and $e_G(A,B)=|\{(u,v):u\in A,v\in B,uv\in E(G)\}|$.

A digraph $D$ has vertex set $V(D)$ and edge set $E(D)$ consisting of edges $xy$ directed from $x$ to $y$. We say $D$ is \emph{properly edge-coloured} if there are no two in-edges of any vertex with the same colour and no two out-edges of any vertex with the same colour. Given an edge-coloured digraph $D$, sets $X,Y\subseteq V(D)$ and $C\subseteq C(D)$, $D[X,Y,C]$ is the undirected coloured bipartite graph consisting of vertex classes $X,Y$, and each edge $xy$ with colour $c$ when $x\in X$, $y\in Y$, $c\in C$ and $xy\in E(D)$.
Given $X\subset V(G)$ and $C\subset C(G)$, $G[X,C]$ denotes the induced subgraph on vertex set $X$ with edges with colours in $C$. Note that every undirected graph $G$ can be viewed as a digraph by directing edges in both directions and we use $G[X,Y,C]$ in the same way for graphs and digraphs. Except in the discussion of complete digraphs in Section~\ref{sec:intro}, all digraphs in this paper are loopless.

Given a ground set $V$ and $p\in [0,1]$, a set $A\subset V$ is \emph{$p$-random} if each element of $V$ is included in $A$ independently at random with probability $p$. Two sets $A,B$ are disjoint \emph{$p$-random} if each element of $V$ is included in $A$ with probability $p$, included in $B$ with probability $p$, and included in neither with probability $1-2p$ (so in particular $A$ and $B$ are always disjoint and we must have $p\leq 1/2$).
\par A \emph{matching} in a graph (or hypergraph) is a set of edges which share no vertices. In a coloured graph, a subgraph is \emph{$C$-rainbow} if it is rainbow with edge colours in $C$, and \emph{exactly-$C$-rainbow} if it has exactly one edge of each colour in $C$, with no other colours appearing.  A \emph{balanced} bipartite graph is one where the two vertex classes have the same size.

We use hierarchies of constants to record dependencies between the constants in our proofs. We write $\alpha \ll \beta$ to mean  that there exists some positive increasing function $f:(0,1]\to \mathbb{R}$ so that the remainder of the proof follows if $\alpha\leq f(\beta)$. We use $\alpha \llpoly \beta$ to mean that there exists some fixed $C>0$ such that the remainder of the proof follows if $\alpha \leq \beta^C/C$. Where there are several constants in the hierarchy, the constants/functions are chosen from right to left. For more details on this notation, see~\cite[Section~3.2]{montgomery2018decompositions}. We also use `big O' notation, where the functions involved are always functions of $n$. Lastly, given $a\in \R$ and $b,c\geq 0$, we say $x=(a\pm c)b$ if $(a-c)b\leq x\leq (a+c)b$.


\subsection{Overview}\label{sec:overview}

One strategy to find Hamilton cycles in an (uncoloured) $n$-vertex graph $G$ with various properties (used, for example in~\cite{FerberKronenbergLong2017,Montgomery2020,hyde2025spanning}) is to first split the vertices of $G$ randomly into sets, say $V_i$, $i\in [k]$, of equal size. If it is likely  that each bipartite induced graph $G[V_i,V_{i+1}]$, $i\in [k-1]$, contains a perfect matching, then together these matchings will form vertex-disjoint paths from $V_1$ to $V_k$ which cover the vertices of $G$. To get a Hamilton cycle, we need to join these paths together into a cycle.

One way to join the paths is to first set aside a subgraph $S$ which is capable of emulating a \emph{sorting network} in $G$ between $V_1$ and $V_k$. This would have the property that $V_1,V_k\subset V(S)$ and, given any bijection $\phi:V_1\to V_k$, there is a set of $v,\phi(v)$-paths, $v\in V_1$, in $S$ which together use every vertex of $S$. If the vertices in $V(S)\setminus (V_1\cup V_k)$ can be split as $V_2\cup\ldots\cup V_{k-1}$ as above and the corresponding matchings, and thus paths, found, then this property of $S$ can be used to join the paths together into a Hamilton cycle in any order by choosing an appropriate bijection $\phi$.
The earliest use of such arguments that we are aware of is due to H{\"a}ggkvist and Thomason~\cite{HaggkvistThomason1995,HaggkvistThomason1997}. Related approaches using sorting networks have recently been used in extremal combinatorics in~\cite{kuhn2014proof,muyesser2022random,hyde2025spanning, muyesser2026cycle}.

In our setting, we want to carry out the above strategy in properly coloured graphs to produce a long rainbow cycle. Modulo various complications that arise, we will be able to find perfect rainbow matchings between randomly chosen vertex sets in our properly coloured complete graph $G$ using results from~\cite{montgomery2023proof} (see Section~\ref{sec:imports}). Moreover, we will be able to restrict this further to random sets of colours so that the paths we create by merging matchings will be collectively rainbow.

In order to use these results as a `black box', it is very desirable to use $k=O(1)$ in the outline above. However, well-known lower bounds for the optimal depth of sorting networks (see, e.g.,~\cite{ajtai19830}) imply that any subgraph $S$ with the properties discussed above satisfies $|S|=\Omega((n/k)\log (n/k))$, so that we would need $k=\Omega(\log n)$. To avoid this, observe that in the sketch above we do not mind the order in which we join the paths together using $S$, only that they are joined into a Hamilton cycle. Thus, we will replace the sorting network emulator by a structure more specifically geared to our task. This structure we call a \emph{Hamilton router}, as defined in Section~\ref{sec:routers}.

There are two remaining points to consider: whether our Hamilton router $S$ can be found in an arbitrary properly coloured complete graph, and whether removing $S$ affects the rest of the argument in which we partition into random sets. Our construction for a Hamilton router will reduce the first question to finding certain `rainbow sorting networks of order 2', which we do via a strong theorem of Janzer~\cite{janzer2023rainbow} (see Theorem~\ref{Janzer_thm}). The second point is a serious concern, as removing the Hamilton router could badly affect how `pseudorandom' our initial graph is.
We address this issue by finding the Hamilton router in a smaller random set, so as to not spoil the typicality of the entire digraph, but only that of a smaller random subset. The leftover in this small random subset (after the router is found) is then dealt with by iterated applications of a version of the semi-random method, see Theorem~\ref{thm:RSBcoveringstephyper} and the proof of Theorem~\ref{thm:furthermore} for more details.

This outline covers the main points behind our proof; as further challenges and technicalities arise, we will explain them and how we overcome them in context throughout the paper.


\subsection{Results for rainbow matchings in bipartite coloured graphs}\label{sec:imports}
\par To make use of the coloured Hamilton router construction introduced informally in the sketch above, we need results guaranteeing that large rainbow matchings are likely to exist between randomly sampled sets of vertices and colours. We import the results we need from \cite{montgomery2023proof}.

Given an edge-coloured graph $G$, sets $X,Y\subseteq V(G)$ and $C\subseteq C(G)$ (the colour-set of $G$), $G[X,Y,C]$ is the coloured bipartite graph consisting of vertex classes $X$ and $Y$, and edges from $X$ to $Y$ with colours in $C$. We can now summarise the results we require from \cite{montgomery2023proof} in the following theorem, before discussing it further. (See Section~\ref{sec:notation} for the formal definitions of $p$-random, disjoint $p$-random, and the meaning of the symbols $\ll$ and $\llpoly$.)

In this theorem, in a common setting, we collect three results \ref{RBS}, \ref{BPW} and \ref{SPC}, which can be considered respectively to be randomised versions of the Ryser--Brualdi--Stein conjecture~\cite{Ryser,Brualdi,Stein}, a conjecture of Best, Pula, and Wanless~\cite{best2021small} that asserts that any proper colouring of $K_{n,n+1}$ has a rainbow matching of size $n$, and a result \cite[Theorem 3.2]{montgomery2023proof} asserting that, when $n$ is large, any proper colouring of $K_{n,n}$ with $100$ `spare' colours contains a rainbow matching of size $n$ (see Theorem~\ref{thm-technical-variant} in Section~\ref{appendix:A} for a precise statement).

\begin{theorem}\label{thm:BPW}Let $1/n\ll p\leq 1$, let $\eps \llpoly\log^{-1}n$, and let $D$ be a properly coloured $(n,\eps)$-typical\Footnote{See Definition~\ref{defn:digraph} for a formal definition.} digraph. Let $X,Y$ be disjoint $p$-random subsets of $V(D)$, and let $C$ be a $p$-random subset of $C(D)$, sampled independently. Then, with high probability, the following hold for any disjoint $X',Y'\subseteq V(D)$ and $C'\subseteq C(D)$ satisfying $|Q\triangle Q'|\leq \eps n$ for each $Q\in \{X,Y,C\}$.

\begin{enumerate}[label = \textup{\textbf{RBS}}]
    \item\label{RBS}  If $|X'|=|Y'|\leq |C'|$, then $D[X',Y',C']$ has a rainbow matching of size $|X'|-1$.\renewcommand{\labelenumi}{\textup{\textbf{BPW}}}
    \item\label{BPW}  If $|X'|=|Y'|+1\leq |C'|$, then $D[X',Y',C']$ has a rainbow matching of size $|X'|-1=|Y'|$.\renewcommand{\labelenumi}{\textup{\textbf{SPC}}}
    \item\label{SPC}  If $|X'|=|Y'|\leq |C'|-100$, then $D[X',Y',C']$ has a rainbow matching of size $|X'|$.
\end{enumerate}
\end{theorem}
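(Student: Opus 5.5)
Theorem~\ref{thm:BPW} is essentially imported from \cite{montgomery2023proof}, so the plan is to derive each of \ref{RBS}, \ref{BPW} and \ref{SPC} from the corresponding technical theorem there, rather than reprove anything from scratch. The key step is to put our digraph setting into the framework of that paper. Given the properly coloured $(n,\eps)$-typical digraph $D$, the bipartite graph $D[X,Y,C]$ with $X,Y$ disjoint $p$-random and $C$ independent $p$-random is a properly coloured bipartite graph. By typicality, it is also whp $(pn,\eps')$-typical in the bipartite sense used in \cite{montgomery2023proof}: each vertex has about $p^2n$ edges with colour in $C$ to the other side, and each colour in $C$ has about $p^2 n$ edges between $X$ and $Y$, with codegree-type conditions inherited similarly. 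I would check this with Chernoff bounds and a union bound over polynomially many events, using $\eps\llpoly\log^{-1}n$ so that the error terms survive. In particular, the bound on $\eps$ is exactly what allows the typicality deviations (of order $\eps n$) to be absorbed by the hypothesis in \cite{montgomery2023proof}, which tolerates $\log^{-O(1)}$ errors.

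Next, I would handle the robustness to changing $X,Y,C$ by up to $\eps n$ elements. The technical theorem in \cite{montgomery2023proof} is stated in a randomised form: it finds its absorbing structure inside random subsets and then tolerates arbitrary small modifications. For \ref{RBS} and \ref{SPC}, I would invoke Theorem~\ref{thm-technical-variant} directly on the sampled triple, which gives the conclusion for all $X',Y',C'$ with $|Q\triangle Q'|\le\eps n$. If the imported statement only covers equal class sizes with $|C'|=|X'|$ (respectively $|C'|=|X'|+100$), I would reduce the case $|C'|$ larger by deleting surplus colours from $C'$. This deletion is allowed as long as the symmetric difference stays $O(\eps n)$, and I would arrange that by rescaling $\eps$ in the hierarchy.

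The \ref{BPW} case, with $|X'|=|Y'|+1$, is where I expect the main obstacle, since the Best--Pula--Wanless variant may not be stated verbatim in \cite{montgomery2023proof}. My first attempt is a reduction to \ref{RBS}. Pick a vertex $x\in X'$, apply \ref{RBS} to $X'\setminus\{x\}, Y', C'$ to get a rainbow matching of size $|Y'|-1$, and then use typicality to perform a switching that enlarges it by one edge. For the switching, the uncovered vertex $y\in Y'$ has about $p^2 n$ neighbours via colours in $C'$, and either $x$ or the vertices of $X'$ matched along alternating paths give enough options to find an augmenting swap with an unused colour. Because this augmentation step is fragile, the cleaner route is to repeat the absorption proof of \cite{montgomery2023proof} with one extra vertex on one side. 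I expect the argument to carry through unchanged, since the deficiency there comes only from the colour/vertex balance, and that is exactly what this case supplies. This is the part I would write out in Appendix~\ref{appendix:A}.
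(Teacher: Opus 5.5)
Importing Theorems~\ref{thm-technical} and~\ref{thm-technical-variant} for \ref{RBS} and \ref{SPC} is the paper's route, but the step that carries the weight is missing. The hypothesis of those theorems is not typicality but proper-pseudorandomness. Besides \ref{prop-pseud-basic-1}--\ref{prop-pseud-basic-2new}, it requires the absorber-gadget conditions \ref{prop-pseud-abs-prime}--\ref{prop-pseud-add-new-2}: many pairs of disjoint rainbow $4$-cycles with matching colour patterns through same-coloured edges, many disjoint $c_0$-switching paths and $8$-cycles, and the colour-absorbing families. These are separate hypotheses, not consequences of the degree/codegree conditions you propose to check on $D[X,Y,C]$, which has relative density $p$. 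The paper obtains them by counting antidirected copies of the gadgets in the nearly complete $D$ and then sampling. Moreover, those theorems are deterministic statements about a single graph, not robust ones. Applying them to $D[X,Y,C]$ tells you nothing about $D[X',Y',C']$. Uniformity over all $\eps n$-perturbations has to come from showing that whp \emph{every} perturbed graph is properly-pseudorandom, for instance by exhibiting many disjoint gadgets, of which a perturbation destroys only $O(\eps n)$. This is Lemma~\ref{lem:pseud is inherited appendix}, and it is most of Appendix~\ref{appendix:A}.

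For \ref{BPW}, augmenting a $(|Y'|-1)$-edge matching when only a bounded number of vertices and (in the worst case) colours are free is exactly the last-edge problem that absorption exists to solve. A switching argument based on typicality will not do it, and the claim that Montgomery's proof runs unchanged with one extra vertex is unsupported. The missing idea is to swap the roles of $Y'$ and the colours. In $D[X',C',Y']$, with classes $X',C'$ and an edge $xc$ of colour $y$ whenever $xy\in E(D)$ has colour $c$, the colouring is still proper. Its rainbow matchings of size $|Y'|$ correspond exactly to those of $D[X',Y',C']$. After discarding surplus colours so that $|C'|=|X'|$, this is a balanced instance with only $|X'|-1$ colours, and you need a rainbow matching that uses \emph{every} colour. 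That does require changing Montgomery's argument. The colour-absorbing property must be strengthened to \ref{prop-alt} (rainbow matchings with $k+|\bar C|$ rather than $k+|\bar C|-1$ edges), which gives Theorem~\ref{thm-strong-technical} under the weaker colour count \ref{prop-alt-basic}. This strong pseudorandomness must then be verified for the flipped random graph, whose gadgets correspond to different configurations in $D$.
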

The above theorem is not stated directly in \cite{montgomery2023proof}, though it follows from its proofs with minor modification. We discuss any required modification, and give formal proofs where necessary, in Appendix~\ref{appendix:A}. For now, let us note only that the main theorems of \cite{montgomery2023proof} are stated under a technical assumption of `proper-pseudorandomness', which holds, for example, for nearly complete balanced bipartite graphs properly coloured with around $n$ colours (as shown in \cite[Proposition 3.12]{montgomery2023proof}). Proper-pseudorandomness also holds for dense random samples of such nearly-complete graphs (as shown in \cite[Section 10]{montgomery2023proof}). Theorem~\ref{thm:BPW} is a consequence of combining the main theorems of \cite{montgomery2023proof} together with the assertion that proper-pseudorandomness is inherited upon random sampling. Our model for random sampling is slightly different from that used in \cite{montgomery2023proof}, giving rise to most of the proofs in Appendix~\ref{appendix:A}.
\vspace{2mm}



\subsection{Typical hypergraphs and digraphs}\label{sec:typical}
Here we recall some standard definitions, following \cite{KPSY,montgomery2023proof,muyesser2022random}. A correspondence between edge-coloured graphs and $3$-uniform hypergraphs is obtained as follows.

\begin{defn}\label{defn:HG}
Given a coloured bipartite graph $G$ with vertex classes $A$ and $B$, we denote by $\mathcal{H}(G)$ the 3-partite 3-uniform hypergraph with vertex classes $A$, $B$ and $C(G)$, and edge set $\{abc:a\in A,b\in B,c\in C(G),ab\in E_c(G)\}$.
\end{defn}

We  start by defining a typical bipartite graph as follows.
\begin{defn}\label{defn:typical}
A bipartite graph $H$ with vertex classes $A$ and $B$ is \emph{$(n,p,\eps)$-typical} if the following all hold. \textbf{a)} $|A|=(1\pm \eps)n$ and $|B|=(1\pm \eps)n$. \textbf{b)} For each $v\in V(H)$, $d_{H}(v)=(1\pm \eps)pn$. \textbf{c)} For each distinct $u,v\in V(H)$ with $u,v\in A$ or $u,v\in B$, we have $|N_H(u)\cap N_H(v)|=(1\pm \eps)p^2n$.
\end{defn}

If the first two conditions above hold, but not necessarily the third, then we call $H$ \emph{$(n,p,\eps)$-regular}.\vspace{1mm}

We now define typicality for $3$-uniform hypergraphs.

\begin{defn} Given a 3-uniform hypergraph $\mathcal{H}$ and any disjoint sets $X,Y\subset V(\mathcal{H})$, let $\mathcal{H}_{XY}$ be the bipartite graph with vertex classes $X$ and $Y$, with edges $xy$ exactly for those $x\in X$ and $y\in Y$ for which there exists some $z\in V(\mathcal{H})\setminus \{x,y\}$ with $\{x,y,z\}\in E(\mathcal{H})$.
\end{defn}

\begin{defn}\label{def:typicalgraph}
A simple 3-partite 3-uniform hypergraph $\mathcal{H}$ with vertex classes $A$, $B$ and $C$ is \emph{$(n,p,\eps)$-typical} if each of $\mathcal{H}_{AB}$, $\mathcal{H}_{BC}$ and $\mathcal{H}_{AC}$ is $(n,p,\eps)$-typical.
\end{defn}

We also give a digraph analogue of the above definitions.

\begin{defn}[Typical digraphs]\label{defn:digraph}
    We call a properly edge-coloured digraph without loops $(n,\eps)$\textit{-typical} if it has $(1\pm \eps )n$ vertices, each vertex has $(1\pm \eps )n$ in-neighbours and $(1\pm \eps )n$ out-neighbours and each colour appears $(1\pm \eps )n$ times.\label{defn:typical2var}
\end{defn}


\subsection{A nibble-type lemma and Chernoff's bound}\label{sec:nibble}
The R\"odl nibble, also known as the semi-random method, has played a fundamental role in the study of matchings in regular (or almost regular) hypergraphs with low codegrees~\cite{rodlandhisnibble}, and will allow us to find almost-perfect matchings in our typical hypergraphs.
For discussion of the R\"odl nibble we refer the reader to the survey by
Kang, Kelly, K\"uhn, Osthus, and Methuku~\cite{kang2021graph} and \cite[Section~4.7]{alon2004probabilistic}.
We will need a flexible nibble-like statement that, essentially, we can still use after removing some subset of a random vertex set (corresponding to the Hamilton-router $S$ in the sketch in Section~\ref{sec:overview}). Such a result was used in \cite{montgomery2018decompositions}, and then by, for example, \cite{montgomery2023proof, muyesser2022random}. After stating the result we use, we discuss its proof which can be found in Appendix~\ref{appendix:B}. Recall that a hypergraph is simple if every pair of edges intersects on at most one vertex.

\begin{theorem}\label{thm:RSBcoveringstephyper}
Let $1/n, \, \eps \llpoly \eta \llpoly  p,q\leq 1$. Let $2q/3\leq q_A,q_B,q_C\leq q$. Let $\mathcal{H}$ be a simple 3-partite 3-uniform hypergraph which is $(n,p,\eps)$-typical with vertex classes $A$, $B$ and $C$. Let $A'$ be a $q_A$-random subset of $A$, let $B'$ be a $q_B$-random subset of $B$ and let $C'$ be a $q_C$-random subset of $C$, possibly with dependencies. Then, with high probability, the following holds.

Given any sets $\bar{A}\subset A$, $\bar B\subset B$, $\bar{C}\subset C$ of size $qn$ such that $A'\cup B'\cup C'\subset \bar{A}\cup \bar{B}\cup \bar{C}$, there is a matching in $\mathcal{H}[\bar{A},\bar{B},\bar{C}]$ with at least $qn-\eta n$ edges.
\end{theorem}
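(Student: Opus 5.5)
We will derive the statement from the standard nibble for typical hypergraphs applied to a random sub-hypergraph. The key point is that, although $\bar A,\bar B,\bar C$ are chosen adversarially after the random sets are sampled, they must contain $A'\cup B'\cup C'$. Each of $A',B',C'$ has size about $q_An,q_Bn,q_Cn\geq 2qn/3$, while $|\bar A|=|\bar B|=|\bar C|=qn$. So at most a $1/3$-fraction of each of $\bar A,\bar B,\bar C$ lies outside the random part.

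\textbf{Step 1 (typicality of the random part).} We first show that, with high probability, $\mathcal{H}[A',B',C']$ is $(n',p',\eps')$-typical in a suitably weighted sense, where $\eps'$ is polynomially small. By Chernoff's bound and a union bound over the $O(n)$ vertices and $O(n^2)$ pairs, degrees and codegrees in each of the graphs $\mathcal{H}_{A'B'}$ etc.\ are as expected. Here the degree of $a\in A$ is the number of edges $abc$ with $b\in B'$ and $c\in C'$.

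Dependencies between $A',B',C'$ are harmless. Each vertex's degree into the random sets is a sum over its incident edges $abc$ of indicators depending on the memberships of $b$ and $c$. Since $\mathcal{H}$ is simple, each element lies in few such edges, so we can use a bounded-differences (McDiarmid) inequality, or condition on one class at a time.

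\textbf{Step 2 (a fractional/weighted nibble on the adversarial sets).} Fix arbitrary $\bar A,\bar B,\bar C$ as in the statement. Consider the hypergraph $\mathcal{H}[\bar A,\bar B,\bar C]$. It is not necessarily regular, since vertices in $\bar A\setminus A'$ might have few edges into $\bar B\times\bar C$. We therefore construct a fractional matching. Assign weight $w$ to every edge of $\mathcal{H}[A',B',C']$, with $w\approx 1/(p^2 q_B q_C n)$ adjusted class by class so that vertices in $A'$, $B'$, $C'$ each receive total weight close to $1$. Use $\mathcal{H}$'s typicality to place additional weight on edges touching the non-random leftover $\bar A\setminus A'$ etc.

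Concretely, I would set $q_0=\min(q_A,q_B,q_C)$ and pass to subsets to equalise. First choose $A''\subseteq A'$, $B''\subseteq B'$ and $C''\subseteq C'$ by further random thinning to common density $q_0$. Then apply the Pippenger--Spencer / Alon--Kim--Spencer nibble (as in \cite{montgomery2018decompositions}) to the nearly-regular, low-codegree hypergraph induced on $A''\cup B''\cup C''$ together with the remaining vertices of $\bar A\cup\bar B\cup\bar C$.

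The cleanest route is to show that every vertex of $\bar A\cup\bar B\cup\bar C$ has degree $(1\pm\eta^2)d$ in a suitable weighted sense and that codegrees are $o(d)$. For a vertex $v\notin A'$, its degree into $B'\times C'$ is concentrated for \emph{all} $v\in A$ simultaneously by Step 1, independently of the adversary's choice. The adversarial vertices outside the random sets number at most $qn/3$ per class, and their degrees into $\bar B\times\bar C$ are at least their degrees into $B'\times C'$. That gives lower bounds but not regularity.

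To handle irregularity, use the ``random greedy on a fractional matching'' version of the nibble. This says that a hypergraph with a fractional matching of weight $W$ and with every edge weight at most $n^{-\Omega(1)}$ contains a matching of size $W-o(n)$ (Kahn's theorem). The task therefore reduces to exhibiting a fractional perfect-ish matching of weight $qn-\eta n/2$ on $\bar A,\bar B,\bar C$ with small edge weights. We build it by giving each edge $abc$ a weight according to how many of $a,b,c$ lie in the random sets. This is a linear system with constant-size types, solvable because each type has density at least a constant multiple of the $2/3$ guaranteed by $q_A,q_B,q_C\ge 2q/3$. In particular, edges with at least two random endpoints always have the predictable count.

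The main obstacle is exactly this step: uniformity over all adversarial choices, which cannot be union-bounded since there are exponentially many. The plan is to make all needed quantities depend only on degrees into $A',B',C'$, which Step 1 controls for all vertices simultaneously. The type-weighted fractional matching then needs no further randomness, and Kahn's theorem finishes the proof.
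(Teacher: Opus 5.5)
Your route is different from the paper's: a type-weighted fractional matching on the edges of $\mathcal{H}[\bar A,\bar B,\bar C]$ with at least two random vertices, rounded by a Kahn-type theorem. Its linear-algebra part is sound. If the random vertices have common density $\alpha\geq 2/3$ in each of $\bar A,\bar B,\bar C$, give edges with exactly one non-random vertex weight proportional to $\alpha^{-2}$, fully random edges $(3\alpha-2)\alpha^{-3}$, and all other edges $0$. Then every vertex has load $1$ at the level of typical degrees, and $q_A,q_B,q_C\geq 2q/3$ is precisely the nonnegativity condition $3\alpha-2\geq 0$.

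The gap is at the step you yourself flag as the main obstacle. The loads cannot be made to depend only on degrees into $A',B',C'$. A random vertex $a\in A'$ receives weight from the edges $abc$ with $b\in\bar B\setminus B'$ and $c\in C'$, and the number of these, $e_{\mathcal{H}}(a,\bar B\setminus B',C')$, is chosen by the adversary. For a chosen $a$, the adversary can fill $\bar B\setminus B'$ with vertices $b$ forming an edge $abc$ with some $c\in C'$. This pushes the count above its typical value $pq_C|\bar B\setminus B'|$ and the load at $a$ above $1$. Step~1 says nothing about such mixed degrees. So your weighting need not be a fractional matching. Truncating at overloaded vertices costs only $O(\eta n)$ if you know that, for every adversarial choice simultaneously, all but $O(\eta n)$ vertices have typical mixed degrees. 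Your proposal neither proves this nor recognises it as needed; it says the opposite, that no further control is required.

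This uniform discrepancy statement is the missing idea, and it is how the paper handles the exponentially many adversarial choices. Using only the randomness of one set, say $A'$, the bipartite graph on $B\times C$ of pairs whose hyperedge passes through $A'$ is typical with high probability (Lemma~\ref{lem:typicality}). Thomason's edge-distribution lemma (Lemma~\ref{Theorem_Thomassen}) then gives, for every $B^*\subseteq B$ at once, that all but $\eta n$ vertices $c$ satisfy $e_{\mathcal{H}}(A',B^*,c)=pq|B^*|\pm\eps^{1/4}n$ (Lemma~\ref{Lemma_one_random_set_nearly_regular}). The exceptional vertices are deleted. Instead of weights, the paper then splits each of $A',B',C'$ into two disjoint $\bar q$-random pieces with $3\bar q\approx q$, which is where $2q/3$ is used. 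It applies the nibble separately to three blocks, each with one arbitrary class and two random ones.

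Two further points need repair.
\begin{itemize}
\item Your McDiarmid argument in Step~1 assumes independence of $A',B',C'$, which the statement does not grant. Under an arbitrary coupling, even $e_{\mathcal{H}}(a,B',C')$ can be far from $pq_Bq_Cn$ for some $a$. The paper's device of using one random set at a time, with conclusions uniform over the other sets, handles this at the cost of a few more exceptional vertices.
\item Kahn's theorem is qualitative, so it does not give $\eta$ polynomial in $\eps$. To get that, sample edges with probability proportional to your type-constant weights and apply a quantitative nibble such as Lemma~\ref{lem:mainnibble}.
\end{itemize}
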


In the setting of Theorem~\ref{thm:RSBcoveringstephyper}, a usual nibble-type theorem would guarantee that $\mathcal{H}[A', B',C']$ contains a nearly spanning matching -- this follows simply because $\mathcal{H}[A', B',C']$ would inherit the regularity properties of $\mathcal{H}$ with good precision, which is all that is needed to invoke a standard nibble-type theorem (see, for example,~\cite[Section~4.7]{alon2004probabilistic}). Even though $\mathcal{H}[\bar{A},\bar{B},\bar{C}]$ can have significantly more vertices than $\mathcal{H}[A', B',C']$, and even though the new vertices (e.g., those in $\bar{A}\setminus A'$) could be chosen adversarially, Theorem~\ref{thm:RSBcoveringstephyper} still guarantees a nearly perfect matching in $\mathcal{H}[\bar{A},\bar{B},\bar{C}]$. The result is very similar to  \cite[Theorem 4.1]{montgomery2023proof} except that the random sets may have some dependencies, as in \cite[Lemma 3.9]{muyesser2022random}. The proof of Theorem~\ref{thm:RSBcoveringstephyper} contains no novelty compared with \cite{montgomery2023proof, muyesser2022random}, but as it does not formally follow from either, we include a proof in Appendix~\ref{appendix:B}.

Finally, we recall the following standard version of Chernoff's bound (see, for example, \cite[Corollary 2.2 and Theorem 2.10]{janson2011random}).
\begin{lemma}[Chernoff's bound]\label{chernoff-bound}
Let $X$ be a random variable with mean $\mu$ which is binomially distributed or hypergeometrically distributed.
Then, for any $0<\gamma<1$, we have that $\mathbb{P}(|X-\mu|\geq \gamma \mu)\leq 2e^{-\mu \gamma^2/3}$.
\end{lemma}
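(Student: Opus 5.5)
The statement immediately above is Lemma~\ref{chernoff-bound}, the standard Chernoff bound, so the plan is the classical exponential-moment argument, first for the binomial case and then transferred to the hypergeometric case.

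\emph{Binomial case.} Write $X=\sum_{i=1}^N \xi_i$ with independent Bernoulli($\pi$) variables, so that $\mu=N\pi$. For $\lambda>0$, Markov's inequality applied to $e^{\lambda X}$ gives
\[\mathbb{P}(X\geq (1+\gamma)\mu)\leq e^{-\lambda(1+\gamma)\mu}\,\mathbb{E}e^{\lambda X}.\]
Independence and the inequality $1+x\leq e^x$ give
\[\mathbb{E}e^{\lambda X}=\prod_{i=1}^N\bigl(1+\pi(e^\lambda-1)\bigr)\leq e^{\mu(e^\lambda-1)}.\]
Choosing $\lambda=\log(1+\gamma)$ yields the bound $\exp(-\mu\,\phi(\gamma))$, where $\phi(\gamma)=(1+\gamma)\log(1+\gamma)-\gamma$. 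The lower tail is handled symmetrically with $e^{-\lambda X}$ and $\lambda=-\log(1-\gamma)$, giving $\exp(-\mu\,\phi(-\gamma))$.

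\emph{Elementary inequalities.} It remains to check that $\phi(\gamma)\geq \gamma^2/3$ and $\phi(-\gamma)\geq\gamma^2/2$ for $0<\gamma<1$. Both follow by comparing derivatives: each side vanishes at $0$, and for instance $\phi'(\gamma)=\log(1+\gamma)\geq 2\gamma/3$ on $[0,1]$, since $\log(1+\gamma)$ is concave and $\log 2\geq 2/3$. Adding the two tail bounds gives $2e^{-\mu\gamma^2/3}$.

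\emph{Hypergeometric case.} Use Hoeffding's observation that sampling without replacement is dominated in the convex order by sampling with replacement. Concretely, $\mathbb{E}f(X)\leq \mathbb{E}f(X')$ for every convex $f$, where $X'$ is binomial with the same mean. Applying this with $f(x)=e^{\pm\lambda x}$, the moment generating function bound above transfers verbatim, and the same tail estimates follow.

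The only nontrivial step is this convex-order comparison. I would prove it by coupling: draw the sample with replacement, then express the without-replacement sample as a conditional expectation of it (via exchangeability), and conclude with Jensen's inequality. Alternatively, one can simply cite \cite{janson2011random}, as the paper does, since this is a standard result.
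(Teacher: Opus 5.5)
Your proof is correct. The exponential-moment bounds with $\lambda=\log(1+\gamma)$ and $\lambda=-\log(1-\gamma)$, the estimates $\phi(\gamma)\geq\gamma^2/3$ and $\phi(-\gamma)\geq\gamma^2/2$, and Hoeffding's convex-order domination of the hypergeometric by the binomial with the same mean together give exactly $2e^{-\mu\gamma^2/3}$. The paper gives no proof of its own and simply cites \cite{janson2011random}; your argument is the standard derivation behind that citation.
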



\section{Hamilton routers}\label{sec:hamroutersintro}
In this section, we build directed coloured Hamilton routers. Later, we will apply our results for directed graphs to any properly coloured graph $G$ by replacing its edges $xy\in E(G)$ with the two directed edges $xy$ and $yx$ with colour $c(xy)$, noting that this gives a properly coloured digraph. In part for the discussion in Section~\ref{sec:overview}, we will define our key structure, a Hamilton router, in both undirected and directed forms, using `(directed)' to mean that the sentence applies in the directed and undirected cases.

\begin{defn}[Hamilton router] Suppose $A$ and $B$ are disjoint subsets of $V(S)$ with $|A|=|B|$ for some (directed) graph $S$. We call $S$ a (directed) $A,B$\textit{-Hamilton-router} if, for every bijection $\phi\,\colon A\to B$, $V(S)$ can be partitioned into a collection of (directed) paths, $\mathcal{P}_\phi$, with endpoints in $A\times B$ so that, letting $M_\phi$ be the (directed) matching $\{(\phi(a),a)\colon a\in A\}\subseteq B\times A$, the edge-set $E(\mathcal{P}_\phi)\cup M_\phi$ forms a Hamilton cycle over $V(S)$. The \textit{order} of an $A,B$-Hamilton-router is $|A|=|B|$ and the \emph{depth} of an $A,B$-Hamilton-router is $|V(S)|/|A|-1$ (i.e., the average length of a component path in any path partition $\mathcal{P}_\phi$).
\par We define $A,B,C$-Hamilton-routers analogously as edge-coloured Hamilton routers where the colour set of each path partition $\mathcal{P}_\phi$ is always $C$ and $|C|=|E(\mathcal{P}_\phi)|$ (independently of the bijection $\phi$). If $A,B,C$ are implicit, we simply call these objects \emph{coloured Hamilton routers}.
\end{defn}

\noindent \textbf{Remark 1.} $A,B$-Hamilton routers $S$ of order $2$ have a special role in our arguments. In this case, the definition is equivalent to $S$ having designated vertices $a_1,a_2,b_1,b_2$, and two pairs of paths $\{a_1\xrightarrow{P_1} b_1,a_2\xrightarrow{P_2}b_2\}$, $\{a_1\xrightarrow{Q_1} b_2,a_2\xrightarrow{Q_2}b_1\}$ where $V(S)=V(P_1)\sqcup V(P_2)=V(Q_1)\sqcup V(Q_2)$.  Throughout the paper, we will refer to Hamilton routers of order 2 as \emph{comparators}. \vspace{2mm}

\noindent \textbf{Remark 2.} A $4$-cycle yields a Hamilton router of order $2$ (where $A$ and $B$ correspond to the bipartition of the $4$-cycle). While any complete bipartite graph $K_{n,n}$ naturally provides a Hamilton router of order $n$, we will need much sparser Hamilton routers. \vspace{2mm}

\noindent \textbf{Remark 3.} It follows from the definition that, in a coloured Hamilton router, each path partition $\mathcal{P}_\phi$ is a rainbow path forest with colour set exactly $C$. Therefore, we can observe that there is no proper edge-colouring of $C_4$ that yields a coloured Hamilton router. The existence of \textit{coloured} Hamilton routers which are properly coloured is not trivial (for any order), but we will shortly construct sparse examples. \vspace{2mm}

\par We now give a simple proposition to illustrate the definition of a Hamilton router (in an uncoloured setting, for simplicity, but the proposition generalises in an obvious way to $A,B,C$-Hamilton-routers).
\begin{prop}\label{prop:basic}
    Let $D$ be a directed graph containing an $A,B$-Hamilton-router $S$. Suppose that there exists a directed path forest $\mathcal{P}$ with exactly $|A|=|B|$ paths, start-vertices in $B$, end-vertices in $A$, and internal vertices partitioning $V(D)\setminus V(S)$. Then, $D$ has a directed Hamilton cycle.
\end{prop}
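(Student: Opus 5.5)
The plan is to use the path forest $\mathcal{P}$ to define a bijection $\phi\colon A\to B$, and then apply the router property of $S$ to $\phi$. Write the paths of $\mathcal{P}$ as $P_1,\ldots,P_k$ with $k=|A|=|B|$. Each $P_i$ starts at some $b_i\in B$ and ends at some $a_i\in A$. Because there are exactly $|A|=|B|$ paths, and the paths are vertex-disjoint, the start-vertices are distinct and exhaust $B$, and the end-vertices are distinct and exhaust $A$. So $\phi(a_i):=b_i$ is a well-defined bijection $A\to B$, sending the end of each path to its start. One degenerate case should be noted: a path could have $b_i=a_i$, but $A$ and $B$ are disjoint, so each $P_i$ has at least one edge. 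If a path were a single edge from $b_i$ to $a_i$, with no internal vertices, that is still fine.

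Next, apply the definition of a directed $A,B$-Hamilton-router to this $\phi$. This gives a partition $\mathcal{P}_\phi$ of $V(S)$ into directed paths with endpoints in $A\times B$, so that $E(\mathcal{P}_\phi)\cup M_\phi$ is a directed Hamilton cycle $H$ on $V(S)$. Here $M_\phi=\{(\phi(a),a):a\in A\}=\{(b_i,a_i)\}$. The key observation is that $M_\phi$ consists precisely of the "virtual" edges $b_i\to a_i$. Each such edge is the shortcut for the real path $P_i$, which runs from $b_i$ to $a_i$ in $D$.

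Now form $H'$ by replacing each edge $(b_i,a_i)$ of $H$ with the path $P_i$. Since $H$ traverses $b_i\to a_i$ as a directed edge, substituting a directed $b_i$–$a_i$ path preserves the cycle structure and orientation, so $H'$ is a closed directed walk. I must then check that $H'$ visits each vertex of $D$ exactly once. Vertices of $V(S)$ appear exactly once in $H$, and the substitution only adds internal vertices of the $P_i$. Those internal vertices lie in $V(D)\setminus V(S)$, are pairwise distinct across paths since $\mathcal{P}$ is a path forest, and together partition $V(D)\setminus V(S)$ by hypothesis. Hence $H'$ is a directed Hamilton cycle of $D$.

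The main point needing care is the bookkeeping: the endpoints of the $P_i$ are exactly the vertices of $A\cup B\subseteq V(S)$, and no internal vertex of any $P_i$ lies in $V(S)$. Both facts are given by the hypothesis, so there is no genuine obstacle. The edges of $H$ coming from $\mathcal{P}_\phi$ are real edges of $S\subseteq D$, and every edge of $M_\phi$ has been replaced, so $H'$ uses only edges of $D$.
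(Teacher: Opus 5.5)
Your proposal is correct and is the paper's proof: define $\phi$ by sending each end-vertex in $A$ to the start-vertex in $B$ of the same path, apply the router property to obtain $\mathcal{P}_\phi$, and take $\mathcal{P}_\phi\cup\mathcal{P}$ (equivalently, replace each edge of $M_\phi$ by the corresponding path of $\mathcal{P}$). Your extra checks — that the endpoints exhaust $A$ and $B$, and that the internal vertices avoid $V(S)$ and are pairwise distinct — make explicit what the paper leaves implicit.
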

\begin{proof}
    Let $\phi\colon A\to B$ be the bijection induced by the endpoints of the paths in $\mathcal{P}$  (i.e., mapping each end vertex to the corresponding start vertex). Let $\mathcal{P}_\phi$ be the directed path partition of $S$ from the definition of a Hamilton-router applied with $\phi$. Then, $\mathcal{P}_\phi\cup \mathcal{P}$ is the desired directed Hamilton cycle.
\end{proof}

\par Our first task is to build small Hamilton routers of order two. We start by showing a basic rainbow connectivity property in properly coloured typical digraphs which are close to complete (see Definition~\ref{defn:typical2var}).

\begin{lemma}[Connecting lemma]\label{Connecting_lemma}
Let $1/n, \,  \eps\leq 10^{-10}$. Let $D$ be a properly coloured $(n,\eps)$-typical digraph. Suppose there is a set $\mathcal{J}$ of at most $n/1000$ ``forbidden'' vertices and colours. Then, for any distinct $x,y\in V(D)$, there is a rainbow $x,y$-path of length $3$ whose colours and internal vertices are outside of $\mathcal{J}$.
\end{lemma}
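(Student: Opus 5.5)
We count paths $x\to u\to v\to y$ directly and show that the bad ones cannot exhaust them. Let $N^+(x)$ be the out-neighbourhood of $x$ and $N^-(y)$ the in-neighbourhood of $y$; each has size $(1\pm\eps)n$. First remove from $N^+(x)$ every vertex in $\mathcal{J}$, the vertex $y$, and every $u$ with $c(xu)\in\mathcal{J}$. Because the colouring is proper, each forbidden colour occurs on at most one out-edge of $x$, so at most $2n/1000+1$ vertices are removed. This leaves a set $U$ with $|U|\geq (1-\eps-0.003)n$. In the same way, remove from $N^-(y)$ the vertices of $\mathcal{J}$, the vertex $x$, and every $v$ with $c(vy)\in \mathcal{J}$. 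This leaves $W$ with $|W|\geq (1-\eps-0.003)n$.

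Next, count edges $uv$ with $u\in U$, $v\in W$ and $u\neq v$. The digraph has at most $(1+\eps)n$ vertices and every out-degree is at least $(1-\eps)n$, so each $u$ has at most $2\eps n+1$ non-out-neighbours. Hence each $u\in U$ has at least $|W|-3\eps n-2$ out-neighbours in $W$, and the number of such edges is at least roughly $(0.996)^2n^2$.

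Now discard the edges $uv$ that fail to give a rainbow path outside $\mathcal{J}$. There are four ways an edge can fail.
\begin{enumerate}
\item Its colour $c(uv)$ lies in $\mathcal{J}$. Each colour appears at most $(1+\eps)n$ times, so at most $(n/1000)(1+\eps)n$ edges fail this way.
\item $c(uv)=c(xu)$. This cannot happen: both edges are incident to $u$, and the colouring is proper.
\item $c(uv)=c(vy)$. This cannot happen either, since both edges are incident to $v$.
\item $c(xu)=c(vy)$. For a fixed $u$, the colour $c(xu)$ appears on at most one in-edge of $y$ by properness, so it determines at most one $v$. This excludes at most $(1+\eps)n$ pairs in total.
\end{enumerate}
Distinctness of the vertices is already guaranteed: $u\neq y$, $v\neq x$, $u\neq v$, and there are no loops. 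Altogether at most about $0.0011n^2+O(n)$ edges are discarded, far fewer than $0.99n^2$, so some edge $uv$ survives. The path $xuvy$ is then a rainbow $x,y$-path of length $3$ whose internal vertices and colours avoid $\mathcal{J}$.

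I do not expect any step to be a real obstacle. The only point needing care is item 4, where properness at $y$ has to be applied to a colour coming from an out-edge of $x$. The sizes of $U$ and $W$ use only the degree bounds together with properness at $x$ and at $y$.
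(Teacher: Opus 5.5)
\paragraph{Gap in items 2 and 3.} Items 2 and 3 are false in the directed setting. Under this paper's definition, a properly coloured digraph only forbids two \emph{in}-edges, or two \emph{out}-edges, at the same vertex from sharing a colour. An in-edge and an out-edge at a vertex may have the same colour.
\begin{itemize}
\item At $u$, the edge $xu$ is an in-edge and $uv$ is an out-edge, so $c(xu)=c(uv)$ can occur.
\item At $v$, the edge $uv$ is an in-edge and $vy$ is an out-edge, so $c(uv)=c(vy)$ can occur.
\end{itemize}
For example, every colour class of an optimally coloured $\overleftrightarrow{K_n}$ is a union of directed cycles, so monochromatic directed paths $x\to u\to v$ are common. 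Your reasoning is only valid when $D$ comes from an undirected proper colouring by doubling edges. The lemma is needed for general properly coloured digraphs, e.g.\ through Theorem~\ref{thm:furthermore} in the proof of Theorem~\ref{thm:GS-intro}.

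\paragraph{How to repair it.} The fix stays inside your counting framework. For fixed $u$, properness among the out-edges of $u$ gives at most one $v$ with $c(uv)=c(xu)$. For fixed $v$, properness among the in-edges of $v$ gives at most one $u$ with $c(uv)=c(vy)$. So each of these events discards at most $(1+\eps)n$ pairs. That is negligible against your roughly $0.99n^2$ candidate edges, and with this correction your global count is a valid proof.

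\paragraph{Comparison with the paper.} The paper handles the same issue locally. It fixes two out-neighbours $v_1,v_2$ of $x$, with colours $c_1,c_2$. It then takes $v$ in the intersection of the cleaned out-neighbourhoods of $v_1,v_2$ and the cleaned in-neighbourhood of $y$; the cleaning excludes $c_1,c_2$, which rules out the in/out coincidence at $v_1$ or $v_2$. If $c(v_1v)=c(vy)$, then $c(v_2v)\neq c(vy)$, since $v_1v$ and $v_2v$ are two in-edges of $v$. So the in/out colour coincidence you dismissed is exactly what the paper's proof is built around.
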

\begin{proof}
Given distinct $x,y\in V(D)$, fix distinct $v_1, v_2 \in V(D)\setminus(\mathcal{J} \cup \{x,y\})$ for which there are $c_1,c_2\notin \mathcal{J}$ such that $xv_i \in E(D)$ and $xv_i$ has colour $c_i$ for each $i \in [2]$. Note that there are at least $(1-\eps)n-n/1000-2$ choices for $v_1$ and then $v_2$, so this is possible.

Now, for each $i\in [2]$, let $U_i$ be the out-neighbourhood of $v_i$ avoiding $\{x,y,v_1, v_2\} \cup \mathcal{J}$ and any out-edges with colour in $\{c_1, c_2\} \cup \mathcal{J}$. Furthermore, let $U_y$ be the in-neighbourhood of $y$ avoiding the vertices $\{x,y,v_1, v_2\} \cup \mathcal{J}$ and any in-edges of colours in $\{c_1,c_2\}\cup \mathcal{J}$. Note that $|V(D)\setminus U_1|, |V(D) \setminus U_2|, |V(D) \setminus U_y| \leq \eps n+n/1000+6$. Thus $|U_1 \cap U_2 \cap U_y| \geq n-3\eps n - 3n/1000-18 \geq n/2\geq 1$. Now, if there is some $v \in U_1 \cap U_2 \cap U_y$ such that $c(v_1v) \neq c(vy)$ we have our desired rainbow path of length $3$ via $xv_1vy$. Otherwise, we have that $c(v_1v)=c(vy)$ for every $v \in U_1 \cap U_2 \cap U_y$. But then, as $D$ is properly coloured, $c(v_2v) \neq c(v_1v)$ and so $xv_2vy$ is our desired rainbow $x,y$-path of length $3$ in $D$.
\end{proof}

To prove that coloured Hamilton routers of order $2$ exist, we will use the following theorem of Janzer \cite[Theorem 3.1]{janzer2023rainbow}.
\begin{theorem}[Janzer]\label{Janzer_thm}
	Let $k\geq 2$ and $s$ be positive integers. Then there exists a constant $C=C(k,s)$ with the following property. Suppose that $G=(V,E)$ is a graph with $n$ vertices and at least $Cn^{1+1/k}$ edges. Let $\sim$ be a symmetric binary relation on $V$ such that, for every $u\in V$ and $v\in V$, $v$ has at most $s$ neighbours $w\in V$ which satisfy $u\sim w$. Let $\approx$ be a binary relation on $E$ such that, for every $uv\in E$ and $w\in V$, $w$ has at most $s$ neighbours $z\in V$ which satisfy $uv\approx wz$. Then $G$ contains a $2k$-cycle $x_1x_2\dots x_{2k}$ such that $x_i\not \sim x_j$ for every $i\neq j$ and $x_ix_{i+1}\not \approx x_jx_{j+1}$ for every $i\neq j$ (where $x_{2k+1}:=x_1$).
\end{theorem}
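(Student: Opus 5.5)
Since the statement is imported from \cite[Theorem 3.1]{janzer2023rainbow}, the paper may use it as a black box. If I were to prove it myself, I would follow the homomorphism-counting approach to the even-cycle Turán problem. The aim is to show that closed walks of length $2k$ greatly outnumber the "bad" closed walks. A closed walk $x_1\dots x_{2k}$ is bad if it is degenerate (a repeated vertex), or has some $x_i\sim x_j$ with $i\neq j$, or has some $x_ix_{i+1}\approx x_jx_{j+1}$ with $i\neq j$.

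\textbf{Step 1: regularise.} First pass to a subgraph $G'$ on $m$ vertices that is $K$-almost-regular, meaning every degree lies within a factor $K=K(k)$ of $d$, with $d\geq c\,C m^{1/k}$. This is the standard Erdős--Simonovits / Jiang--Seiver regularisation lemma, applied to the hypothesis $e(G)\geq Cn^{1+1/k}$. Both relations restrict to $G'$ without weakening their hypotheses, so from now on I may assume $G$ is almost-regular with degree $d\geq C' n^{1/k}$, where $C'$ is large.

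\textbf{Step 2: count closed walks.} Let $h=\hom(C_{2k},G)$, the number of closed $2k$-walks. By the spectral (or Sidorenko) bound, $h\geq \sum_i\lambda_i^{2k}\geq (2e(G)/n)^{2k}\cdot n \gtrsim n d^{2k}$.

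\textbf{Step 3: bound the bad walks.} Fix a pair of indices $i\neq j$ and count the closed walks violating the condition at that pair. There are only $O(k^2)$ pairs, so it suffices to show each such count is $o(h)$.
\begin{itemize}
\item \emph{Adjacent indices.} Suppose $j=i+1$, or, for the edge relation, the two edges share a vertex. Then the degree hypothesis caps the number of choices at that step by $s$ instead of $d$. This gives a count of at most $s\cdot h_{2k-1}$-type quantities, which are $O(s\,n d^{2k-1})=o(h)$ because $d\to\infty$.
\item \emph{Non-adjacent indices.} Split the cycle at positions $i$ and $j$ into two walks, $x_i\to x_j$ and $x_j\to x_i$. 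Write $W_\ell(u,v)$ for the number of walks of length $\ell$ from $u$ to $v$. The bad count is at most $\sum_{u}\sum_{v,\,w:\ w\in N(v),\,u\sim w} W_{\ell}(u,v)W_{2k-\ell-1}(w,u)$, or a similar expression. Apply Cauchy--Schwarz in the form $W_\ell(u,v)W_{2k-\ell}(v,u)\le \tfrac12\big(W_{\ell}(u,v)^2+W_{2k-\ell}(u,v)^2\big)$, and use $\sum_v W_\ell(u,v)^2=W_{2\ell}(u,u)$ together with Hölder's inequality to compare $\sum_u W_{2\ell}(u,u)$ with $h^{\ell/k}n^{1-\ell/k}$. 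The relation hypothesis allows at most $s$ "twists" per vertex, so this yields a bound of the shape $O_{k,s}\big(h^{1-1/(2k)}\,n^{1/(2k)}\big)$, possibly with a factor $d^{-1}$.
\end{itemize}

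\textbf{Step 4: conclude.} Since $h\gtrsim nd^{2k}\ge C'^{2k}n^2$, the Step 3 bound is $o(h)$ once $C'$ is large. Hence some closed walk is not bad, and it is the required $2k$-cycle.

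\textbf{Main obstacle.} I expect the non-adjacent case of Step 3 to be the hardest part, specifically getting the Hölder/Cauchy--Schwarz bookkeeping to produce exactly the exponent $1-1/(2k)$ (Janzer's key lemma). My first attempt would be induction on the distance $|i-j|$, using the monotonicity $\hom(C_{2\ell})^{1/(2\ell)}$ nondecreasing in the normalised sense, as in Janzer's paper.
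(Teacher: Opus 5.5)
Your framework (regularise, count homomorphic $C_{2k}$'s, show the bad ones are a small fraction) is the right one. The paper itself gives no proof and simply imports the result from Janzer. However, the quantitative core of your Steps 2--3 is wrong, and that is exactly where the content of the theorem lives.

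\textbf{Step 2 is off by a factor of $n$.} The correct bound is $\sum_i\lambda_i^{2k}\ge\lambda_1^{2k}\ge(2e(G)/n)^{2k}=d^{2k}$, not $nd^{2k}$; already in $K_n$ one has $h\le n^{2k}$. Moreover $h=\Theta(d^{2k})$ is actually attained, e.g.\ by random $d$-regular graphs with $d=C'n^{1/k}$. With the correct $h$, your adjacent-case estimate $O(s\,nd^{2k-1})$ exceeds $h$ by a factor $sn/d$. So capping one step by $s$ and then counting open walks is useless: the closing constraint must be kept.

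\textbf{The non-adjacent target bound is false.} Your bound $O_{k,s}(h^{1-1/(2k)}n^{1/(2k)})$ fails already for $k=2$. Take $\sim$ to be equality (so $s=1$; you need this relation anyway to exclude degenerate walks) and a random $d$-regular graph with $d=C\sqrt n$. The closed $4$-walks with $x_1=x_3$ number $\sum_u d(u)^2=C^2n^2$. On the other hand $h=\Theta(C^4n^2)$, so $h^{3/4}n^{1/4}=O(C^3n^{7/4})$, which is smaller.

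\textbf{Sanity check.} With $\sim$ and $\approx$ both equality, the statement reduces to $\mathrm{ex}(n,C_{2k})=O(n^{1+1/k})$. Your Steps 2--4 as written give a bad fraction $O_{k,s}(1/d)$. That would put a $C_{2k}$ in every almost-regular graph of large constant degree, contradicting for instance the $C_4$-free polarity graphs. The hypothesis $d\ge C'n^{1/k}$ is never genuinely used in your argument.

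\textbf{The missing ingredient is Janzer's key lemma.} If $G$ has maximum degree $\Delta$, the number of homomorphic $2k$-cycles with $x_i\sim x_j$ for some $i\neq j$ is $O_k\big(\sqrt{s\Delta}\,n^{1/(2k)}h^{1-1/(2k)}\big)$, with an analogous bound for $\approx$.
\begin{itemize}
\item The exponent on $h$ that you guessed is right. The prefactor $\sqrt{s\Delta}$, however, cannot be dropped, as the example above shows.
\item This prefactor is what produces the threshold. With $\Delta\le Kd$ and $h\ge d^{2k}$, the bad fraction is $O_k\big(\sqrt{sK}\,(n^{1/k}/d)^{1/2}\big)$. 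This is small precisely because $d\ge C'n^{1/k}$ with $C'$ large.
\end{itemize}

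The mechanism is already visible in the antipodal case $j=i+k$.
\begin{itemize}
\item Write $W_k(u,w)=\sum_{v\in N(w)}W_{k-1}(u,v)$ and apply Cauchy--Schwarz over this last step.
\item The hypothesis on $\sim$ says at most $s$ vertices $w\in N(v)$ satisfy $u\sim w$. This gives $\sum_{u\sim w}W_k(u,w)^2\le s\Delta\hom(C_{2k-2})$.
\item The power-mean inequality on eigenvalues gives $\hom(C_{2k-2})\le n^{1/k}h^{1-1/k}$.
\item Combining with the trivial bound $h$ via $\min\{h,X\}\le\sqrt{hX}$ yields $\sqrt{s\Delta}\,n^{1/(2k)}h^{1-1/(2k)}$.
\end{itemize}
For non-antipodal pairs, your AM--GM splitting of the two arcs does not work. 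For the longer arc, $\sum_{u,v}W_{2k-\ell}(u,v)^2=\hom(C_{4k-2\ell})$ with $2k-\ell>k$ is not bounded above by $h$ via H\"older, since the inequality runs the other way there. So a more careful argument is needed for these pairs, as in Janzer's paper.
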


We can now show that $A,B,C$-Hamilton-routers of order two exist in any properly coloured typical digraph which is close to complete.

\begin{lemma}\label{lem:comparators exist} Let $1/n, \, \eps \ll 1$, and let $D$ be a properly coloured $(n,\eps)$-typical digraph. Suppose there is a set $\mathcal{J}$ of at most $n/10^{10}$ forbidden vertices and colours. Then, disjointly from the vertices and colours in $\mathcal{J}$, $D$ contains a coloured Hamilton router of order $2$ and depth $9$.
\end{lemma}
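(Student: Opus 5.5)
The plan is to build the comparator in two stages. First, a small ``switching gadget'' carries the actual crossing, and Theorem~\ref{Janzer_thm} supplies it. Second, the gadget is padded out with rainbow paths of length $3$ from Lemma~\ref{Connecting_lemma} until the two paths of each partition together cover exactly $20$ vertices and $18$ edges, which gives depth $9$.

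The first step is to see why the naive gadgets fail and what must replace them. Suppose the straight pair $a_1\to b_1$, $a_2\to b_2$ and the crossed pair $a_1\to b_2$, $a_2\to b_1$ differ in only one edge at each end. Comparing colour sets then forces two in-edges of $b_2$, or two out-edges of some middle vertex, to share a colour, which contradicts properness. This is the phenomenon noted in Remark~3. So the switch has to be spread over a longer closed structure, namely a $2k$-cycle $x_1x_2\cdots x_{2k}$, with $k$ a small constant such as $k=2$ or $3$, taken in an auxiliary graph $G$.

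I intend to let the vertices of $G$ be (a dense subset of) $V(D)$. An edge $uv$ of $G$ will encode a short coloured structure in $D$ attached to $u$ and $v$: for instance, a pair of out-edges $u\to u'$ and $v\to v'$ of a common colour. Traversing the cycle alternately ``straight'' and ``crossed'' then uses the same multiset of colours, because each colour occurs on the two edges of a same-colour pair and either routing uses exactly one of them at each swap position. The relations are set as follows. $u\sim w$ means that the $D$-structures attached to $u$ and $w$ share a vertex. $uv\approx wz$ means that the structures encoded by the two edges share a vertex or a colour. Properness of $D$ gives that each vertex $v$ has only $O(1)$ neighbours $w$ in conflict with a fixed $u$ or a fixed edge $uv$, so the hypotheses of Theorem~\ref{Janzer_thm} hold with $s=O(1)$. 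Typicality gives $e(G)=\Omega(n^2)\ge Cn^{1+1/k}$ once $\mathcal J$ is deleted, since $|\mathcal J|\le n/10^{10}$. The output is a cycle whose encoded $D$-structures are pairwise vertex-disjoint and colour-disjoint, apart from the intended repetitions.

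Second, I turn this cycle into a comparator with designated vertices $a_1,a_2,b_1,b_2$ and two path pairs. Each of the two routings, $\{P_1,P_2\}$ and $\{Q_1,Q_2\}$, covers all gadget vertices and has the same colour set. Any gaps between gadget vertices that the routing requires are bridged by rainbow $3$-edge paths from Lemma~\ref{Connecting_lemma}. These paths are applied one at a time, each time adding to $\mathcal J$ all vertices and colours used so far; this stays within the $n/1000$ budget because only $O(1)$ items are ever added. Every bridge is used by both routings, so the colour sets of the two routings stay equal. Finally, the number of bridges and their placement are chosen so that $|V(S)|=20$. If the natural gadget is shorter than this, I extend $a_i$ or $b_i$ outwards by further connecting paths; this is harmless because such extensions are common to both routings.

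The main obstacle is the first stage: pinning down the exact auxiliary edge-encoding so that a single Janzer cycle yields two routings with identical colour sets while remaining a properly coloured, rainbow path forest in each routing. I would first try $k=2$ with edges encoding same-colour pairs, which gives a $4$-cycle of alternating colours. If that is forced to repeat colours within one routing, as the calculation above suggests, I would pass to $k=3$ and interleave connecting paths between consecutive gadget edges so that no vertex needs two equally coloured out- or in-edges.
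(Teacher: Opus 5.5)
Your proposal has a genuine gap, and it is exactly the step you flag as the main obstacle; the encoding you propose there cannot work. Because every bridge is shared, write the two routings as $B\cup R_1$ and $B\cup R_2$, with $R_1,R_2$ the gadget edges. In each routing every vertex other than $a_1,a_2$ has in-degree exactly one, and every vertex other than $b_1,b_2$ has out-degree exactly one. So $R_1$ and $R_2$ must have identical in- and out-degrees at every vertex, not merely the same colours.

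Your scheme takes $V(G)\subseteq V(D)$, lets a $G$-edge $uv$ encode out-edges $u\to u'$ and $v\to v'$ of a common colour, and uses the Janzer cycle to make these structures disjoint. Then the heads of all gadget edges are pairwise distinct, so $R_1$ and $R_2$ have disjoint head sets. A head $h$ of $R_1$ gets its unique in-edge from $R_1$ in the first routing, so $B$ has no edge into $h$. Then $h$ has in-degree $0$ in the second routing although $h\notin\{a_1,a_2\}$. Hence rotating one way or the other never gives two path pairs on the same vertex set.

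What you actually need is for heads to coincide in a prescribed pattern: two edges of different colours enter a common head, and the same colour pair reappears at another common head. Nothing in your $G$ forces this. Separately, your relation $\approx$ (``the encoded structures share a vertex or a colour'') violates the hypothesis of Theorem~\ref{Janzer_thm}. Taking $w=u$, every $G$-neighbour $z$ of $u$ gives $uv\approx uz$, and consecutive cycle edges would always be related.

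The missing idea is to build the coincidences into the auxiliary vertices. The paper splits $V(D)\setminus\mathcal{J}$ into $X\cup Y$ and takes a bipartite graph $H$. Its classes are the ordered pairs $(x_1,x_2)$ of distinct $X$-vertices and the ordered pairs $(c,d)$ of distinct colours. These are joined by an edge of colour $y\in Y$ whenever $x_1y,x_2y\in E(D)$ have colours $c,d$, so each $H$-edge is an in-cherry at a single head $y$. Now $|H|=\Theta(n^2)$ and $e(H)\geq n^3/10$, so Theorem~\ref{Janzer_thm} applies with $k=3$. Here $\sim$ means sharing a vertex or colour of $D$ (at most $4$ conflicting neighbours, by properness), and $\approx$ means equal $H$-colour (at most $1$, since $H$ is properly coloured).

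The resulting $6$-cycle $(u_1,u_2)(c_1,c_2)(v_1,v_2)(d_1,d_2)(w_1,w_2)(e_1,e_2)$ gives three $2\times 2$ stages: $\{u_1,u_2\}\to\{u_3,u_4\}$, $\{v_1,v_2\}\to\{v_3,v_4\}$ and $\{w_1,w_2\}\to\{w_3,w_4\}$. Their colour pairs chain cyclically, so taking all stages straight or all crossed uses each of the six colours exactly once and covers the same vertices. Four connecting paths $u_3\to v_1$, $u_4\to v_2$, $v_3\to w_1$, $v_4\to w_2$ from Lemma~\ref{Connecting_lemma} then give paths of length exactly $9$, so no padding is needed. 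Your instinct to use a Janzer cycle in which each colour occurs twice and each routing uses one copy is right; it just has to live on these pair-vertices rather than on $V(D)$.
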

\begin{proof}
Let $X, Y$ be an arbitrary nearly-balanced partition of $V(D)\setminus \mathcal{J}$.
Let $V_P=\{(x_1,x_2): x_i\in X, x_1\ne x_2\}$ and $C_P=\{(c,d): c,d\in C(D)\setminus \mathcal{J}, c\ne d\}$ be the sets of ordered pairs of distinct $X$-vertices and colours of $D\setminus \mathcal{J}$, respectively. We construct an auxiliary coloured bipartite graph $H$ with parts $V_P$ and $C_P$ whose set of colours will be $Y$, along with relations $\sim$ and $\approx$. Using Theorem~\ref{Janzer_thm}, we will then find an appropriate 6-cycle in this auxiliary graph and use this to find the comparator depicted in Figure~\ref{fig:colouredcomparator}.

For each $(x_1,x_2)\in V_P$, $(c,d)\in C_P$, and $y\in Y$, take a colour-$y$ edge $(x_1,x_2)(c,d)$ in $H$  whenever the edges $x_1y,x_2y\in E(D)$, $c(x_1y)=c$, and $c(x_2y)=d$. Define relations $\sim$ on $V(H)$ and $\approx$ on $E(H)$ as follows.
\begin{itemize}
    \item For each $u,v\in V(H)$, let $u\sim v$ if there is some vertex or colour of $D$ that is in both tuples $u$ and $v$.
    \item For each $e,f\in E(H)$, let $e\approx f$ if $e$ and $f$ have the same colour in $H$.
\end{itemize}
Note that $n^2 \leq |H| \leq 3n^2/2$,
and $e(H) \geq \sum_{y\in Y}d^-_X(y)(d^-_X(y)-1)\ge n^3/10$.
Moreover, for any $u,v\in V(H)$ there are at most $4$ neighbours $w\in N_H(v)$ with $u\sim w$ (using that $D$ is properly coloured). Indeed, for example, letting $u=(x_1,x_2)$ and $v=(c_1,c_2)$, for each $w\in N_H(v)$ with $u\sim w$ we must have that $c(wv)$ is a colour-$c_1$ or colour-$c_2$ out-neighbour of $x_1$ or $x_2$, where the other cases follow similarly. Note also that the colouring of $H$ is proper which gives that for any $uv\in E(H)$ and $w\in V(H)$ there is at most one vertex $z\in N_H(w)$ with $uv\approx zw$. By Theorem~\ref{Janzer_thm}, we get a $6$-cycle in $H$, given by $C=(u_1,u_2)(c_1, c_2)(v_1,v_2)(d_1, d_2)(w_1,w_2)(e_1, e_2)$, for which the vertices/edges are not related to each other by $\sim/\approx$ respectively.

Since the vertices are not related by $\sim$, we have that $u_1,u_2,c_1, c_2,v_1,v_2,d_1, d_2,w_1,w_2,e_1, e_2$ are all distinct. Furthermore, by the construction of $H$, they avoid $\mathcal{J}$. Let $u_3=c((e_1, e_2)(u_1,u_2))$, $u_4=c((u_1,u_2)(c_1, c_2))$, $v_3=c((c_1, c_2)(v_1,v_2))$, $v_4=c((v_1,v_2)(d_1, d_2))$, $w_3=c((d_1, d_2)(w_1,w_2))$, and $w_4=c((w_1,w_2)(e_1, e_2))$, noting that these are distinct from each other (since the edges of $C$ are unrelated to each other by $\approx$), from $u_1,u_2,v_1,v_2,w_1,w_2$ (since the $*_3, *_4$ vertices are in $Y$, while the $*_1, *_2$ vertices are in $X$) and also from $\mathcal{J}$.

Apply Lemma~\ref{Connecting_lemma} four times to get rainbow paths $P_1$ from $u_3$ to $v_1$, $P_2$ from $u_4$ to $v_2$, $P_3$ from $v_3$ to $w_1$, and $P_4$ from $v_4$ to $w_2$, each of length 3. By enlarging the set $\mathcal{J}$ at each step, we can ensure that, as well as avoiding the original set $\mathcal{J}$, these paths are vertex/colour disjoint from each other and from $u_1,u_2, u_3, u_4,v_1,v_2, v_3, v_4,w_1,w_2, w_3, w_4, c_1, c_2, d_1, d_2$, $e_1, e_2$ (except at their endpoints where necessary). Thus, we have found the following picture, which is a comparator, $\mathcal{C}$:

\begin{figure}[h]
    \centering

\tikzset{every picture/.style={line width=0.6pt}} 
\begin{tikzpicture}[x=0.75pt,y=0.75pt,yscale=-0.51,xscale=0.51]

\draw [color={rgb, 255:red, 155; green, 155; blue, 155 }  ,draw opacity=1 ][line width=1.5]    (145,41) .. controls (190.09,-2.62) and (201.09,85.38) .. (245,41) ;
\draw [color={rgb, 255:red, 155; green, 155; blue, 155 }  ,draw opacity=1 ][line width=1.5]    (145,141) .. controls (190.09,97.38) and (201.09,185.38) .. (245,141) ;
\draw [color={rgb, 255:red, 155; green, 155; blue, 155 }  ,draw opacity=1 ][line width=1.5]    (345,41) .. controls (390.09,-2.62) and (401.09,85.38) .. (445,41) ;
\draw [color={rgb, 255:red, 155; green, 155; blue, 155 }  ,draw opacity=1 ][line width=1.5]    (345,141) .. controls (390.09,97.38) and (401.09,185.38) .. (445,141) ;

\draw [color={rgb, 255:red, 255; green, 0; blue, 0 }  ,draw opacity=1 ][line width=3.75]    (45,41) -- (145,41) ;
\draw [color={rgb, 255:red, 0; green, 23; blue, 255 }  ,draw opacity=1 ][line width=3.75]    (45,41) -- (105.55,101.55) -- (145,141) ;
\draw [color={rgb, 255:red, 189; green, 16; blue, 224 }  ,draw opacity=1 ][line width=3.75]    (45,141) -- (145,141) ;
\draw [color={rgb, 255:red, 139; green, 87; blue, 42 }  ,draw opacity=1 ][line width=3.75]    (45,141) -- (145,41) ;

\draw [color={rgb, 255:red, 0; green, 5; blue, 255 }  ,draw opacity=1 ][line width=3.75]    (245,41) -- (345,41) ;
\draw [color={rgb, 255:red, 245; green, 166; blue, 35 }  ,draw opacity=1 ][line width=3.75]    (245,41) -- (345,141) ;
\draw [color={rgb, 255:red, 65; green, 117; blue, 5 }  ,draw opacity=1 ][line width=3.75]    (245,141) -- (345,141) ;
\draw [color={rgb, 255:red, 189; green, 16; blue, 224 }  ,draw opacity=1 ][line width=3.75]    (245,141) -- (345,41) ;

\draw [color={rgb, 255:red, 245; green, 166; blue, 35 }  ,draw opacity=1 ][line width=3.75]    (445,41) -- (545,41) ;
\draw [color={rgb, 255:red, 255; green, 0; blue, 0 }  ,draw opacity=1 ][line width=3.75]    (445,41) -- (545,141) ;
\draw [color={rgb, 255:red, 139; green, 87; blue, 42 }  ,draw opacity=1 ][line width=3.75]    (445,141) -- (545,141) ;
\draw [color={rgb, 255:red, 65; green, 117; blue, 5 }  ,draw opacity=1 ][line width=3.75]    (445,141) -- (545,41) ;

\draw  [fill={rgb, 255:red, 0; green, 0; blue, 0 }  ,fill opacity=1 ] (35,41) .. controls (35,35.48) and (39.48,31) .. (45,31) .. controls (50.52,31) and (55,35.48) .. (55,41) .. controls (55,46.52) and (50.52,51) .. (45,51) .. controls (39.48,51) and (35,46.52) .. (35,41) -- cycle ;
\draw  [fill={rgb, 255:red, 0; green, 0; blue, 0 }  ,fill opacity=1 ] (35,141) .. controls (35,135.48) and (39.48,131) .. (45,131) .. controls (50.52,131) and (55,135.48) .. (55,141) .. controls (55,146.52) and (50.52,151) .. (45,151) .. controls (39.48,151) and (35,146.52) .. (35,141) -- cycle ;
\draw  [fill={rgb, 255:red, 0; green, 0; blue, 0 }  ,fill opacity=1 ] (135,41) .. controls (135,35.48) and (139.48,31) .. (145,31) .. controls (150.52,31) and (155,35.48) .. (155,41) .. controls (155,46.52) and (150.52,51) .. (145,51) .. controls (139.48,51) and (135,46.52) .. (135,41) -- cycle ;
\draw  [fill={rgb, 255:red, 0; green, 0; blue, 0 }  ,fill opacity=1 ] (135,141) .. controls (135,135.48) and (139.48,131) .. (145,131) .. controls (150.52,131) and (155,135.48) .. (155,141) .. controls (155,146.52) and (150.52,151) .. (145,151) .. controls (139.48,151) and (135,146.52) .. (135,141) -- cycle ;
\draw  [fill={rgb, 255:red, 0; green, 0; blue, 0 }  ,fill opacity=1 ] (235,41) .. controls (235,35.48) and (239.48,31) .. (245,31) .. controls (250.52,31) and (255,35.48) .. (255,41) .. controls (255,46.52) and (250.52,51) .. (245,51) .. controls (239.48,51) and (235,46.52) .. (235,41) -- cycle ;
\draw  [fill={rgb, 255:red, 0; green, 0; blue, 0 }  ,fill opacity=1 ] (235,141) .. controls (235,135.48) and (239.48,131) .. (245,131) .. controls (250.52,131) and (255,135.48) .. (255,141) .. controls (255,146.52) and (250.52,151) .. (245,151) .. controls (239.48,151) and (235,146.52) .. (235,141) -- cycle ;
\draw  [fill={rgb, 255:red, 0; green, 0; blue, 0 }  ,fill opacity=1 ] (335,41) .. controls (335,35.48) and (339.48,31) .. (345,31) .. controls (350.52,31) and (355,35.48) .. (355,41) .. controls (355,46.52) and (350.52,51) .. (345,51) .. controls (339.48,51) and (335,46.52) .. (335,41) -- cycle ;
\draw  [fill={rgb, 255:red, 0; green, 0; blue, 0 }  ,fill opacity=1 ] (335,141) .. controls (335,135.48) and (339.48,131) .. (345,131) .. controls (350.52,131) and (355,135.48) .. (355,141) .. controls (355,146.52) and (350.52,151) .. (345,151) .. controls (339.48,151) and (335,146.52) .. (335,141) -- cycle ;
\draw  [fill={rgb, 255:red, 0; green, 0; blue, 0 }  ,fill opacity=1 ] (435,41) .. controls (435,35.48) and (439.48,31) .. (445,31) .. controls (450.52,31) and (455,35.48) .. (455,41) .. controls (455,46.52) and (450.52,51) .. (445,51) .. controls (439.48,51) and (435,46.52) .. (435,41) -- cycle ;
\draw  [fill={rgb, 255:red, 0; green, 0; blue, 0 }  ,fill opacity=1 ] (435,141) .. controls (435,135.48) and (439.48,131) .. (445,131) .. controls (450.52,131) and (455,135.48) .. (455,141) .. controls (455,146.52) and (450.52,151) .. (445,151) .. controls (439.48,151) and (435,146.52) .. (435,141) -- cycle ;
\draw  [fill={rgb, 255:red, 0; green, 0; blue, 0 }  ,fill opacity=1 ] (535,41) .. controls (535,35.48) and (539.48,31) .. (545,31) .. controls (550.52,31) and (555,35.48) .. (555,41) .. controls (555,46.52) and (550.52,51) .. (545,51) .. controls (539.48,51) and (535,46.52) .. (535,41) -- cycle ;
\draw  [fill={rgb, 255:red, 0; green, 0; blue, 0 }  ,fill opacity=1 ] (535,141) .. controls (535,135.48) and (539.48,131) .. (545,131) .. controls (550.52,131) and (555,135.48) .. (555,141) .. controls (555,146.52) and (550.52,151) .. (545,151) .. controls (539.48,151) and (535,146.52) .. (535,141) -- cycle ;

\draw (78,15.4) node [anchor=north west][inner sep=0.75pt]  [color={rgb, 255:red, 255; green, 0; blue, 0 }  ,opacity=1 ]  {$e_{1}$};
\draw (486,14.4-5) node [anchor=north west][inner sep=0.75pt]  [color={rgb, 255:red, 245; green, 166; blue, 35 }  ,opacity=1 ]  {$d_{1}$};
\draw (80,147.4) node [anchor=north west][inner sep=0.75pt]  [color={rgb, 255:red, 189; green, 16; blue, 224 }  ,opacity=1 ]  {$c_{2}$};
\draw (327,65.4) node [anchor=north west][inner sep=0.75pt]  [color={rgb, 255:red, 189; green, 16; blue, 224 }  ,opacity=1 ]  {$c_{2}$};
\draw (124,64.4) node [anchor=north west][inner sep=0.75pt]  [color={rgb, 255:red, 139; green, 87; blue, 42 }  ,opacity=1 ]  {$e_{2}$};
\draw (525,66.4) node [anchor=north west][inner sep=0.75pt]  [color={rgb, 255:red, 65; green, 117; blue, 5 }  ,opacity=1 ]  {$d_{2}$};
\draw (123,98.4) node [anchor=north west][inner sep=0.75pt]  [color={rgb, 255:red, 0; green, 23; blue, 255 }  ,opacity=1 ]  {$c_{1}$};
\draw (282,16.4) node [anchor=north west][inner sep=0.75pt]  [color={rgb, 255:red, 0; green, 23; blue, 255 }  ,opacity=1 ]  {$c_{1}$};
\draw (278,148.4) node [anchor=north west][inner sep=0.75pt]  [color={rgb, 255:red, 65; green, 117; blue, 5 }  ,opacity=1 ]  {$d_{2}$};
\draw (326,97.4) node [anchor=north west][inner sep=0.75pt]  [color={rgb, 255:red, 0; green, 23; blue, 255 }  ,opacity=1 ]  {$\textcolor[rgb]{0.96,0.65,0.14}{d_{1}}$};
\draw (490,148.4) node [anchor=north west][inner sep=0.75pt]  [color={rgb, 255:red, 139; green, 87; blue, 42 }  ,opacity=1 ]  {$e_{2}$};
\draw (524,98.4) node [anchor=north west][inner sep=0.75pt]  [color={rgb, 255:red, 255; green, 0; blue, 0 }  ,opacity=1 ]  {$e_{1}$};

\draw (35,10.4) node [anchor=north west][inner sep=0.75pt]    {$u_{1}$};
\draw (36,155.4) node [anchor=north west][inner sep=0.75pt]    {$u_{2}$};
\draw (239,11.34) node [anchor=north west][inner sep=0.75pt]    {$v_{1}$};
\draw (238,153.34) node [anchor=north west][inner sep=0.75pt]    {$v_{2}$};
\draw (437,10.34) node [anchor=north west][inner sep=0.75pt]    {$w_{1}$};
\draw (439,154.34) node [anchor=north west][inner sep=0.75pt]    {$w_{2}$};
\draw (136,11.34) node [anchor=north west][inner sep=0.75pt]    {$u_{3}$};
\draw (137,156.34) node [anchor=north west][inner sep=0.75pt]    {$u_{4}$};
\draw (336,10.34) node [anchor=north west][inner sep=0.75pt]    {$v_{3}$};
\draw (337,153.34) node [anchor=north west][inner sep=0.75pt]    {$v_{4}$};
\draw (537,11.34) node [anchor=north west][inner sep=0.75pt]    {$w_{3}$};
\draw (539,155.34) node [anchor=north west][inner sep=0.75pt]    {$w_{4}$};

\draw (189,13.4) node [anchor=north west][inner sep=0.75pt]  [color={rgb, 255:red, 155; green, 155; blue, 155 }  ,opacity=1 ]  {$P_{1}$};
\draw (186,151.4) node [anchor=north west][inner sep=0.75pt]  [color={rgb, 255:red, 155; green, 155; blue, 155 }  ,opacity=1 ]  {$P_{2}$};
\draw (386,12.34) node [anchor=north west][inner sep=0.75pt]  [color={rgb, 255:red, 155; green, 155; blue, 155 }  ,opacity=1 ]  {$P_{3}$};
\draw (383,150.34) node [anchor=north west][inner sep=0.75pt]  [color={rgb, 255:red, 155; green, 155; blue, 155 }  ,opacity=1 ]  {$P_{4}$};

\end{tikzpicture}
\caption{The Hamilton router of order $2$ we find for Lemma~\ref{lem:comparators exist}. All edges are directed from left to right. Note that the six horizontal edges have the same colours as the six diagonal edges; combined with $P_1,P_2,P_3,P_4$ these two sets of edges respectively give the paths $Q_{1,3},Q_{2,4}$ and $Q_{1,4},Q_{2,3}$.}
\label{fig:colouredcomparator}
\end{figure}

Indeed, the vertex-disjoint paths $Q_{1,3}:=u_1u_3P_1v_1v_3P_3w_1w_3$ and $Q_{2,4}:=u_2u_4P_2v_2v_4P_4w_2w_4$ are together rainbow, as are the vertex-disjoint paths $Q_{1,4}:=u_1u_4P_2v_2v_3P_3w_1w_4$ and $Q_{2,3}:=u_2u_3P_1v_1v_4P_4w_2w_3$, and we have that $V(Q_{1,4}) \cup V(Q_{2,3})=V(Q_{1,3}) \cup V(Q_{2,4})=V(\mathcal{C})$, $C(Q_{1,4}) \cup C(Q_{2,3})=C(Q_{1,3}) \cup C(Q_{2,4})=C(\mathcal{C})$, and, for each $i\in \{1,2\}$, $j\in\{3,4\}$ $Q_{i,j}$ is a $u_i,w_j$-path of length $9$. This yields a $\{u_1, u_2\}, \{w_3, w_4\}, C(\mathcal{C})$-Hamilton-router of depth 9 which avoids the vertices and colours in $\mathcal{J}$.
\end{proof}


\subsection{Hamilton routers}\label{sec:routers}

Here we build Hamilton routers of order $n$ by gluing together comparators (as depicted in Figure~\ref{fig:activated}).

\begin{defn}\label{defn:basic hamilton routers}
    We call an $A,B$-Hamilton-router $S$ a \textit{basic Hamilton router of order $n$} if $S$ can be constructed as follows. Start with a vertex set $V$ defined on a $4\times n$ grid, that is, $V:=\{v_{i,j}\,\colon (i,j)\in [4]\times [n]\}$. Let $A=\{v_{1,j}\,\colon j\in  [n]\}$ and $B=\{v_{4,j}\,\colon j\in  [n]\}$.
    \begin{itemize}
        \item For every odd $i\in [n-1]$, and for $A'=\{v_{1,i},v_{1,i+1}\}$ and $B'=\{v_{2,i},v_{2,i+1}\}$, add a graph $S^{\textrm{odd}}_i$ which is an $A',B'$-comparator of depth $9$.
        \item For every even $i\in [n-1]$, and for $A'=\{v_{3,i},v_{3,i+1}\}$ and $B'=\{v_{4,i},v_{4,i+1}\}$, add a graph $S^{\textrm{even}}_i$ which is an $A',B'$-comparator of depth $9$.
        \item For every $i\in [n]$, add a path from $v_{2,i}$ to $v_{3,i}$ of length $3$.
        \item Add an edge from $v_{3,1}$ to $v_{4,1}$. If $n$ is even, add an edge from $v_{3,n}$ to $v_{4,n}$, and otherwise add an edge from $v_{1,n}$ to $v_{2,n}$.
    \end{itemize}
\end{defn}

\begin{figure}
    \centering

\begin{tikzpicture}[
    x=1.7cm,
    y=1.25cm,
    straight/.style={line width=1.45pt, line cap=round},
    thinline/.style={line width=0.75pt, line cap=round,black!50},
    wavy/.style={
        line width=1.45pt,
        decorate,
        decoration={snake, amplitude=1.5pt, segment length=7pt},
        line cap=round
    },
    blackvertex/.style={circle, fill=black, draw=black, inner sep=0pt, minimum size=8pt},
    redvertex/.style={circle, fill=red!90!black, draw=none, inner sep=0pt, minimum size=9pt},
    bluevertex/.style={circle, fill=blue!65, draw=none, inner sep=0pt, minimum size=9pt},
    greenvertex/.style={circle, fill=green!70!black, draw=none, inner sep=0pt, minimum size=9pt},
    purplevertex/.style={circle, fill=violet, draw=none, inner sep=0pt, minimum size=9pt}
]

\coordinate (L1) at (0,3);
\coordinate (L2) at (0,2);
\coordinate (L3) at (0,1);
\coordinate (L4) at (0,0);

\coordinate (A1) at (0.8,3);
\coordinate (A2) at (0.8,2);
\coordinate (A3) at (0.8,1);
\coordinate (A4) at (0.8,0);

\coordinate (B1) at (1.6,3);
\coordinate (B2) at (1.6,2);
\coordinate (B3) at (1.6,1);
\coordinate (B4) at (1.6,0);

\coordinate (R1) at (2.4,3);
\coordinate (R2) at (2.4,2);
\coordinate (R3) at (2.4,1);
\coordinate (R4) at (2.4,0);

\draw[straight] (L1) -- (A1);
\draw[straight] (L2) -- (A2);
\draw[thinline] (L1) -- (A2); 
\draw[thinline] (L2) -- (A1); 

\draw[thinline] (L3) -- (A3); 
\draw[thinline] (L4) -- (A4); 
\draw[straight] (L3) -- (A4);
\draw[straight] (L4) -- (A3);

\draw[wavy] (A1) -- (B1);
\draw[wavy] (A2) -- (B2);
\draw[wavy] (A3) -- (B3);
\draw[wavy] (A4) -- (B4);

\draw[straight] (B1) -- (R1);
\draw[straight] (B4) -- (R4);

\draw[straight] (B2) -- (R2); 
\draw[straight] (B3) -- (R3); 
\draw[thinline] (B2) -- (R3); 
\draw[thinline] (B3) -- (R2); 

\node[redvertex]    at (L1) {};
\node[bluevertex]   at (L2) {};
\node[greenvertex]  at (L3) {};
\node[purplevertex] at (L4) {};

\node[greenvertex]  at (R1) {};
\node[redvertex]    at (R2) {};
\node[bluevertex]   at (R3) {};
\node[purplevertex] at (R4) {};

\node[blackvertex] at (A1) {};
\node[blackvertex] at (A2) {};
\node[blackvertex] at (A3) {};
\node[blackvertex] at (A4) {};

\node[blackvertex] at (B1) {};
\node[blackvertex] at (B2) {};
\node[blackvertex] at (B3) {};
\node[blackvertex] at (B4) {};

\node[anchor=east] at (-0.18,3) {$1$};
\node[anchor=east] at (-0.18,2) {$4$};
\node[anchor=east] at (-0.18,1) {$2$};
\node[anchor=east] at (-0.18,0) {$3$};

\node[anchor=west] at (2.58,3) {$2$};
\node[anchor=west] at (2.58,2) {$1$};
\node[anchor=west] at (2.58,1) {$4$};
\node[anchor=west] at (2.58,0) {$3$};

\end{tikzpicture}
    \caption{A basic Hamilton router of order $4$ where (for simplicity) the comparators are depicted as $4$-cycles (and all edges are directed from left to right). $A$ and $B$ are the leftmost and rightmost columns of vertices, respectively. An example bijection between $A$ and $B$ is indicated by the colour/number of the vertices. Choices of path systems for each comparator are indicated with thick lines, chosen so that identifying the vertex pairs of the same colour/number forms a cycle in the thick lines (passing through $1,2,3,4$ in order). }
    \label{fig:activated}
\end{figure}

\begin{prop}\label{prop:routers are good} Basic Hamilton routers are well defined, that is, every (directed) graph constructed as in Definition~\ref{defn:basic hamilton routers} is an $A,B$-Hamilton-router.
\end{prop}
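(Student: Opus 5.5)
The plan is to reduce the statement to a purely combinatorial fact about permutations and then prove that fact by a greedy merging argument. Let $A=\{v_{1,j}\}$ and $B=\{v_{4,j}\}$ be indexed by $[n]$.

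First, I would check that \emph{any} choice of states for the comparators gives a path partition of $V(S)$. Each comparator $S^{\textrm{odd}}_i$ and $S^{\textrm{even}}_i$ can be set, as in Remark~1, to one of two states. The \emph{straight} state links $v_{1,i}\to v_{2,i}$ and $v_{1,i+1}\to v_{2,i+1}$ (respectively $v_{3,i}\to v_{4,i}$ and $v_{3,i+1}\to v_{4,i+1}$). The \emph{crossed} state links them the other way round. In either state the two paths partition the comparator's vertex set. Every column $i$ contains the length-$3$ path $v_{2,i}\to v_{3,i}$. The column-$1$ vertices $v_{3,1},v_{4,1}$, and the column-$n$ vertices left uncovered by comparators (rows $3,4$ if $n$ is even, rows $1,2$ if $n$ is odd), are joined by the extra single edges. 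Hence rows $1\to2$ and rows $3\to4$ are each perfectly matched by comparator paths or single edges. Concatenating these gives $n$ vertex-disjoint directed paths covering $V(S)$, from $v_{1,j}$ to $v_{4,\sigma(j)}$. Here $\sigma=\tau_{\mathrm{even}}\circ\tau_{\mathrm{odd}}$, where $\tau_{\mathrm{odd}}$ is the product of the transpositions $(i,i+1)$, $i$ odd, whose comparators are crossed, and $\tau_{\mathrm{even}}$ is defined likewise for even $i$. I would verify that disjointness of comparators, internal paths and single edges is built into Definition~\ref{defn:basic hamilton routers}.

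Next, fix a bijection $\phi\colon A\to B$ and write $\psi\in S_n$ for the permutation with $\phi(v_{1,j})=v_{4,\psi(j)}$. Following the path from $v_{1,j}$ to $v_{4,\sigma(j)}$ and then the matching edge back to $v_{1,\psi^{-1}\sigma(j)}$, the union $E(\mathcal{P}_\phi)\cup M_\phi$ is a Hamilton cycle exactly when $\pi:=\psi^{-1}\sigma$ is a single $n$-cycle. So it suffices to prove the following: for every $\psi'\in S_n$ one can choose a subset $T$ of the transpositions $\{(i,i+1)\colon i\in[n-1]\}$ such that $\psi'\,\tau_{\mathrm{even}}\tau_{\mathrm{odd}}$ is an $n$-cycle. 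Here $\tau_{\mathrm{even}}$ and $\tau_{\mathrm{odd}}$ are the products of the chosen even and odd transpositions; within each layer the transpositions are disjoint and so commute.

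To prove this, I would use the standard fact that multiplying a permutation (on either side) by a transposition $(x,y)$ merges the cycles of $x$ and $y$ if they are distinct, and splits the cycle if they coincide. Start with $\pi_0=\psi'$. Process the even transpositions one at a time and then the odd ones, multiplying on the right in that order, which reproduces $\psi'\tau_{\mathrm{even}}\tau_{\mathrm{odd}}$. At the step for $(i,i+1)$, include it if and only if $i$ and $i+1$ currently lie in different cycles. With this rule every performed multiplication is a merge, so cycles are never split. After the step for $(i,i+1)$, the elements $i$ and $i+1$ lie in a common cycle, and this persists to the end because later steps only merge. Since the two layers together contain every pair $(i,i+1)$ with $i\in[n-1]$, the final permutation has $1,2,\dots,n$ all in one cycle, i.e.\ it is an $n$-cycle. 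Setting the comparators accordingly gives the required $\mathcal{P}_\phi$.

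The main obstacle I expect is the bookkeeping rather than any real difficulty. One must get the composition order and direction conventions right: the paths run from $A$ to $B$, $M_\phi$ runs from $B$ to $A$, and one has to decide whether the resulting cycle is governed by $\psi^{-1}\sigma$ or $\sigma\psi^{-1}$. Since these are conjugate-equivalent for cycle type, either works. One must also check the boundary columns for both parities of $n$. The directed case is identical, since every piece is oriented from lower to higher rows.
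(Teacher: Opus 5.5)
Your proof is correct and follows the paper's idea. You start from the all-straight path partition, whose union with $M_\phi$ splits into cycles, and toggle a comparator only when its two paths lie on different cycles; every toggle then merges two cycles, and the fact that the comparators link columns $1-2-\cdots-n$ in a path forces a single cycle at the end. The paper chooses which comparators to toggle via a spanning tree of the graph obtained by contracting the column sets of the initial cycles, and it argues directly with paths and cycles in the graph. Your layer-by-layer greedy (even layer first, so each toggle is a right multiplication) is a Kruskal-style choice of that same tree, phrased in permutation language.
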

\begin{proof}
\par Let $S$ be the graph described in Definition~\ref{defn:basic hamilton routers}. Take any bijection $\phi\colon A\to B$, and let $M_{\phi}$ be the corresponding perfect matching from $B$ to $A$. First observe that $S$ contains a path partition $\mathcal{P}_\mathrm{id}$ of $n$ paths where, for every $(v_{1,j}, v_{4,j})$, $j \in [n]$, there is a path with those endpoints. This is simply by invoking the definition of each of the component comparators with the bijection that keeps the second subscript of the vertices the same. Observe that $\mathcal{P}_\mathrm{id}\cup M_{\phi}$ is a collection of disjoint cycles partitioning $V(S)$. If there is exactly one cycle, then we have the desired Hamilton cycle. So let us suppose that we have cycles $C_1,\ldots, C_k$ for $k\geq 2$. For each $i\in [k]$, let $I(C_i)\subseteq [n]$ denote the set of $j\in [n]$ such that there is some $\ell\in [4]$ with $v_{\ell, j}\in V(C_i)$ --- these are simply the second subscripts (or the $y$-coordinates along the grid) the cycle $C_i$ travels through. Thus, the sets $I(C_i)$, $i\in [k]$, partition $[n]$. We argue that there is a subset of comparators whose path partitions we can `toggle' to merge all of these $k$-many cycles into a Hamilton cycle.
\par Consider an auxiliary graph $G$ on $[n]$ with an edge $\{i,j\}$ if, for some $\ell\in [4]$, there is an $A',B'$-comparator of $A'=\{v_{\ell,i},v_{\ell,j}\}$ and $B'=\{v_{\ell+1,i},v_{\ell+1,j}\}$. Observe that this auxiliary graph is simply a path $1-2-3-\cdots - n$ by our construction. In particular, $G$ is connected. Now we consider the graph $G'$ obtained from $G$ by contracting $I(C_i)$ to a single vertex for each $i \in [k]$. The graph $G'$ must also be connected, as vertex contractions preserve connectivity. Take a spanning tree $T$ of $G'$. Note that each edge of $G'$ corresponds to a set of edges in $G$, and no edge in $G$ appears in more than one of these sets (as this would imply that there are at least two indices $i,j \in [k]$ for which some vertex from $V(G)$ is in $C_i$ and $C_j$, contradicting that these cycles are vertex disjoint). So, for each edge $e$ of $T$, we can pick arbitrarily a distinct edge $e_G$ of $G$ that contracts to $e$ in $G'$. Note that the edges $e_G$, $e\in E(T)$, correspond to a set of $e(T)$ distinct comparators. 

Now, by toggling the path partitions for a comparator corresponding to an edge $e_G$, we can merge the two cycles that travel through the endpoints of $e_G$. Doing this for all such $e_G$, $e\in E(T)$, therefore yields the desired Hamilton cycle. \end{proof}


\subsection{Routers in typical coloured digraphs}
We can now combine the existence of efficient routing networks (Proposition~\ref{prop:routers are good}), comparators (Lemma~\ref{lem:comparators exist}), and short connecting paths (Lemma~\ref{Connecting_lemma}) to prove the following key result.
\begin{lemma}\label{lem: routers in digraphs}
    Let $1/n, \, \eps\ll 1$, let $\eps \llpoly  q \llpoly w \llpoly p\leq 1$, and let $D$ be a properly coloured $(n,\eps)$-typical digraph. Let $X$ be a $p$-random subset of $V(D)$ and let $C$ be a $p$-random subset of $C(D)$, sampled independently. Then, with high probability, the following holds.
    \par Let $A,B\subseteq V(D)$ be disjoint subsets of size $qn$, and let $\mathcal{J}$ be a forbidden set of at most $w n$ vertices and colours which avoids $A \cup B$.  Let $m\in \mathbb{N}$ with $40qn\leq m\leq 100qn$. Then, there exists an $A,B,C'$-Hamilton-router $S$ with $V(S)\subseteq A\cup B\cup (X\setminus \mathcal{J})$, $C'\subseteq C\setminus \mathcal{J}$, and  $|V(S)\setminus (A\cup B)|=m$.
\end{lemma}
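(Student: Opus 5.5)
The plan is to realise a basic Hamilton router of order $N:=qn$ (Definition~\ref{defn:basic hamilton routers}), with rows $1$ and $4$ of the grid identified with $A$ and $B$. By Proposition~\ref{prop:routers are good} the result is an $A,B$-Hamilton-router, and it suffices to make it coloured and rainbow with everything else inside $X$ and colours inside $C$, avoiding $\mathcal{J}$. The difficulty is that $A,B$ are arbitrary (not inside $X$) and that $D$ restricted to $X$ and $C$ is far from complete. So I would not place comparators directly on $A,B$. Instead, each comparator is found with its own terminals inside $X$, and every edge or "path" of the abstract router is replaced by short rainbow connecting paths. Subdividing the abstract edges incident to rows $1$ and $4$ preserves the router property, since the proof of Proposition~\ref{prop:routers are good} only uses the combinatorics of the comparators and the paths between rows.

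\textbf{Step 1 (comparators, before sampling).} Apply Lemma~\ref{lem:comparators exist} greedily in $D$, adding all used vertices and colours to the forbidden set each time. This gives a family $\mathcal{F}$ of $n/10^{12}$ comparators of depth $9$ that are pairwise vertex- and colour-disjoint, each with $20$ vertices and at most $18$ colours. Since the comparators are disjoint, the events "comparator $F$ has all its vertices in $X$ and all its colours in $C$" are independent with probability at least $p^{38}$. By Lemma~\ref{chernoff-bound}, with high probability at least $p^{38}n/10^{13}$ comparators of $\mathcal{F}$ survive. A forbidden set of size at most $wn$ plus later usage of $O(qn)$ elements destroys at most $O(wn+qn)\ll p^{38}n$ of them, because the comparators are disjoint. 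Hence $N$ usable comparators always remain.

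\textbf{Step 2 (random connecting lemma).} I would prove that, with high probability, for all distinct $x,y\in V(D)$ and every forbidden set $\mathcal{J}'$ of at most $\sqrt{w}\,n$ vertices and colours, there is a rainbow $x,y$-path of length $3$ (and, by prepending one edge, also one of length $4$) with internal vertices in $X\setminus\mathcal{J}'$ and colours in $C\setminus\mathcal{J}'$. Typicality gives $\Omega(n^2)$ rainbow $x,y$-paths of length $3$ in $D$ (argue as in Lemma~\ref{Connecting_lemma}). Each survives the sampling with probability $p^5$, and since each vertex or colour lies in only $O(n)$ of these paths, a bounded-differences or Janson/Chernoff estimate with a union bound over the $n^2$ pairs shows that $\Omega(p^5n^2)$ survive. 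A forbidden element kills only $O(n)$ paths, so $\sqrt{w}\ll p^5$ suffices. I expect this step to be the main technical obstacle: one has to handle the dependencies carefully, because a single colour occurs on many paths.

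\textbf{Step 3 (assembly and exact size).} Assign the $N$ comparators to the comparator slots of Definition~\ref{defn:basic hamilton routers}. Then connect, in turn, each $A$-vertex to the corresponding comparator input or row-$2$ grid position, each comparator output to the next stage (the row-$2$-to-row-$3$ links), and the last stage to $B$. Each connection uses Step 2, with the forbidden set enlarged by everything used so far; the total usage is $O(qn)\le \sqrt{w}\,n$. This yields roughly $20N$ comparator vertices plus $O(N)$ connecting vertices, say at most $35N<40qn\le m$. To reach exactly $m$, I lengthen the row-$2$-to-row-$3$ links by concatenating further connecting paths through fresh vertices of $X$, using length-$3$ and length-$4$ pieces so that every integer increment is attainable. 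The total usage stays at most $100qn+O(qn)\ll \sqrt{w}\,n$, so Step 2 still applies. Finally, every path partition uses all edges of the links and exactly the colour set of each comparator, so the colour set $C'\subseteq C\setminus\mathcal{J}$ is independent of $\phi$, as the definition of a coloured router requires.
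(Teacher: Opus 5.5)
Your proposal is correct and takes essentially the same route as the paper. Both find disjoint comparators greedily in $D$ and sample them via Chernoff, use abundant, forbidden-set-robust length-$3$ connectors, embed a basic router inside $X$ with its own terminals and join it to $A$ and $B$ by length-$3$ paths, and lengthen links to reach exactly $m$. The step you flag as the main obstacle is simpler in the paper: for each pair $x,y$ it applies Lemma~\ref{Connecting_lemma} repeatedly to obtain $\Omega(n)$ colour-disjoint, internally vertex-disjoint connectors, whose survival events are independent, so plain Chernoff suffices. Your $\Omega(n^2)$-paths plus bounded-differences argument also works.
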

\begin{proof}
    By invoking Lemma~\ref{lem:comparators exist} repeatedly, we can find a family $\mathcal{F}$ in $D$ of vertex- and colour-disjoint comparators of depth $9$ with $|\mathcal{F}|\geq n/10^{13}$. Similarly, by Lemma~\ref{Connecting_lemma}, for each distinct $x,y\in V(D)$ we can find a family $\mathcal{F}_{xy}$ of rainbow $x,y$-paths of length $3$ that are colour-disjoint and internally vertex-disjoint  with $|\mathcal{F}_{xy}|\geq n/10^{5}$. Consider the $p$-random sets $X$ and $C$. By Chernoff's bound (Lemma~\ref{chernoff-bound}), with probability at least $1-e^{-\sqrt{n}}$, there are at least $p^{100}n/10^{15}$ members of $\mathcal{F}$ whose vertices are in $X$ and colours are in $C$. Similarly, by Chernoff's bound, for each distinct $x,y\in V(D)$, with probability at least $1-e^{-\sqrt{n}}$, there exist at least $p^{10}n/10^{6}$ members of $\mathcal{F}_{xy}$ whose internal vertices are in $X$ and colours are in $C$. By a union bound, all of the $n(n-1)+1$ events mentioned hold together with high probability. 
    Fix, then, some choice of $X$ and $C$ for which these events all hold.
    \par
    Let $A, B\subset V(D)$ be disjoint subsets of size $qn$ and let $\mathcal{J}$ have size at most $wn$. Let $S'$ be some basic (uncoloured, directed) $X_A,X_B$-Hamilton-router of order $qn$, whose existence is guaranteed by Proposition~\ref{prop:routers are good}, for some disjoint $X_A, X_B \subseteq X \setminus (\mathcal{J} \cup A \cup B)$ and observe that $S'$ has $4qn+2qn+(qn-1)(2\cdot 8)=22qn-16$ vertices.
    Take an arbitrary connecting path of length $3$ in $S'$ and subdivide it the appropriate number of times to obtain $\bar{S}$, which is an $X_A,X_B$-Hamilton-router of order $qn$ with precisely $m-4qn$ vertices. (It is clear that subdividing a connecting path does not break the Hamilton router property, and requires subdividing $m-26qn+16 \leq 75qn$ times.) We will now `embed' $\bar{S}$ on $D[X,C]$, disjointly from $\mathcal{J} \cup A \cup B$, mapping the comparators in $\bar{S}$ to comparators in $D$ and embedding the paths between them, so that the embedding of $\bar{S}$ constitutes a coloured Hamilton router, i.e., an $X_A,X_B,\bar{C}$-Hamilton-router for some $\bar{C}\subseteq C\setminus \mathcal{J}$.
    \par As $p^{100}n/10^{15} - wn - 2qn\gg qn$, we may arbitrarily embed the $(qn-1)$ comparators in $\bar{S}$ using coloured comparators in $D[X,C]$ that are vertex- and colour-disjoint from $\mathcal{J} \cup A \cup B$. We may then easily extend the embedding to include the disjoint edges $v_{3,1}v_{4,1}$ and either $v_{3,n}v_{4,n}$ or $v_{1,n}v_{2,n}$ as appropriate (depending on the parity of $qn$). It remains to find the appropriate connecting paths between the various already embedded comparators, edges and vertices in $A \cup B$. We can achieve this by iteratively invoking the guaranteed number of appropriate connecting paths of length $3$, which exist between every pair of vertices in $D$, as $p^{10}n/10^{6}- 2wn \gg 3qn$. To embed the unique connecting path in $\bar{S}$ of length $\ell := m-26qn+19\leq 100qn$, we may start by finding a rainbow path of length $\ell-3$ starting from the corresponding endpoint (this can be achieved greedily, avoiding the used colours and vertices as well as those in $\mathcal{J}$, as $D[X,C]$ has minimum degree at least $p^{10}n/10^{6}\gg 3wn$), and then invoking the abundance of remaining colour- and internally-vertex-disjoint connecting paths of length $3$ one last time. Let $\phi$ be the resulting embedding of $\bar{S}$.
    \par Then, we can extend $\phi$ to find the desired $A,B, C'$-Hamilton router $S$ of order $qn$ as follows. We shall pair up the vertices in $A$ and $X_A$ and the vertices in $X_B$ and $B$, and find vertex- and colour-disjoint rainbow paths of length three between each of the pairs, such that the colours are all distinct and the set of colours used, $\hat{C}$ say, is a subset of colours from $C \setminus (\mathcal{J} \cup C(\phi(\bar{S})))$, and the internal vertices are all distinct and the set of internal vertices used, $\hat{V}$ say, lies in $X \setminus (\mathcal{J} \cup V(\phi(\bar{S})))$. Setting $C'={C(\phi(\bar{S}))} \cup \hat{C}$ and $V(S)=A\cup B\cup V(\phi(\bar{S})) \cup \hat{V}$ yields our desired $A,B,C'$-Hamilton router.
\end{proof}


\section{Putting everything together to prove Theorem~\ref{thm:furthermore}}\label{sec:puttingtogether}

Our goal now is to prove Theorem~\ref{thm:furthermore}, stated below in a form that aids with the stability analysis performed in the next section to prove Theorem~\ref{thm:mainintro}. To prove Theorem~\ref{thm:furthermore}, we use the outline in Section~\ref{sec:overview}. In that outline, we took a digraph $\bar{D}$ and, setting aside some Hamilton-router, randomly partitioned its remaining vertices as $V_1\cup \dots\cup V_k$ for some $k$ and remaining colours as $C_1\cup\dots\cup C_{k-1}$. In order to use Theorem~\ref{thm:BPW}, we wish to have $k=O(1)$, and thus we will use a Hamilton router of order $\Omega(n)$. There is, however, a problem. Essentially, by Lemma~\ref{lem: routers in digraphs}, we can find such a router but this will disrupt the randomness of any resulting partition with an error term of $\Omega(1)$, whereas to apply Theorem~\ref{thm:BPW} we can only tolerate an error term $\eps\ll \log^{-1}n$.

To get around this, instead, we will partition the vertices/colours outside of the router into twice as many sets, $V_1\cup \dots\cup V_k\cup V_1'\cup\dots \cup V_k'$ and $C_1\cup \dots\cup C_{k-1}\cup C_1'\cup\dots \cup C_{k-1}'$, respectively (see \eqref{eq:vxpartition} and \eqref{eq:colpartition}). Then, $V_1,\dots,V_k,C_1,\dots,C_{k-1}$ can be random sets, while $V_1',\dots,V'_k,C'_1,\dots,C_{k-1}'$ incorporate the disruption from finding the router. Using Theorem~\ref{thm:RSBcoveringstephyper}, we can then find large rainbow matchings in $\bar{D}[V'_i,V_{i+1}',C'_i]$ for each $i\in [k-1]$. The union of these matchings forms rainbow paths of length $k-1$. We can then distribute the remaining vertices among the sets $V_1,\dots,V_k$. Afterwards, using Theorem~\ref{thm:BPW}, we can find very large rainbow matchings in $\bar{D}[V_i,V_{i+1},C_i]$ for each $i\in [k-1]$. Finally, for example in case \ref{case:cycle} of Theorem~\ref{thm:furthermore}, the union of all of these matchings will then be a set of rainbow paths that we can connect together into a rainbow Hamilton cycle using the router.
Let us recap a key point: while the matchings found via Theorem~\ref{thm:RSBcoveringstephyper} are not as large, the theorem tolerates a larger deviation from randomly chosen sets and produces large enough matchings to bring our error term within the scope of Theorem~\ref{thm:BPW}.

\begin{theorem}\label{thm:furthermore}
Let $1/n, \, \eps\llpoly\log^{-1}n$, and let $D$ be a properly coloured $(n,\eps)$-typical digraph\Footnote{Recall Definition~\ref{defn:digraph}.} with $n$ vertices and $m \geq n-1$ colours.
Then, the following hold.
\begin{enumerate}[label = {({\arabic{enumi}}})]
    \item\label{case:n-1} If $m=n-1$, then $D$ contains a directed rainbow path on $n-1$ vertices.
    \item\label{case:n} If $m \geq n$, then $D$ contains a directed rainbow Hamilton path.
    \item\label{case:cycle} For all $m \geq n-1$ and $V' \subseteq V(D)$ with $|V'|= \min\{n, m-100\}$, $D$ contains a directed rainbow cycle covering $V'$.
\end{enumerate}
Furthermore, for each distinct $x,y\in V(D)$, if we add $xy$ to $D$ with a colour not in $C(D)$, we can find in $D$ a rainbow Hamilton path using $xy$ and a rainbow directed cycle of length $n-100$ using $xy$.
\end{theorem}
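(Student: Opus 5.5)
We follow the outline given just before the statement. Fix constants $\eps \llpoly \log^{-1} n$, $1/n \ll \eta \llpoly q \llpoly w \llpoly p \llpoly 1/k$, with $k$ a large constant. Take a $p$-random reservoir $X\subseteq V(D)$ and colour set $C\subseteq C(D)$ for Lemma~\ref{lem: routers in digraphs}. The remaining vertices and colours are randomly partitioned as
\[
V_1\cup\dots\cup V_k\cup V_1'\cup\dots\cup V_k'
\quad\text{and}\quad
C_1\cup\dots\cup C_{k-1}\cup C_1'\cup\dots\cup C_{k-1}'.
\]
The unprimed parts are random with densities about $(1-p')/k$. The primed parts are small, of density comparable to $p$, and they absorb $X$ and $C$. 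Concretely, $V_1'\cup V_k'\supseteq X$, or more generally $X$ is spread over the primed parts, so that each primed part is a union of a random set and part of $X$.

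With high probability we get three things simultaneously:
\begin{itemize}
\item the conclusion of Lemma~\ref{lem: routers in digraphs} for $(X,C)$;
\item the conclusion of Theorem~\ref{thm:RSBcoveringstephyper} for the typical hypergraphs $\mathcal{H}(D[V(D),V(D),C(D)])$, with the random sets $V_i',V_{i+1}',C_i'$ (the dependencies are allowed);
\item the conclusion of Theorem~\ref{thm:BPW} for each triple $(V_i,V_{i+1},C_i)$ with respect to perturbations of size $\eps' n$, where $\eps'\llpoly \log^{-1}n$ but $\eps'\gg\eta$-errors are not tolerated.
\end{itemize}
The hypergraph $\mathcal{H}$ is typical because $D$ is nearly complete and properly coloured, so codegrees are automatically $(1\pm O(\eps))n$.

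\textbf{Step 1 (router).} Choose sets $A\subseteq V_1'$ and $B\subseteq V_k'$ of size $qn$. Apply Lemma~\ref{lem: routers in digraphs} with $\mathcal{J}$ empty, or containing $x,y$ and the special colour for the ``furthermore'' part, to get an $A,B,C^*$-Hamilton-router $S$. Its internal vertices lie in $X$ and its colours lie in $C$. The freedom in $m\in[40qn,100qn]$ lets us tune $|V(S)|$, and hence the sizes of what is left, to exact divisibility requirements.

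\textbf{Step 2 (nibble).} Remove $V(S)$ and $C^*$ from the primed sets. Apply Theorem~\ref{thm:RSBcoveringstephyper} to find nearly perfect rainbow matchings in $D[\bar V_i',\bar V_{i+1}',\bar C_i']$, using disjoint colours for different $i$. The sets $\bar{\cdot}$ are chosen to have equal size $q'n$ and to contain the leftover primed vertices. The union of these matchings is a family of about $q'n$ rainbow paths from $V_1'$ to $V_k'$ with only $\eta n$ uncovered vertices and colours. We make the paths start in $B$ and end in $A$, so that the router closes them up.

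\textbf{Step 3 (absorbing the defect).} The uncovered primed vertices and colours, $O(\eta n)\ll\eps' n$ of them, are added to the random sets $V_i,C_i$. Path endpoints are re-threaded: each partial path's ends are appended into $V_1$ and $V_k$, or the roles of layers are shifted, so that the collection of path ends lies in $V_1$ and $V_k$. The bookkeeping is arranged so that $|V_1|=\dots=|V_k|$ up to the permitted $\pm1$ discrepancies and $|C_i|\ge|V_i|$, with the total count of colours dictated by $m$. Then Theorem~\ref{thm:BPW} gives matchings between consecutive layers. Use \ref{SPC} in case~\ref{case:cycle}, where $m-100$ gives $100$ spare colours. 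In cases \ref{case:n-1} and \ref{case:n}, where the colour count is exact, use \ref{SPC} for all but one layer and \ref{BPW} or \ref{RBS} for a single layer, which loses exactly one vertex (case \ref{case:n-1}) or breaks the cycle into a Hamilton path (case \ref{case:n}). Concatenating the matchings with the Step 2 paths and closing through $S$ via Proposition~\ref{prop:basic} gives the rainbow cycle, or the path obtained by deleting one edge. In case~\ref{case:cycle}, vertices outside $V'$ are placed into $\mathcal{J}$ from the start. For the furthermore statement, $xy$ is forced in as a fixed edge of the router's connecting path or of a layer.

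\textbf{Main obstacle.} The hardest step is the exact accounting in Step 3. Vertices and colours must be redistributed so that every layer has exactly the sizes required by \ref{RBS}, \ref{BPW} and \ref{SPC}. At most one ``defect'' may be spent overall, while the perturbation from the random sets stays below $\eps' n$. I would first try to fix the router size $m$ and the sizes of the primed parts last, using them as the slack that makes all the size equations hold.
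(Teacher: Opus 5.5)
\textbf{Gap: router order does not match the number of paths.} Your architecture matches the paper's: a router in a small random reservoir, primed sets absorbing the disruption and handled by Theorem~\ref{thm:RSBcoveringstephyper}, and random unprimed sets handled by Theorem~\ref{thm:BPW} with \ref{SPC} supplying spare colours. But your parameters break the final closing step. Proposition~\ref{prop:basic} needs a path forest with exactly $|A|=|B|$ paths, all starting in $B$ and ending in $A$. In any forest built from matchings between consecutive layers, the number of paths equals the common layer size, and every vertex of the first layer starts a path. In your set-up the unprimed layer $V_1$ alone has about $n/k$ vertices, none of which lie in $A\subseteq V_1'$ or $B\subseteq V_k'$. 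Since $q\llpoly p\llpoly 1/k$, you get about $n/k\gg qn$ paths against a router of order $qn$, so no single cycle results. The ``re-threading'' in Step~3 cannot change this count. You also cannot enlarge the router to order $\Theta(n/k)$. Lemma~\ref{lem: routers in digraphs} requires the order to be $\llpoly$ the reservoir density $p$. Its routers use between $40$ and $100$ times their order in internal vertices, all taken from a reservoir of size about $pn\ll n/k$.

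\textbf{The missing idea, and the unresolved bookkeeping.} The fix is to tie the number of layers to the router order. The paper takes $t=(1-p)/q$ layers, a constant depending on $q$ rather than one with $p\llpoly 1/k$. Every full layer $V_i\cup V_i'$ has exactly $\ell=(n-M)/t\approx qn$ vertices. Here $V_i,V_i'$ are $q/2$-random, $M\approx 50qn$ is the number of router vertices, and the rest of the $p$-random reservoir is spread over the middle primed parts. The router (property \ref{sorting}) has as terminal sets the entire first and last layers, $V_1\cup V_1'$ and $V_t\cup V_t'$. Uncovered primed vertices are moved into the random part of the same layer, which keeps all layers the same size. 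Then all $\ell$ paths are closed up at once: both the nibble paths through the primed parts and the \ref{SPC}/\ref{BPW} paths through the random parts.

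Your exact colour accounting is also left open. The paper gives each middle colour class exactly $|V_i|+100$ colours, applies \ref{SPC}, and passes the $100$ unused colours of each such layer to $C_1$, so the last layer gets exactly $|C_1|=m+s-n$. With $m=n-1$ this is $s-1$, so neither \ref{RBS} nor \ref{BPW} applies to that layer as you suggest. The paper frees a colour by deleting an edge $v_2v_3$ and uses two dummy edges at $v_2$. Alternatively, case~(1) follows from case~(2) applied to $D-v$, which is $(n-1,2\eps)$-typical and still has all $n-1$ colours.
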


\begin{proof}
The proofs of \ref{case:n-1}--\ref{case:cycle} only diverge towards their end. For \ref{case:cycle}, let $V' \subseteq V(D)$  have size $\min\{n, m-100\}$, let $D'=D[V']$ and let $n'=|V'|\geq n-101$. Note that, then, $D'$ is $(n', 2\eps)$-typical and properly edge-coloured with at least $n'+100$ colours.
To prove \ref{case:n-1} or \ref{case:n}, let $(\bar{n}, \bar{D})=(n,D)$, and to prove \ref{case:cycle} let $(\bar{n}, \bar{D})=(n',D')$. 

Take $q,w,p$ with $1/{n}\ll q\ll w \ll p\leq 10^{-5}$ and so that $t:=(1-p)/q$ is an integer. Further, let $\eta$ and $\tilde{\eps}$ satisfy
$$\eps \llpoly \eta \llpoly \tilde{\eps} \llpoly\log^{-1}{n}. $$
Then, let
\begin{equation}\label{eq:vxpartition}
R_V, V_1, V_1', V_2, V_2',\dots, V_{t}, V_{t}'
\end{equation} be a random partition of $V(\bar{D})$ for which $R_V$ is $p$-random and the other sets are $q/2$-random (using that $2(q/2)t + p =1$). In particular, any pair of these random sets has the distribution of a pair of disjoint random sets. Let
\begin{equation}\label{eq:colpartition}
R_C,C_1, C_1',C_2,C_2',\dots, C_{t-1},C_{t-1}', C_{\emptyset}
\end{equation}
be a random partition of $C(\bar{D})$ for which $R_C$ is $p$-random, each set $C_i,C_i'$, $i\in [t-1]$, is $q/2$-random, and $C_\emptyset$ is $q$-random.
\par Let $M$ be a value closest to $50q\bar{n}$ so that $\ell:=(\bar{n}-M)/t$ is an integer, noting that $M=50q\bar{n}\pm t$. Let $\tilde q=\ell/\bar{n}$ and $\bar q=(\ell-q\bar{n}/2)/\bar{n}$. We will show that the following inequalities hold.
\begin{equation}\label{eq:inequalities}
q + p/10t\leq \tilde q\leq 1.01q\;\;\;\;\;\;\;\; 49\tilde q \bar{n}\leq M \leq 51\tilde q \bar{n}\;\;\;\;\;\;\;\; q/2\leq \bar q \leq 1.02q/2
\end{equation}
Indeed, first note that $\tilde q=\ell/\bar{n}\leq 1/t=q/(1-p)\leq 1.01q$, where we used that $p\leq 10^{-5}$. Furthermore, $\tilde q=\ell/\bar{n}=(1-M/\bar{n})/t\geq (1-50q-t/\bar{n})/t=q+(p-50q-t/\bar{n})/t\geq q+p/2t$ (as $p\gg q$), and thus the first inequality at \eqref{eq:inequalities} holds. Next, the second inequality at \eqref{eq:inequalities} holds as $M=50q\bar{n}\pm t$, $t\leq 1/q$ and $1/n\ll q$. Finally, using $\ell=(\bar{n}-M)/t$, we have $\bar{q}=(1/t)-(q/2)\pm (M/(t\bar{n}))=(q/2)+(pq/(1-p))\pm (100q/t)$. Thus, the third inequality at \eqref{eq:inequalities} holds as $t\geq 1/(2q)$ and $1/n\ll q\ll p$.

We will now show that the following properties each hold with high probability, and therefore they hold simultaneously with high probability.
\stepcounter{propcounter}
\begin{enumerate}[label = {\textbf{\Alph{propcounter}\arabic{enumi}}}]
\item\label{chernoff} For each $c\in \{p,q/2,q\}$, every $c$-random set at \eqref{eq:vxpartition}  has $c\bar{n} \pm \bar{n}^{0.6}$ elements, and every $c$-random set at \eqref{eq:colpartition} has $c|C(\bar{D})|\pm \bar{n}^{0.6}=(1\pm 2\eps)c\bar{n}$ elements.
\item\label{sorting} For any disjoint subsets $A,B\subseteq V(\bar{D})$, each of size $\tilde{q}\bar{n}$ and any set $\mathcal{J}$ of at most $w \bar{n}$ colours and vertices not in $A \cup B$, there exists an $A,B,C'$-Hamilton-router $S$ with $V(S)\subseteq A\cup B\cup (R_V\setminus \mathcal{J})$, $C'\subseteq R_C\setminus \mathcal{J}$, and  $|V(S)\setminus (A\cup B)|=M$.

\item\label{typical1} For each $1\leq i<j\leq t$ and $k\in [t-1]$, and any disjoint $X',Y'\subseteq V(\bar{D})$ and $C'\subseteq C(\bar{D})$ satisfying $|V_i\triangle X'|,|V_j\triangle Y'|,|C_k\triangle C'|\leq \tilde{\eps} \bar{n}$, \ref{RBS}, \ref{BPW} and \ref{SPC} hold in Theorem~\ref{thm:BPW}.

\item\label{shrinkfirst}  For every $1\leq i<j\leq t$ and $k\in [t-1]$, disjoint sets $\bar  V_i',\bar V_j'\subset V(\bar{D})$ and $\bar C_k'\subseteq C(\bar{D})$ with $|\bar  V_i'|=\bar{q}\bar{n}\pm \eta \bar{n}$, $|\bar V_j'|=\bar{q}\bar{n}\pm \eta \bar{n}$, $|\bar{C}_k'|=\bar{q}\bar{n}\pm \eta \bar{n}$ and $|V_i'\setminus \bar  V_i'|,|V_j'\setminus \bar  V_j'|, |C_k'\setminus \bar  C_k'|\leq {\eta} \bar{n}$, $\bar{D}[\bar  V_i', \bar V_j', \bar C_k']$ has a rainbow matching of size $\bar q \bar{n} -10\eta \bar{n}$.
\end{enumerate}

Indeed, first note that \ref{chernoff} holds with high probability by a simple application of Chernoff's bound (Lemma~\ref{chernoff-bound}) and a union bound. Furthermore, \ref{sorting} holds with high probability by \eqref{eq:inequalities} and Lemma~\ref{lem: routers in digraphs} applied with
$(R_V,R_C,\tilde q,w,p,\eps)$ in place of $(X,C,q,w,p, \eps)$. Next, \ref{typical1} holds for each such $1\leq i<j\leq t$ and $k\in [t-1]$ with high probability (as $\bar{n}\to\infty$) by Theorem~\ref{thm:BPW} applied with $(\bar{n},q/2,\tilde{\eps},V_i,V_j,C_k)$ in place of $(n,p,\eps,X,Y,C)$, using that $\tilde{\eps}\llpoly \log^{-1}\bar{n}$ and that the $(\bar{n}, \eps)$ or $(\bar{n}, 2\eps)$-typical digraph $\bar{D}$ we started with is also $(\bar{n}, \tilde{\eps})$-typical as $\eps \llpoly \tilde{\eps}$.
Thus, as $1/\bar{n}\ll 1/t$, by a union bound \ref{typical1} holds with high probability. Finally, we will show that \ref{shrinkfirst} holds with high probability by Theorem~\ref{thm:RSBcoveringstephyper}. For this, consider an auxiliary tripartite hypergraph $\mathcal{H}$
on parts $A,B,C$ where $A$ and $B$ are copies of $V(\bar{D})$, $C$ is $C(\bar{D})$, and we have an edge $\{a,b,c\}$ exactly when there is an edge $ab$ with colour $c$ in $\bar{D}$. As $\bar{D}$ is $(\bar{n},2\eps)$-typical, this forms an $(\bar{n},1,5\eps)$-typical hypergraph.
Observe that, for each $1\leq i<j\leq t$ and $k\in [t-1]$, the random bipartite edge-coloured digraph $\bar{D}[V_i', V_j', C_k']$ corresponds to an induced random sub-hypergraph $\mathcal{H}[V_i', V_j', C_k']$ where the random sets have parameter $q/2$.
Therefore, by Theorem~\ref{thm:RSBcoveringstephyper} with $(q/2,q/2,q/2,\bar{q},\eta,V_i', V_j', C_k')$ in place of $(q_A,q_B,q_C,{q},\eta,A',B',C')$ (using that $2\bar{q}/3\leq q/2\leq \bar{q}$ holds by the third inequality at \eqref{eq:inequalities}), with high probability we have that,
for any disjoint vertex sets $\bar  V_i', \bar V_j'$ and any colour set $\bar C_k'$, with $V_i'\subset \bar V_i'$, $V_j'\subset \bar V_j'$, $C_k'\subset \bar C_k'$ and $|\bar V_i'|=|\bar V_j'|=|\bar{C}'_k|=\bar{q}\bar{n}$,
there is a matching of size $\bar q \bar{n} -\eta \bar{n}$ in $\bar{D}[\bar  V_i', \bar V_j', \bar C_k']$. Note that, furthermore, by applying this to $(\bar{V}_i',\bar{V}_j',\bar{C}_k')$ with up to $3\eta \bar{n}$ elements moved in or out of each set, if this holds, then we have that \ref{shrinkfirst} holds for $i,j$ and $k$.
As this holds with high probability for each $1\leq i<j\leq t$ and $k\in [t-1]$ as $\bar{n}\to \infty$, and $1/n\ll 1/t$, a union bound implies that \ref{shrinkfirst} holds with high probability.

Therefore, we can now fix an outcome of our random partitions so that \ref{chernoff}--\ref{shrinkfirst} hold. Move some vertices from $R_V$ to $V_1'$ and $V_t'$ so that $V_1\cup V_1'$ and $V_t\cup V_t'$ both have size exactly $\ell$, where $\ell\leq \bar{n}/t\leq q\bar{n}/(1-p)\leq 2q\bar{n}$ and
\begin{equation}\label{eq:ellbound}
\ell=\frac{\bar{n}-M}{t}=\frac{q(\bar{n}-M)}{1-p}\overset{\eqref{eq:inequalities}}\geq q\bar{n}+2n^{0.6}.
\end{equation}
Thus, this can be achieved by \ref{chernoff}, as $q\ll p$, by moving at most $2q\bar{n}\ll w\bar{n}$ elements. By \ref{sorting}, we may find a $(V_t\cup V_t', V_1\cup V_1', C')$-Hamilton-router $S$, such that $V(S)\setminus (V_t\cup V_t'\cup V_1\cup V_1')\subset R_V$ (excluding the $\ll w\bar{n}$ vertices moved to $V_1'$ and $V_t'$), such that $C'\subseteq R_C$ and $|V(S)\setminus (V_t\cup V_t'\cup V_1\cup V_1')|=M$. 
\par Take all the remaining vertices in $R_V$ outside of $V(S)$ and distribute them among the sets $V_i'$, $2\leq i\leq t-1$, so that $|V_i\cup V_i'|=\ell$ for each $2\leq i\leq t-1$. This is possible as $\bar{n}=M+t\ell$ and, by \ref{chernoff} and \eqref{eq:ellbound}, $|V_i\cup V_i'|\leq q\bar{n}+2\bar{n}^{0.6}\leq \ell$. Furthermore, for each $i\in [t]$, we have by \ref{chernoff} that $|V_i'|=\ell-|V_i|=\bar{q}\bar{n}\pm n^{0.8}$.

\begin{figure}[h]
    \hspace{-1cm}\begin{minipage}{1.1\textwidth}\begin{minipage}[t]{0.5cm}\textbf{a)}\end{minipage}\begin{minipage}[t]{0.33\textwidth}\includegraphics[scale=0.6,trim={3.9cm 0cm 4cm 0cm},clip,valign=t]{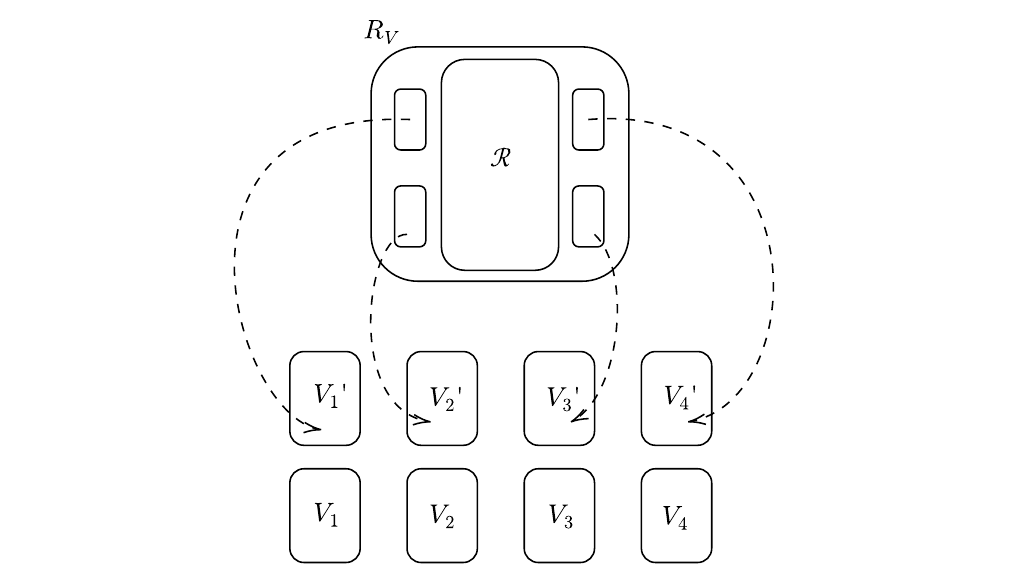}\end{minipage}\;\begin{minipage}[t]{0.5cm}\textbf{b)}\!\!\!\end{minipage}\includegraphics[scale=0.6,trim={4.1cm 0cm 4.1cm 0cm},clip,valign=t]{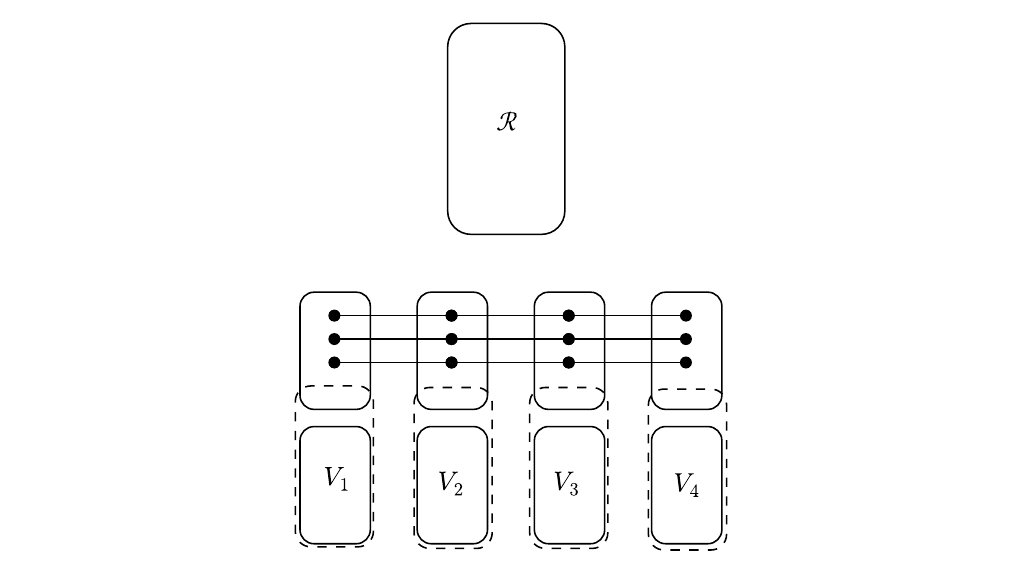}\begin{minipage}[t]{0.5cm}\textbf{c)}\end{minipage}\includegraphics[scale=0.6,trim={3.9cm 1cm 4cm 0cm},clip,valign=t]{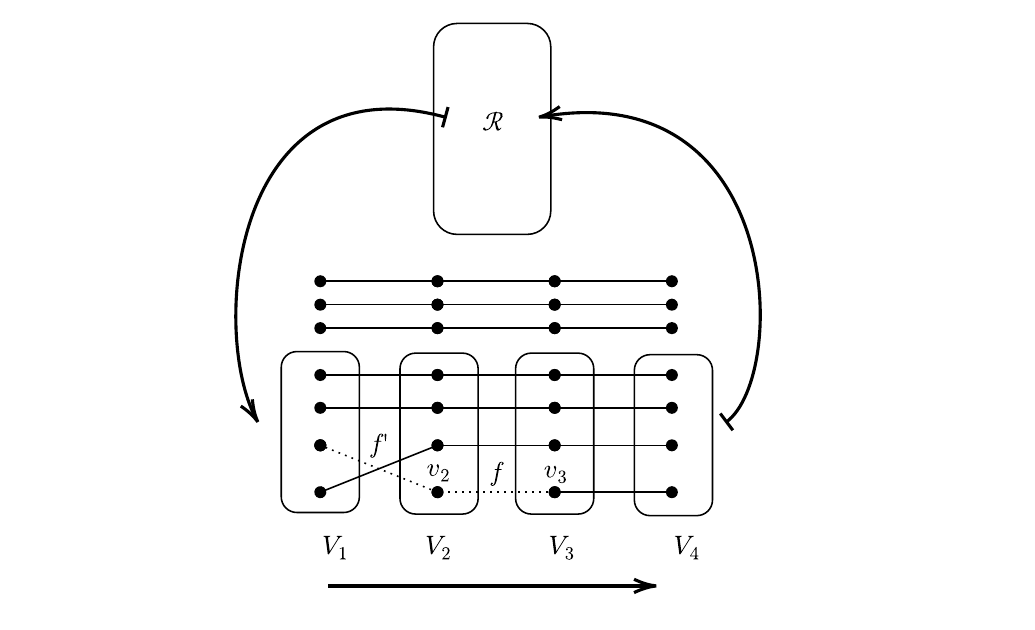}
    \end{minipage}
    \caption{Steps in the proof of Theorem~\ref{thm:furthermore}. \textbf{a)} The vertex partition at \eqref{eq:vxpartition} and the reallocation steps illustrated with $t=4$. \textbf{b)} After almost perfect matchings are found between the sets $V_i'$ and $V_{i+1}'$, $i\in [t-1]$, the leftover in each $V_i'$ is transferred to $V_i$. \textbf{c)} The final steps of the proof, in case \ref{case:n-1}. If $f$ and $f'$ are present, then there is a perfect matching between $V_i\cup V_i'$ and $V_{i+1}\cup V_{i+1}'$ for each $i\in [t-1]$. This induces a spanning linear forest on $\bigcup_{i\in [t]} (V_i\cup V_i')$ with endpoints in $V_1\cup V_1'$ and $V_t\cup V_t'$. Identifying the endpoints of the same paths, the routing network $S$ can merge these paths into a single (Hamilton) cycle. Deleting $f$ and $f'$ yields a path on $n-1$ vertices, as $f$ and $f'$ are both incident to $v_2$.}
    \label{fig:overallproof}
\end{figure}

\par Working analogously with the colour partition, using \ref{chernoff}, reallocate the colours in $C_{\emptyset}$ and $R_C\setminus C(S)$ among the sets $C_i'$, $i\in [t-1]$, so that $|C_i\cup C_i'|=\ell+100$ for each $2\leq i\leq t-1$ and $|C_1\cup C_1'|\geq \ell -200t$. This is possible by \ref{chernoff} and \eqref{eq:ellbound}, and as $|C(S)|=M + \ell$ and $m-(M + \ell)-(t-2)(\ell+100) = m-\bar{n}+\ell-100(t-2)$. Note that, as $|C(\bar{D})|\leq (1+\eps)\bar{n}$, $|C_1\cup C'_1|\leq \ell+\eps n$. Then, for each $2\leq i\leq t-1$, we have by \ref{chernoff} that $|C_i'|=\ell+100-|C_i|= \bar{q}n\pm 2\eps \bar{n}$.

\par We can now invoke \ref{shrinkfirst} for $\bar{D}[V_{i}', V_{i+1}', C_i']$ for each $i\in [t-1]$. 
Together, the rainbow matchings found give a rainbow directed path forest in which at most $20(t-1)\eta \bar{n}$ of the vertices in $V_{t-1}'\cup V_{t-2}'\cup \dots\cup V_2'$ do not have in- and out-degree 1. Therefore, letting $F$ be the forest of those paths with length $t-1$, for each $i\in [t]$, all but at most $20(t-1)\eta\bar{n}$ vertices in $V_i'$ lie in $F$. 
Similarly, for each $i\in [t-1]$, all but
at most $20(t-1)\eta\bar{n}$ colours in $C'_i$ are used on $F$. Let us denote by $r$ the number of paths in $F$.

For each $i\in [t]$, move the vertices in $V_i'\setminus V(F)$ to $V_i$ and, for each $j\in [t-1]$, move the colours in $C_j'\setminus C(F)$ to $C_j$. This ensures that each set $V_i$ and $C_j$, $i\in [t]$ and $j\in [t-1]$, has symmetric difference at most $20(t-1)\eta \bar{n}\ll \tilde{\eps} \bar{n}$ from its purely random variant sampled in the beginning.
Note, furthermore, that, as we had $|V_i'\cup V_i|=\ell$ for each $i\in [t]$ before this move, we now have that $|V_i|=|V_j|$ for each $i,j\in [t]$. Similarly, for each $2\leq i\leq t-1$, as we had $|C_i'\cup C_i|= |V_i'\cup V_i|+100$, we now have that $|C_i|=|V_i|+100$.
\par For convenience, let us introduce $s:= \ell -r$, noting that with the exchanges performed thus far, we have that $|V_i|=s$ for each $i\in [t]$.
\par By \ref{typical1} (in particular, using \ref{SPC} from Theorem~\ref{thm:BPW}), for each $2\leq i \leq t-1$ there is a perfect rainbow matching in $\bar{D}[V_i, V_{i+1}, C_i]$ from $V_{i}$ into $V_{i+1}$. Let $F'$ be the rainbow path forest formed by the union of all these matchings. Note that, for each $2\leq i\leq t-1$, there are exactly $100$ colours in $C_i$ not used on $F'$. Move all of these unused colours into $C_1$. Note that, now,
\begin{align}
|C_1|
    &= m-|C(S)|-|C(F)|-(t-2)s \notag\\
    &= m-(M+\ell)-r(t-1)-(t-2)s \notag\\
    &= m-M-(t-1)\ell-r \notag\\
    &= m+(\ell-r)-(M+t\ell) \notag\\
    &= m+s-\bar n.\notag
\end{align}

\par Note also that $V_1$ and $V_2$ both have precisely $s$ vertices whose in and out degrees are not both $1$ in $F'$.
\par We now find a perfect rainbow matching in $\bar{D}[V_1,V_2,C_1]$ from $V_1$ to $V_2$, perhaps including up to two dummy edges, $f$ and $f'$. Which result we invoke from \ref{typical1} to do this, and how many dummy edges we use, varies depending on which of \ref{case:n-1}--\ref{case:cycle} we are proving.

In case \ref{case:n-1}, we have  $(\bar{n}, \bar{D})=(n, D)$ and $m\geq n-1$, and thus $|C_1|\geq s-1$. Arbitrarily, pick an edge $(v_2,v_3)\in V_2\times V_3$ from $F'$, delete this edge from $F'$, and add the colour of $v_2v_3$ to $C_1$, so that, now, $|C_1|\geq s$.
Then, by \ref{typical1} (in particular, using \ref{BPW} in Theorem~\ref{thm:BPW}), we have that $D[V_1, V_2\setminus\{v_2\}, C_1]$ contains a matching of size $|V_2\setminus\{v_2\}|$. Add this matching to $F'$ and let $v_1$ be the vertex in $V_1$ not in $V(F')$. Let $f$ and $f'$ be the edges $v_2v_3$ and $v_1v_2$, each with a different new (dummy) colour, and add $f$ and $f'$ to $F'$.

In case \ref{case:n}, we have  $(\bar{n}, \bar{D})=(n, D)$ and $m\geq n$, and thus $|C_1|\geq s$. Pick $v_2\in V_2$ arbitrarily. Then, by \ref{typical1} (in particular, using \ref{BPW} in Theorem~\ref{thm:BPW}), we have that $D[V_1, V_2\setminus\{v_2\}, C_1]$ contains a matching of size $|V_2\setminus\{v_2\}|$. Add this matching to $F'$ and let $v_1$ be the vertex in $V_1$ not in $V(F')$. Let $f$ be the edge $v_1v_2$ with a new colour, and add $f$ to $F'$. Let $f'=\emptyset$.

In case \ref{case:cycle}, we have  $(\bar{n}, \bar{D})=(n', D')$ and $m\geq n'+100$, and thus $|C_1|\geq s+100$. Then, by \ref{typical1} (in particular, using \ref{SPC} in Theorem~\ref{thm:BPW}), we have that $D[V_1, V_2, C_1]$ contains a matching of size $|V_2|$. Add this matching to $F'$, and let $f=f'=\emptyset$.

In all cases, we have that $F\cup F'\subset \bar{D}+f+f'$ is a rainbow collection of $\ell$ vertex-disjoint directed paths of length $t-1$ from $V_1\cup V_1'$ to $V_{t}\cup V_t'$ intersecting
$V(S)$ precisely in $(V_1\cup V_1')\cup(V_t\cup V_t')$, with these vertices as the endpoints of its paths, and its internal vertices are precisely $V(\bar{D})\setminus V(S)$ and its non-dummy colours are contained in $C(\bar{D})\setminus C(S)$. By mapping each vertex in $V_t\cup V_t'$ to the vertex in $V_1\cup V_1'$ on the same path in $F\cup F'$, and using the Hamilton router property of $S$, we have that $\bar{D}+f+f'$ contains a directed rainbow Hamilton cycle, $S_0$ say.

In case \ref{case:n-1}, as  $(\bar{n}, \bar{D})=(n, D)$, and $f$ and $f'$ share the vertex $v_2$, we have that $S_0-f-f'$ (and hence $\bar{D}$) contains a directed rainbow path on $n-1$ vertices. In case \ref{case:n},  as $(\bar{n}, \bar{D})=(n, D)$, and $f'=\emptyset$, we have that $S_0-f-f'$ (and hence $\bar{D}$) contains a directed rainbow Hamilton path. In case \ref{case:cycle}, as $(\bar{n}, \bar{D})=(n', D')$, $D'=D[V']$, and $f=f'=\emptyset$, we have that $S_0$ is a directed rainbow cycle in $D$ with vertex set $V'$.

\par Finally, the `furthermore' part of the theorem for each distinct $x,y\in V(D)$ follows easily by taking an instance of our random partitions at \eqref{eq:vxpartition} and \eqref{eq:colpartition} so that, furthermore, $x\in V_3$ and $y\in V_4$ (noting that this holds with probability $(q/2)^2=\Omega(1)$) and, for case \ref{case:cycle}, taking an arbitrary $V'\subset V(D)$ with $x,y\in V'$ and $|V'|=n-100$. Then, in the step where we find a matching between $V_3$ and $V_4$, we choose a matching that uses $xy$ (by applying Theorem~\ref{thm:BPW} \textbf{SPC} to $D[V_3\setminus \{x\}, V_4\setminus \{y\},C_3]$). Note that the spare colour provided by this dummy edge then implies that at the final step we have in case \ref{case:n-1} or \ref{case:n} that $|C_1|\geq s$, and in case \ref{case:cycle} that $|C_1|\geq s+100$, and thus we can find a directed rainbow Hamilton path containing $xy$ and a directed rainbow cycle of length $n-100$ containing $xy$, as required.
\end{proof}


\section{Conjectures of Andersen and Gy\'arf\'as--S\'ark\"ozy for large $n$}\label{sec:andersen}
\par In this section, we use Theorem~\ref{thm:furthermore} to prove Theorems \ref{thm:mainintro}, \ref{thm:mainintrocycle}, and \ref{thm:GS-intro}. We start with the former two, deferring the third until later in the section.

\par Recall that Andersen's conjecture (Conjecture \ref{conj:andersen}) asserts that every properly coloured copy $G$ of $K_n$ admits a rainbow path on $n-1$ vertices. In particular, this includes many cases where $G$ is not $(n,\eps)$-typical for even $\eps=1/2$. On the other hand, many colourings of $G$ which are far from being $(n,\eps)$-typical for some sufficiently small $\eps>0$ are covered by the following result of Montgomery, Pokrovskiy, and Sudakov~\cite[Theorem~1.10]{montgomery2018decompositions} which finds, moreover, a rainbow Hamilton cycle.

\begin{theorem}[Montgomery--Pokrovskiy--Sudakov]\label{thm:MPS}
There is an $\alpha > 0$ so that the following holds for all $1 > \eps \geq n^{-\alpha}/\alpha$. Let $K_n$ be properly coloured with at most $(1 - \eps)n$ colours having more than $(1 - \eps)n/2$ edges. Then, $K_n$ has a rainbow Hamilton cycle.
\end{theorem}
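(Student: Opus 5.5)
This is the result of Montgomery, Pokrovskiy and Sudakov~\cite[Theorem~1.10]{montgomery2018decompositions}, and the paper uses it only as a black box. If I had to prove it, I would follow an absorption strategy in which the hypothesis supplies \emph{surplus colours} that absorb the inevitable colour waste.

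\textbf{Step 1: extract the surplus.} Call a colour \emph{large} if it has more than $(1-\eps)n/2$ edges and \emph{small} otherwise. There are at most $(1-\eps)n$ large colours. Each colour class of a proper colouring is a matching, so it has at most $n/2$ edges. Hence large colours cover at most $(1-\eps)n^2/2$ edges, and at least roughly $\eps n^2/2-n$ edges lie in small colours. Splitting the argument by how many small colours there are, I would show that in either case the graph has $n+\Omega(\eps n)$ ``effective'' colours. Concretely, after discarding a few colours and edges, I would aim for a spanning subgraph in which:
\begin{itemize}
\item every vertex meets $\Omega(\eps n)$ edges of small colours, and
\item the total colour budget exceeds $n$ by $\Omega(\eps n)$.
\end{itemize}
This margin is exactly what an exact-$n$ problem like a rainbow Hamilton cycle requires.

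\textbf{Step 2: random split and near-spanning structure.} I would reserve a small random vertex set $R$ and a small random colour set $C_R$ (consisting mostly of small colours) for absorbers and connectors. On the rest of the graph, I would apply a R\"odl-nibble argument, in the style of Theorem~\ref{thm:RSBcoveringstephyper}, to the $3$-uniform hypergraph of (vertex, vertex, colour) triples. This gives a rainbow linear forest with $o(n)$ paths that covers all but $n^{1-\delta}$ vertices, with polynomially small error. The choice $\eps\ge n^{-\alpha}/\alpha$ is precisely what keeps the nibble error below the surplus from Step~1.

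\textbf{Step 3: absorb and close.} I would construct two kinds of absorbers inside $R\cup C_R$, using short rainbow connections as in Lemma~\ref{Connecting_lemma} together with Janzer-type cycle-finding (Theorem~\ref{Janzer_thm}):
\begin{itemize}
\item \emph{vertex absorbers}, which are paths that can optionally swallow any one leftover vertex;
\item \emph{colour absorbers}, which are gadgets with two rainbow path options on the same vertex set whose colour sets differ by a single colour, so that any leftover colour can be used or omitted.
\end{itemize}
I would then link the forest paths and absorbers into a single cycle with short rainbow connectors. Finally I would activate the absorbers so that every vertex is covered, and so that exactly $n$ colours are used with no repeats.

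\textbf{Main obstacle.} I expect the colour bookkeeping in Step~3 to be the hardest part. The leftover colours from the nibble are adversarial, and the surplus is only $\eps n$ with $\eps$ polynomially small. The colour absorbers must therefore be flexible enough to discard up to $\Omega(\eps n)$ unwanted colours while the number of edges stays exactly $n$. My first attempt would be to build a ``distributive'' absorber family indexed by a sparse bipartite robust-matching template: leftover colours are matched to absorbers via a template graph that retains a perfect matching after the deletion of any small set. This is the standard device for handling adversarial leftovers.
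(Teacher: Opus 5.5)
Your treatment matches the paper's: the paper gives no proof of this statement and simply imports it as \cite[Theorem~1.10]{montgomery2018decompositions}, so the citation is all that is required here.

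Your optional sketch is not needed, and its Step~1 overstates the colour surplus. Suppose $L\le(1-\eps)n$ colours are large. The small colours then carry at least $\binom{n}{2}-Ln/2$ edges, with at most $(1-\eps)n/2$ edges per colour. This only gives
\[
|C(K_n)|\ \ge\ L+\frac{n-1-L}{1-\eps}\ =\ n-1+\frac{\eps(n-1-L)}{1-\eps}\ \ge\ n-1+\frac{\eps(\eps n-1)}{1-\eps}.
\]
This bound is essentially attained. Keep $(1-\eps)n$ classes of a $1$-factorisation, and recolour the union of the remaining $\eps n-1$ classes equitably so that each colour class has at most $(1-\eps)n/2$ edges.

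So the margin is of order $\eps^2 n$, not $\eps n$. It is still polynomial in $n$ when $\eps\ge n^{-\alpha}/\alpha$, so this only forces a smaller $\alpha$ in your bookkeeping. Your per-vertex claim is correct as stated: each vertex sees at least $\eps n-1$ small-colour edges.
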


We will use Theorem~\ref{thm:furthermore}, in combination with a stability analysis to bridge the gap between $(n,\eps)$-typicality and the condition in Theorem~\ref{thm:MPS}, to prove the following result.

\begin{theorem}\label{thm:reduction}
Let $1/n, \, \eps \llpoly \log^{-1}n$. Let $K_n$ be properly coloured using $m \geq n-1$ colours, with more than $(1-\eps)n$ colours appearing on more than $(1-\eps)n/2$ edges. Then $K_n$ admits a rainbow path on at least $\min\{n,m\}$ vertices, and a rainbow cycle missing at most $101$ vertices.
\end{theorem}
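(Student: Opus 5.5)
We reduce Theorem~\ref{thm:reduction} to Theorem~\ref{thm:furthermore} by deleting a few vertices and rare colours to obtain a typical digraph. We then add the deleted vertices back using the `furthermore' part of Theorem~\ref{thm:furthermore} together with short rainbow detours.

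\emph{Step 1: cleaning.} Call a colour \emph{large} if it has more than $(1-\eps)n/2$ edges, and let $L$ be the set of large colours, so $|L|>(1-\eps)n$. Since every colour class is a matching, each colour has at most $n/2$ edges. Hence the edges with colours outside $L$ number at most $\binom n2-|L|(1-\eps)n/2\le 2\eps n^2$. Let $Z$ be the set of vertices incident to more than $\sqrt\eps n$ non-large edges; then $|Z|\le 4\sqrt\eps n$. Let $G'$ be $K_n-Z$ with only the large-coloured edges kept, and view it as a digraph with both orientations. Then every vertex has degree $(1\pm 5\sqrt{\eps})n$. Every large colour lost at most $|Z|$ edges, so it appears $(1\pm 10\sqrt\eps)n/2$ times as an undirected edge, i.e.\ $(1\pm10\sqrt\eps)n$ times as a directed edge. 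So $G'$ is $(|G'|,\eps')$-typical with $\eps'=10\sqrt\eps$, and $\eps'\llpoly\log^{-1}n$ still holds. The number of colours is $|L|\ge |G'|-1$ because $|L|>(1-\eps)n\ge n-|Z|$ once $|Z|$ exceeds $\eps n$; if needed, we first add a few extra vertices of low non-large degree to $Z$ to ensure this.

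\emph{Step 2: absorbing $Z$.} We must reinsert the vertices of $Z$ (at most $4\sqrt\eps n$ of them) using colours outside $L$, or spare colours. Every vertex $z$ has degree $n-1$ in $K_n$, and its incident edges have $n-1$ distinct colours, so $z$ has at least $(1-O(\sqrt\eps))n$ edges with colours in $L$ to $V(G')$. Greedily find vertex-disjoint rainbow paths $x_z z y_z$ with $x_z,y_z\in V(G')$, using distinct colours from $L$, with all $x_z,y_z$ distinct. Then contract each $x_z z y_z$ into a single new directed edge $x_zy_z$ in an auxiliary digraph, with a fresh colour. Deleting the $2|Z|$ used colours from $L$ and the $O(\sqrt\eps n)$ adjacent vertices keeps typicality.

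The `furthermore' clause of Theorem~\ref{thm:furthermore} handles only one added edge, so this is the main obstacle. What I would try first is to redo the proof of Theorem~\ref{thm:furthermore} with the whole set $\{x_zy_z\}$ of special edges. Place each $x_z$ into some $V_i$ and $y_z$ into $V_{i+1}$, spread out so that each layer receives $o(\tilde\eps n)$ of them, and force these edges into the matching between $V_i$ and $V_{i+1}$. This is possible because \textbf{SPC} from Theorem~\ref{thm:BPW} tolerates removing their endpoints, which are a perturbation of size at most $\tilde\eps n$. Each special edge brings its own colour, so the colour counts balance exactly as in the single-edge case. Lifting the contracted edges back gives a rainbow path or cycle in $K_n$.

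\emph{Step 3: counting.} Since the special edges contribute one colour each, the colour budget at the final step is $m+s-n$ computed with the original $m$ and $n$, up to the dummy-edge adjustments. So the three cases of Theorem~\ref{thm:furthermore} give a rainbow path on $n-1$ vertices when $m=n-1$, and on $n$ vertices when $m\ge n$; that is, on $\min\{n,m\}$ vertices. Case~\ref{case:cycle} gives a rainbow cycle covering all but at most $101$ vertices.
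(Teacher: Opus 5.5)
\textbf{There is a genuine gap in the colour count.} In Step~1 you discard every colour outside $L$. In Step~2 you route each detour $x_zzy_z$ through two colours of $L$, and the fresh colour on a contracted edge $x_zy_z$ is only a placeholder for those two $L$-colours. So every edge of the path or cycle you eventually lift back to $K_n$ has its colour in $L$, and it has at most $|L|+1$ vertices.

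The hypothesis only guarantees $|L|>(1-\eps)n$. For example, take $n$ even, keep $\lfloor(1-\eps)n\rfloor+1$ perfect matchings of a $1$-factorisation of $K_n$, and give every other edge its own new colour. Then $m\ge n$, but $|L|\approx(1-\eps)n$. When $\eps n$ is large, no path or cycle using only $L$-colours comes close to $n$ or $n-101$ vertices.

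The error is hidden in your Step~3 claim that the final budget is $m+s-n$ ``with the original $m$ and $n$''. The contracted digraph has $n-|Z|$ vertices but only $|L|-2|Z|+|Z|=|L|-|Z|$ colours. So the budget in the proof of Theorem~\ref{thm:furthermore} is really $|L|+s-n$, which is below $s-1$ whenever $|L|<n-1$.

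Separately, your assertion that each $z\in Z$ has $(1-O(\sqrt\eps))n$ edges coloured in $L$ does not follow. A vertex $z$ lies in $Z$ precisely because many of its edges are not in $L$, and possibly almost none are in $L$. This part is repairable by allowing any unused colour on the detours.

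\textbf{What is missing} is a step that deliberately puts at least $n-|L|$ non-large colours into the structure, before Theorem~\ref{thm:furthermore} is applied to a typical digraph whose colours are all large. In the paper this is property~\ref{Pcolnum} of Lemma~\ref{lem:rare-cols}. It is not a greedy step, because colours with fewer than $n/100$ edges may each occur on a single edge.

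The paper handles these very rare colours in Claim~\ref{claim:initialpathforest}, using a maximal rainbow path forest and an edge-swapping argument. The key fact is that if only $n-\bar r$ colours have at least $n/100$ edges, then every vertex meets at least $\bar r-1$ edges of very rare colours. The paper then proceeds as follows:
\begin{itemize}
\item colours with at least $n/100$ edges are added greedily;
\item the bad vertices $\hat V$ are attached as one-vertex paths;
\item everything is linked into a single rainbow $xy$-path $P$ through fresh connector vertices $w_i$, with arbitrary unused colours.
\end{itemize}
Because all bad vertices and rare colours lie on one path, a single dummy edge $xy$ suffices. The `furthermore' clause of Theorem~\ref{thm:furthermore} then applies exactly as stated, and your proposed modification of its proof with many special edges is unnecessary.
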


Note that both Theorems~\ref{thm:mainintro} and \ref{thm:mainintrocycle} follow directly from Theorem~\ref{thm:MPS} and Theorem~\ref{thm:reduction}. It remains only to prove Theorem~\ref{thm:reduction}. We call a colouring of $K_n$ satisfying the first condition of Theorem~\ref{thm:reduction} (i.e., more than $(1-\eps)n$ colours appearing on more than $(1-\eps)n/2$ edges) an {\it $\eps$-near-optimal} colouring. In such a colouring, we will first use the following lemma to find a short rainbow path $\hat{P}$, from, say, $x$ to $y$, such that $K_n-(V(\hat{P})\setminus \{x,y\})$ contains a spanning $(n,\eta)$-typical subgraph with many colours (see Definition~\ref{def:typicalgraph}), where $\eps\llpoly \eta$, to which we can then apply Theorem~\ref{thm:furthermore} by replacing each edge by the two possible directed edges on its vertex set with the same colour as the original edge.

\begin{lemma}\label{lem:rare-cols}
    Let $1/n, \, \eps \llpoly \eta \llpoly\log^{-1}n$. Let $G$ be an $n$-vertex complete graph which is properly coloured with an $\eps$-near-optimal colouring with at least $n$ colours.
    Then, there exist $x,y \in V(G)$ and a rainbow $xy$-path $P$ for which $G-(V(P)\setminus \{x,y\})$ contains an $(n,\eta)$-typical spanning subgraph $H$ with colours in $C(G)\setminus C(P)$ such that $|C(H)|\geq |H|-1$.
\end{lemma}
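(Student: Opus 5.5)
The plan is to split the colours into \emph{rare} and \emph{common} ones, and to use a short rainbow path $P$ to absorb the rare colours. Call a colour \emph{common} if it appears on more than $(1-\eps)n/2$ edges, and \emph{rare} otherwise. By $\eps$-near-optimality there are more than $(1-\eps)n$ common colours. Each colour appears on at most $n/2$ edges, so the common colours cover at least $(1-\eps)^2n^2/2$ edges. Hence at most roughly $2\eps n^2/2=\eps n^2$ edges have a rare colour. The target $H$ will essentially be the graph of common colours, which is almost $(1-O(\eps))n$-regular. Two things can go wrong with this: a vertex may be incident to too many rare-coloured edges, and the number of common colours may fall below $|H|-1$.

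\textbf{Step 1: bad vertices.} Call a vertex \emph{bad} if it lies on more than $\sqrt{\eps}n$ rare-coloured edges. By the edge count above there are at most $2\sqrt{\eps}n$ bad vertices. Every vertex is incident to at most $m_c:=|\{\text{common colours}\}|$ common edges, and all but $\sqrt\eps n$ of the edges at a good vertex are common. So the good vertices have common-degree $(1\pm 3\sqrt\eps)n$, once we also use that the number of common colours is at most about $n(1+2\eps)$. This last bound holds because common colour classes are near-perfect matchings, and $\binom n2$ edges can only support so many of them.

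\textbf{Step 2: the path $P$.} I will build a rainbow path $P$ whose internal vertices are exactly the bad vertices, together with a few extra vertices where needed. Its endpoints $x,y$ are good. Its colours are chosen to be rare wherever possible, so that few common colours are consumed. The path is built greedily, in the style of Lemma~\ref{Connecting_lemma}: between consecutive bad vertices $b_i,b_{i+1}$ I insert a connector of length at most $3$ through good vertices. This is possible because every vertex has degree $n-1$ in $K_n$ and the forbidden set has size $O(\sqrt\eps n)\ll n$.

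Then I set $H$ to be the subgraph of $G-(V(P)\setminus\{x,y\})$ formed by the edges whose colours are common and not in $C(P)$. The vertex count is $|H|=n-|V(P)|+2\ge n-O(\sqrt\eps n)$. Degrees drop by at most $O(\sqrt\eps n)$, since $P$ has $O(\sqrt\eps n)$ colours and vertices and the colouring is proper. Each common colour class loses at most $O(\sqrt\eps n)$ edges, as each removed vertex kills at most one edge of each colour. So every colour of $H$ has $(1\pm O(\sqrt\eps))n/2$ edges. Viewing $H$ as a digraph with each edge in both directions, this gives each colour about $n$ arcs. Hence $H$ is $(n,\eta)$-typical as long as $\sqrt\eps\llpoly\eta$, and this is exactly the room that $\eps\llpoly\eta$ provides.

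\textbf{Step 3 (the main obstacle): colour count.} We need $|C(H)|\ge |H|-1=n-|V(P)|+1$, and two effects cut against each other. Every internal vertex of $P$ decreases $|H|$ by one. Every common colour used on $P$ decreases $|C(H)|$ by one. So it suffices for $P$ to use at most $|V(P)|-2-(n-1-m_c)$ common colours. Since $P$ has $|V(P)|-1$ edges, this amounts to requiring that $P$ use at least $n-m_c+1$ rare colours.

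The case $m_c\ge n$ is easy: I let $P$ use common colours freely, provided its length is controlled. The hard case is $m_c\le n-1$. Then, since $m\ge n$, there are at least $n-m_c$ rare colours, and I must route $P$ through edges of at least $n-m_c+1$ distinct rare colours. One extra edge can be handled by making one step of $P$ a rare edge, or by adding a single vertex to $P$, and this adjustment needs some care.

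Here is how I would route $P$ through the rare colours. For each rare colour $c$, pick an edge $e_c$ of colour $c$. I want these edges pairwise disjoint and vertex-disjoint from the other chosen edges, and I then chain them into $P$ using length-$3$ common connectors. A counting argument bounds $n-m_c$ by $O(\eps n)$: each common class has at most $n/2$ edges, while the rare classes are few by the edge count above. So there are only $O(\eps n)$ rare edges to thread, and they can be chosen greedily, since each has many disjoint options when its colour class is large. If a rare class is tiny, I instead use that class's edges through bad or any vertices, which is still fine because it only requires distinctness. The connectors then add common colours at a ratio of three edges per two new internal vertices. This is the precise bookkeeping I expect to be delicate.

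To fix the ratio, each connector $b\to u\to v\to b'$ uses $3$ colours but adds only $2$ internal vertices, so it is net $-1$ against us. I would therefore use connectors of length $2$, namely $b\to u\to b'$, which are net $0$ when the colours are common. Such connectors exist via a common neighbour $u$, because almost all edges are common. With this choice, each rare edge contributes net $+1$ and each bad vertex contributes net $0$, so the inequality holds.
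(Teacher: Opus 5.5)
Your overall architecture matches the paper's: a common/rare split, bad vertices absorbed as interior vertices of $P$, good endpoints, and $H$ formed by the common colours off $P$. However, Step~3 has a genuine gap at exactly the point where the lemma is hard, namely rare colours whose classes are tiny. Let $\bar C$ be the set of colours with fewer than $n/100$ edges, and let $\bar r=\max\{0,n-|C(G)\setminus\bar C|\}$. There are only $|C(G)\setminus\bar C|-m_c$ rare colours outside $\bar C$. Hence any valid $P$ must use at least $(n-m_c)-(|C(G)\setminus\bar C|-m_c)=\bar r$ colours from $\bar C$, and these classes may consist of a single edge each.

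Your remark that this ``only requires distinctness'' is false. The chosen edges must together form a rainbow linear forest (maximum degree $2$, no cycle) in order to be chained into $P$. For tiny classes there is essentially no freedom in the representative edge, only in which colours are used, so nothing in your argument shows that a suitable choice exists. The paper proves this as a separate claim, in the following steps.
\begin{itemize}
\item Every vertex has $n-1$ distinctly coloured edges, at most $|C(G)\setminus\bar C|$ of which have colours outside $\bar C$. So every vertex lies on at least $\bar r-1$ edges with colours in $\bar C$.
\item Take a maximal rainbow path forest $F$ with $C(F)\subseteq\bar C$, and suppose $e(F)<\bar r$. Counting gives at least $3\bar r n/5$ edges (when $\bar r\ge 3$) with colours in $\bar C\setminus C(F)$ leaving $V(G)\setminus V(F)$. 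By maximality they all end at interior vertices of $F$.
\item Cover these interior vertices by a matching $M\subseteq E(F)$ with $|M|\le \bar r/2$. Each $xy\in M$ receives at most $n$ of these edges, since otherwise $F-xy+xx'+yy'$ is a larger such forest. This gives at most $\bar r n/2$ edges, a contradiction.
\end{itemize}
The exchange step is genuinely needed. The trivial bound, at most $n$ such edges at each of fewer than $\bar r$ interior vertices, does not contradict the lower bound. Only after this can the remaining required rare colours, those with at least $n/100$ edges, be added greedily as disjoint edges, as you propose.

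There are also two smaller errors in your bookkeeping. First, the requirement is at least $n-m_c$ rare colours on $P$, not $n-m_c+1$. The latter is impossible when $m=n$, since there are then exactly $n-m_c$ rare colours. Second, since $e(P)=|V(P)|-1$, every rainbow $x,y$-path gives $|C(H)|-(|H|-1)\ge m_c-n+(\text{number of rare colours on }P)$, so connector lengths are irrelevant. Your per-connector `net' count is also inconsistent, because a length-$2$ connector likewise uses one more colour than the number of vertices it adds. Finally, connectors at a bad vertex cannot always use common colours, as a bad vertex may have almost all of its edges rare. This is harmless, since rare colours on connectors only help; in the paper the length-$2$ connectors simply avoid previously used colours.
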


\begin{proof}
Let $V=V(G)$ and $C=C(G)$, so that $|C|\geq n=|V|$. Let
\[
\hat{C}=\{c \in C:|E_{c}(G)|<(1-\eps)n/2\}\;\;\;\text{ and }\;\;\; \hat{V}:=\{v \in V: |\{e\in E(G):v\in V(e),c(e) \in \hat{C}\}|>\sqrt{\eps}n\}.
\]
Let $C'=C\setminus \hat{C}$ so that, as the colouring of $G$ is $\eps$-near-optimal, $|{C}'|>(1-\eps)n$. Since there are  at most $\binom{n}{2}-|C'|(1-\eps)n/2\leq \eps n^2$ edges in $G$ with a colour in $\hat{C}$, we have $|\hat{V}| < 2\sqrt{\eps}n$.

We will find a rainbow path $P$ in $G$ with some endpoints $x, y$ such that
\stepcounter{propcounter}
\begin{enumerate}[label = {\textbf{\Alph{propcounter}\arabic{enumi}}}]
\item $\hat{V} \subseteq V(P)\setminus\{x,y\}$,\label{path_vertices}
\item $P$ uses at least $n-|C'|$ colours from $\hat{C}$, and\label{Pcolnum}
\item $|P| \leq \eta n/3$.\label{Plength}\label{path_xy}
\end{enumerate}

Given such a path, we can easily find the required subgraph $H\subseteq G$ to go with it. Indeed, let $H$ be the graph with $V(H)=(V(G)\setminus V(P))\cup\{x, y\}$ and edge set $E(H)=\{e \in E(G[V(H)]):c(e) \in C'\setminus C(P)\}$. Then, $H$ has $n-|P|+2\geq (1-\eta)n$ vertices by \ref{Plength}, while every colour in $C(H)\subseteq C'$ appears at least $(1-\eps)n/2-|P|\geq (1-\eta)n/2$ times (by the definition of $C'$ and by \ref{Plength}) and at most $n/2$ times as $|H|\leq n$. Furthermore, for each $v\in V(H)$, $v\notin \hat{V}$ by \ref{path_vertices}, so therefore $d_H(v)\geq n-\sqrt{\eps}n-|P|\geq (1-\eta)n$, while we have $d_H(v)\leq |H|\leq n$. Thus, $H$ is $(n, \eta)$-typical. Finally, note that, by \ref{Pcolnum}, $|C(H)|\geq |C'|-e(P)+(n-|C'|)=n-|P|+1=|H|-1$, and thus $H$ is as desired.

It remains only to find a rainbow path $P$ in $G$ with some endpoints $x, y$ such that \ref{path_vertices}--\ref{path_xy} hold. We need to take particular care over colours which appear very few times. Let, then, $\bar{C}$ be the set of colours which each have fewer than $n/100$ edges. Let $\bar{r}=\max\{0,n-|C(G)\setminus \bar{C}|\}\leq \eps n$.

\begin{claim}\label{claim:initialpathforest}
There is a rainbow path forest $F\subseteq G$ with $C(F)\subseteq \bar{C}$ and $e(F)\geq \bar{r}$.
\end{claim}
\claimproofstart[Proof of Claim~\ref{claim:initialpathforest}]
Let $F$ be a rainbow path forest in $G$ with no isolated vertices which maximises $e(F)$ subject to $C(F)\subseteq \bar{C}$. Suppose, for a contradiction, that $e(F)<\bar{r}$. In particular, then, we have $0<\bar{r}=n-|C(G) \setminus \bar{C}|\leq |C(G)|-|C(G) \setminus \bar{C}|=|\bar{C}|$.
If $\bar{r}\leq 2$, then (using $|\bar{C}|\geq 2$) taking any $\bar{r}$ edges with different colours in $\bar{C}$ gives the required path forest. Thus, we may assume that $\bar{r}\geq 3$.
Furthermore, as $\bar{r}=n-|C(G) \setminus \bar{C}|$, and $G$ is complete, every vertex in $G$ is adjacent to at least $(n-1)-|C(G) \setminus \bar{C}|=\bar{r}-1$ edges with colour in $\bar{C}$.

Let $V_F=V(F)$ and $C_F=C(F)$.
Any edge in $G$ with colour in $\bar{C}\setminus C_F$ and both endpoints in $V\setminus V_F$ can be added to $F$ to  contradict the maximality of $e(F)$. Thus, all the edges with colour from $\bar{C}\setminus C_F$ with one endpoint in $V\setminus V_F$ must have their other endpoint in $V_F$. Since every colour in $\bar{C}$ appears on at most $n/100$ edges in $G$, there are at most $|C_F|\cdot n/100 \leq (\bar{r}-1)\cdot n/100$ edges with colour in $C_F$.
As every vertex in $V\setminus V_F$ is adjacent to at least $\bar{r}-1\geq 2\bar{r}/3$ edges with colour in $\bar{C}$, and $|V\setminus V_F|\geq (1-2\eps) n$ (as $|V_F|\leq 2\bar{r}$), the number of edges with colour in $\bar{C}\setminus C_F$ in $G[V\setminus V_F,V_F]$ is at least
\begin{equation}\label{eq:VVPVPedges}
(\bar{r}-1)\cdot |V\setminus V_F|-2\cdot (\bar{r}-1)\cdot \frac{n}{100}\geq \frac{2\bar{r}}{3}\left((1-2\eps)n-\frac{n}{50}\right)\geq \frac{3\bar{r}n}{5}.
\end{equation}

Now, observe that any vertex in $V_F$ which is an endpoint of a path in $F$ has (due to the maximality of $e(F)$) no neighbouring edges to $V\setminus V_F$ with colour in $\bar{C}\setminus C_F$. Take a matching $M$ in $E(F)$ of at most $e(F)/2\leq \bar{r}/2$ which covers the interior vertices of the paths in $F$. Observe that, for each $xy\in M$, there are at most $n$ edges from $\{x,y\}$ to $V\setminus V_F$ with colour in $\bar{C}\setminus C_F$. Otherwise, assuming $x$ is incident to at most half of these edges, it is easy to pick $x'\in V\setminus V_F$ and then $y'\in V\setminus V_F$ so that $\{xx',yy'\}$ is a rainbow matching with colours in $\bar{C}\setminus C_F$, whereupon $F-xy+xx'+yy'$ contradicts the maximality of $e(F)$. Therefore, the number of edges with colour in $\bar{C}\setminus C_F$ in $G[V\setminus V_F,V_F]$ is at most $|M|\cdot n\leq \bar{r}n/2$. This contradicts \eqref{eq:VVPVPedges}.
\claimproofend

Let $r^*=\max\{0,n-|C'|\}\leq \eps n$. As $\bar{C} \subseteq \hat{C}=C\setminus C'$, and every colour in $\hat{C}\setminus \bar{C}$ has at least $n/100$ edges in $G$, by Claim~\ref{claim:initialpathforest}, we can greedily extend the path forest found there to a path forest $F\subseteq G$ with $C(F)\subseteq \hat{C}$, $e(F)\geq r^*$ and no isolated vertices. Letting $F'$ be $F$ with any vertices in $\hat{V}\setminus V(F)$ added as isolated vertices, we get a path forest $F'\subseteq G$ with $r^*$ edges and at most $2r^*+|\hat{V}|\leq 3\sqrt{\eps}n$ vertices, such that $F'$ is rainbow with colours in $\hat{C}$ and such that $\hat{V}\subseteq V(F')$.

Let $r'\leq 3\sqrt{\eps}n$ be the number of paths in $F'$ (including paths with 1 vertex) and label vertices $u_i,v_i$, $i\in [r']$, so that $F'$ consists of one $u_iv_i$-path for each $i\in [r']$. Using that $|F'|\leq 3\sqrt{\eps}n$, pick distinct $v_0,u_{r'+1}\in V\setminus V(F')$, and let $x=v_0$ and $y=u_{r'+1}$.
For each $i\in [r'+1]$ in turn, pick a vertex
\[
w_i\in V\setminus \{v_0,v_1,\dots,v_{r'},u_1,u_2,\dots,u_{r'+1},w_1,w_2,\dots,w_{i-1}\}
\]
such that $v_{i-1}w_i$ and $w_iu_i$ have colours not in $C(F)\cup \{c(v_{j-1}w_j),c(w_ju_j):j\in [i-1]\}$, noting that this is possible as these conditions rule out at most $|F|+2(|C(F)|+2r')\leq 15\sqrt{\eps}n$ vertices in $V$ for $w_i$.
Finally, then, let $P$ be the path formed by adding the edges $v_{j-1}w_j,w_ju_j$, $j\in [r'+1]$, to $F'$, noting that this is a rainbow $xy$-path by construction.

As $P$ has at most $|F|+2(|C(F)|+2r')\leq 15\sqrt{\eps}n$ vertices, we have that \ref{path_xy} holds. As $\{x,y\}= \{v_0,u_{r'+1}\}\subseteq V\setminus V(F')\subseteq V\setminus \hat{V}$ and $\hat{V}\subseteq V(F')\subseteq V(P)$, we have that \ref{path_vertices} holds. Finally, as $F'\subseteq P$, $P$ uses at least $r^*\geq n-|C'|$ colours from $\hat{C}$, and thus \ref{Pcolnum} holds. Thus, $P$ satisfies \ref{path_vertices}--\ref{path_xy}, as required.
\end{proof}

It remains only to combine Lemma~\ref{lem:rare-cols} and Theorem~\ref{thm:furthermore} to prove Theorem \ref{thm:reduction}.

\begin{proof}[Proof of Theorem~\ref{thm:reduction}]
Let $1/n, \, \eps \llpoly \log^{-1}n$, and let $G$ be a properly coloured copy of $K_n$ which has an $\eps$-near-optimal colouring with $m\geq n-1$ colours. If $m=n-1$, then every colour appears exactly $n/2$ times and thus the colouring is $(n,\eps)$-typical, and so Theorem~\ref{thm:reduction} follows directly from Theorem~\ref{thm:furthermore}(1) and (3) (applied by first replacing each edge $xy\in E(G)$ by the directed edges $xy$ and $yx$, both with colour $c(xy)$).

Suppose, then, that $m\geq n$. Let $\eta$ be such that $\eps \llpoly \eta \llpoly\log^{-1}n$. 
By Lemma \ref{lem:rare-cols}, we can find $x,y\in V(G)$ and a rainbow $xy$-path $P$ in $G$ along with an $(n,\eta)$-typical subgraph $H\subseteq G$ with $V(H)=(V(G)\setminus V(P)) \cup \{x,y\}$, $C(H)$ is disjoint from $C(P)$, and $|C(H)|\geq |H|-1$. Add a dummy edge between $x$ and $y$ of a colour distinct from all the colours in $C(H)$. By the `furthermore' part of Theorem~\ref{thm:furthermore}, we can find in $H$ a rainbow Hamilton path, and a rainbow cycle of length $|H|-101$, using the dummy edge $xy$. The path and cycle in $H$ combine with $P$, respectively, to form a rainbow Hamilton path and a rainbow cycle of length $n-101$ in $G$, as desired.  \end{proof}

It remains to prove Theorem~\ref{thm:GS-intro}.
\begin{proof}[Proof of Theorem~\ref{thm:GS-intro}]
    Let $\overleftrightarrow{K_n}$ be optimally coloured, suppose $n$ is sufficiently large and take some $\eps$ such that $\eps\llpoly \log^{-1}n$. Denote by $G$ the complete digraph obtained by removing the loops from $\overleftrightarrow{K_n}$. Each colour class in $\overleftrightarrow{K_n}$ is a $1$-factor, and there are exactly $n$ colours. Let $\mathcal{D}$ be the set of colours appearing on the diagonal (as loops) at least $\eps n$ times, noting $|\mathcal{D}|\leq 1/\eps$. If $|\mathcal{D}|\leq 1$, then $G$, upon the removal of the diagonal, contains at least $n-1$ colours and is $(n,\eps)$-typical, so we can conclude immediately from Theorem~\ref{thm:furthermore}(1). Suppose, then, that $|\mathcal{D}|\geq 2$, in which case each colour in $\mathcal{D}$ appears as a loop at most $(1-\eps)n$ times, and thus appears on non-loop edges at least $\eps n$ times. As $\eps n \gg 1/\eps$, we can greedily find a (directed) exactly-$\mathcal{D}$-rainbow matching $M$ in $G$. By applying Lemma~\ref{Connecting_lemma} $|\mathcal{D}|-1$ times, we can find a directed rainbow path $P$ of length at most $10/\eps$ which extends the matching $M$. Note that, by deleting the colours and the internal vertices of $P$ and the colours in $\mathcal{D}$ from $G$, we obtain an $(n',2\eps)$-typical digraph $G'$ with $n'$ vertices and at least $n'-1$ colours. Let $(x,y)$ be the endpoints of $P$, and let us add the dummy edge from $x\to y$ to $G'$ with a new colour. By the `furthermore' part of Theorem~\ref{thm:furthermore}, we find a rainbow Hamilton path using the dummy edge, which implies the existence of a rainbow Hamilton path in $G$. Therefore, we can conclude the proof in the $|\mathcal{D}|\geq 2$ case also (in fact, in this case, we found a longer path than necessary).
\end{proof}
 \par \noindent \textbf{Acknowledgements.} We would like to thank Alistair Benford for valuable comments on an early draft of this manuscript. The third author thanks Cosmin Pohoata for pointing him towards Jamison's conjecture.
 \medskip
 \par \noindent \textbf{Statement of AI use.} ChatGPT 5.6 was used to proofread this paper. All of the ideas and writing are due entirely to the authors.


\bibliographystyle{abbrv}
\bibliography{rbs}
\appendix{}

\section{Proof of Theorem~\ref{thm:BPW}}\label{appendix:A}


Here we give a proof of Theorem~\ref{thm:BPW}. To start, we recall the notion of \textit{proper-pseudorandomness} from \cite{montgomery2023proof}. Roughly speaking, an $(n,p,\eps)$-properly-pseudorandom bipartite graph is a properly coloured pseudorandom graph for which the proof strategy in \cite{montgomery2023proof} goes through. The strategy in \cite{montgomery2023proof} is based on the absorption method, hence the proof requires working with various small subgraphs (gadgets) with certain properties. The formal definition of proper-pseudorandomness bounds the size of these small subgraph families.


\subsection{Proper-pseudorandomness}\label{sec:proper-pseud}
\par In this section, we give a list of seven properties that make up the definition of proper-pseudorandomness that are listed as \textbf{F1}--\textbf{F7} in \cite{montgomery2023proof}, with a few slight alterations as we outline below.
\par Firstly, in its definition of proper-pseudorandomness, \cite{montgomery2023proof} uses a parameter $\alpha=p^{12}/10^{100}$. This parameter is chosen to be sufficiently small as a function of $p$ that the two main coloured bipartite graphs used in \cite{montgomery2023proof} are $(n,p,\eps)$-properly-pseudorandom under this definition, where the values $p=1$ and $p=1/3$ are used respectively in \cite[Section~3.6]{montgomery2023proof} and \cite[Section~10]{montgomery2023proof}. The parameter $\alpha$ is only chosen to be a fixed function of $p$ to reduce the variables involved in the definition of proper-pseudorandomness.  We will need to take a smaller value of $\alpha$, to record the properties of our bipartite coloured graphs. The proofs of the results from \cite{montgomery2023proof} (recalled in Section~\ref{sec:recallthms}) can be seen to follow with only the most minor adjustments for any constants $0<\alpha \ll p$, as long as $n$ is sufficiently large.
\par We also provide a slightly altered definition of \ref{prop-pseud-add-new-2} compared to \cite[\textbf{F7}]{montgomery2023proof} in that our \ref{prop-pseud-add-new-2} here assumes a slightly weaker property. As will be reflected in the published version of \cite{montgomery2023proof}, the proof in \cite{montgomery2023proof}, as written, uses only this weaker property. The stronger property originally stated in \cite[\textbf{F7}]{montgomery2023proof} is presented here as \ref{prop-alt}, and as we will discuss in Section~\ref{sec:recallthms}, if this stronger property is assumed, one can derive a slightly strengthened version of the main theorem of \cite{montgomery2023proof}.
\par Another very slight alteration is in the property \ref{prop-pseud-add-2} below. In \cite{montgomery2023proof}, a stronger \ref{prop-pseud-add-2} is stated that does not need to designate, given $c_0$, two exceptional vertices $u_0$ and $v_0$ for which the property in \ref{prop-pseud-add-2} may not hold. However, the proofs in \cite{montgomery2023proof} go through with the weaker version of \ref{prop-pseud-add-2} we state here. Indeed, \cite{montgomery2023proof} uses only a single value of $c_0$ as an `identity colour' throughout the proof, so $c_0$ is fixed in the very beginning. To see that the weaker \ref{prop-pseud-add-2} does not create any difficulties, observe that, given $c_0$, we can greedily find a rainbow matching of size $2$ (not using $c_0$) saturating the exceptional vertices $u_0$ and $v_0$. Deleting the vertices and colours of this matching, we see that the stronger version of \ref{prop-pseud-add-2} is restored in the remainder graph (with a new value of $\alpha':=\alpha/2$), and the proof in \cite{montgomery2023proof} goes through exactly as written (the only two instances in which \ref{prop-pseud-add-2} is used in \cite{montgomery2023proof} are in the proofs of \cite[Claims~6~and~7]{montgomery2023proof}).

Let $G$ be a properly coloured bipartite graph, with vertex classes $A$ and $B$. Below we define certain properties parametrised by $n,p,\eps$, and $\alpha:=p^{10^{10}}/10^{10^{10}}$, where an edge coloured graph is said to be \emph{exactly-$C$-rainbow} if it uses every colour in $C$ exactly once.
\stepcounter{propcounter}
\begin{enumerate}[label = {\textbf{\Alph{propcounter}\arabic{enumi}}}]
\item $|A|=|B|=n$ and $n\leq |C(G)|\leq (1+\eps)n$. \label{prop-pseud-basic-1}

\item $\mathcal{H}(G)$ is $(n,p,\eps)$-typical (as defined in Section~\ref{sec:typical}).\label{prop-pseud-basic-2new}

\item For each $c\in C(G)$ and $e\in E_c(G)$, for all but at most $\sqrt{n}$ edges $f\in E_c(G)\setminus \{e\}$, there are at least $\alpha n^2$ pairs $(S_1,S_2)$ such that $S_1$ and $S_2$ are vertex-disjoint rainbow 4-cycles, $e\in E(S_1)$, $f\in E(S_2)$, and
the colour sets of the neighbouring edges of $e$ in $S_1$ and the neighbouring edges of $f$ in $S_2$ are the same.\label{prop-pseud-abs-prime}
\item For any $c_0\in C(G)$ there exist $u_0\in A$ and $v_0\in B$ such that the following holds for all $u\in A\setminus\{u_0\}$ and $v\in B\setminus\{v_0\}$. There are disjoint sets ${V}_1,\ldots,{V}_{\alpha n}\subset V(G)\setminus \{u,v\}$ and disjoint sets $C_1,\ldots, C_{\alpha n}$ in $C(G)\setminus\{c_0\}$ such that,
for each $i\in [\alpha n]$, $|{V}_i|=4$, $|C_i|=3$, $G[V_i]$ contains 2 colour-$c_0$ edges and $G[\{u,v\}\cup V_i]$ contains an exactly-$C_i$-rainbow matching in $E(G)\setminus \{uv\}$.\label{prop-pseud-add-2}
\item For each distinct $c_0,d\in C(G)$, there are disjoint sets $V_1,\ldots,V_{\alpha n/12}$ in $V(G)$ and disjoint sets $C_1,\ldots,$ $C_{\alpha n/12}$ in $C(G)\setminus\{c_0,d\}$,
so that, for each $i\in [\alpha n/12]$, $|V_i|=8$, $|C_i|=3$, and $G[V_i]$ contains a matching of $4$ colour-$c_0$ edges
and an exactly-$(C_i\cup \{d\})$-rainbow matching.\label{prop-pseud-add-3}

\item  For any $c_0\in C(G)$, $0\leq k\leq 20$, and any $\bar{C}\subset C(G)\setminus \{c_0\}$ with $|\bar{C}|\geq 5k$, there are disjoint sets $\bar{V}_1,\ldots,\bar{V}_{\alpha n}\subset V(G)$
such that,
for each $i\in [\alpha n]$, $|\bar{V}_i|=2k+2$ and $G[\bar{V}_i]$ contains both a matching of $k+1$ colour-$c_0$ edges and a $\bar{C}$-rainbow matching with $k$ edges. \label{prop-pseud-add-new-1}
\item Setting $k=100$, for each $c_0\in C(G)$, there is some $r\in \N$ and disjoint sets ${V}_1,\ldots,{V}_{r}$ in $V(G)$ and disjoint sets $C_1,\ldots,C_{r}$ in $C(G)\setminus\{c_0\}$
 with $|V_i|=2k$ and $|C_i|=k$ for each $i\in [r]$ such that $G[V_i]$ contains an exactly-$C_i$-rainbow matching and a perfect matching of colour-$c_0$ edges, and the following holds. For every $\bar{C}\subset C(G)\setminus \{c_0\}$ with $|\bar{C}|\leq k$, for at least $\alpha^2n$ values of $i\in [r]$, there are vertex-disjoint sets $\bar{V}_1,\ldots,\bar{V}_{\alpha n}\subset V(G)$ such that,
for each $j\in [\alpha n]$, $|\bar{V}_j|=2k+2|\bar{C}|$ and $G[\bar{V}_j]$ contains both a matching of $k+|\bar{C}|$ colour-$c_0$ edges and a $(\bar{C}\cup C_i)$-rainbow matching with $k+|\bar{C}|-1$ edges.
\label{prop-pseud-add-new-2}

\end{enumerate}

\begin{defn}[Proper-pseudorandomness]\label{defn:pseud}
    Let $G$ be a properly coloured bipartite graph, with vertex classes $A$ and $B$. We call $G$ \emph{$(n,p,\eps)$-properly-pseudorandom} if the properties \ref{prop-pseud-basic-1} through \ref{prop-pseud-add-new-2} all hold.
\end{defn}

The following weaker variant of \ref{prop-pseud-basic-1} and the stronger variant of \ref{prop-pseud-add-new-2} will also be important to record.

\begin{enumerate}[label = {{\textbf{C1*}}}]
\item $|A|=|B|=n$ and $n-1\leq |C(G)|\leq (1+\eps)n$. \label{prop-alt-basic}
\end{enumerate}

\begin{enumerate}[label = {{\textbf{C7*}}}]
\item Setting $k=100$, for each $c_0\in C(G)$, there is some $r\in \N$ and disjoint sets ${V}_1,\ldots,{V}_{r}$ in $V(G)$ and disjoint sets $C_1,\ldots,C_{r}$ in $C(G)\setminus\{c_0\}$
 with $|V_i|=2k$ and $|C_i|=k$ for each $i\in [r]$ such that $G[V_i]$ contains an exactly-$C_i$-rainbow matching and a perfect matching of colour-$c_0$ edges, and the following holds. For every $\bar{C}\subset C(G)\setminus \{c_0\}$ with $|\bar{C}|\leq k$, for at least $\alpha^2n$ values of $i\in [r]$, there are vertex-disjoint sets $\bar{V}_1,\ldots,\bar{V}_{\alpha n}\subset V(G)$ such that,
for each $j\in [\alpha n]$, $|\bar{V}_j|=2k+2|\bar{C}|+2$ and $G[\bar{V}_j]$ contains both a matching of $k+|\bar{C}|+1$ colour-$c_0$ edges and a $(\bar{C}\cup C_i)$-rainbow matching with $k+|\bar{C}|$ edges. \label{prop-alt}
\end{enumerate}

\begin{defn}[Strong-proper-pseudorandomness]
    Let $G$ be a properly coloured bipartite graph, with vertex classes $A$ and $B$. We call $G$ \emph{$(n,p,\eps)$-strongly-properly-pseudorandom} if the properties \ref{prop-alt-basic}, \ref{prop-pseud-basic-2new} to \ref{prop-pseud-add-new-1}, and \ref{prop-alt} all hold.
\end{defn}


\subsection{Rainbow matchings in pseudorandom properly-coloured bipartite graphs}
\label{sec:recallthms}

Here we state the three theorems from \cite{montgomery2023proof} that we require, which all find large rainbow matchings in suitably pseudorandom properly coloured 
bipartite graphs. The first is the main technical result of \cite{montgomery2023proof}, which finds an $(n-1)$-edge rainbow matching when the graph contains $n$ vertices in each part and at least $n$ colours. The second takes a slightly stronger pseudorandom condition, so that such a matching can be found using only $n-1$ colours (i.e., so that all the colours may be used exactly once).
The last such result finds a slightly larger $n$-edge rainbow matching under weaker pseudorandom conditions when slightly more colours are available.

\begin{theorem}[{\cite[Theorem 3.1]{montgomery2023proof}}]\label{thm-technical} Let $1/n\ll p\leq 1$ and let $\eps\llpoly\log^{-1}n$. Then, any $(n,p,\eps)$-properly-pseudorandom bipartite graph $G$ contains a rainbow matching with $n-1$ edges.
\end{theorem}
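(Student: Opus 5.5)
This is \cite[Theorem 3.1]{montgomery2023proof}, so the plan is to rerun the absorption argument of \cite{montgomery2023proof} and check that each gadget it uses is supplied by one of \ref{prop-pseud-basic-1}--\ref{prop-pseud-add-new-2}. The one change is the constant: $\alpha$ is now $p^{10^{10}}/10^{10^{10}}$ rather than $p^{12}/10^{100}$. That only shrinks the number of available gadgets by a constant factor, which is harmless for large $n$.

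\emph{Setup.} Fix an ``identity colour'' $c_0$. Using the weak form of \ref{prop-pseud-add-2}, first greedily take a rainbow $2$-edge matching, not using $c_0$, that covers the exceptional vertices $u_0,v_0$. Delete its vertices and colours; the strong form of \ref{prop-pseud-add-2} then holds in the rest with $\alpha/2$. Next, split the vertices and colours at random into a small reservoir, an absorbing part and a bulk part. By Chernoff's bound (Lemma~\ref{chernoff-bound}) and the typicality of $\mathcal{H}(G)$ from \ref{prop-pseud-basic-2new}, each random piece inherits typicality and keeps a constant fraction of every gadget family counted in \ref{prop-pseud-abs-prime}--\ref{prop-pseud-add-new-2}.

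\emph{Absorber.} Inside the absorbing part, build a distributive absorber. The colour-$c_0$ edges act as placeholders. The $4$-cycle pairs from \ref{prop-pseud-abs-prime} serve as ``switchers'': each lets one colour be traded for a $c_0$-edge while keeping the structure rainbow. I would link these switchers along a sparse bounded-degree robust bipartite template (a graph with the property that deleting any small set of one side still leaves a perfect matching). The resulting gadget absorbs any small set of leftover colours, provided the leftover vertices have already been matched in $c_0$-compatible pairs.

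\emph{Bulk and cleanup.} Cover the bulk almost perfectly with the nibble (Theorem~\ref{thm:RSBcoveringstephyper}). This leaves $o(n)$ uncovered vertices and colours, more precisely $n^{1-\Omega(1)}$ or polylogarithmic after iterating, which is why the paper needs $\eps\llpoly\log^{-1}n$. Then correct the leftover with the addition structures. \ref{prop-pseud-add-2} attaches a leftover vertex pair $u,v$ by replacing two $c_0$-edges with a rainbow matching on $\{u,v\}\cup V_i$. \ref{prop-pseud-add-3} swaps in a specific leftover colour $d$. \ref{prop-pseud-add-new-1} balances counts using reservoir colours. After this, the leftover has been turned into exactly the form the absorber accepts, and the absorber finishes the matching.

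\emph{Main obstacle.} The hard step is the final parity and counting correction. The absorber needs the number of leftover colours to match exactly the number of $c_0$-slots, and one $c_0$-edge is always missing. This is where \ref{prop-pseud-add-new-2} with $k=100$ enters: it converts a block of $k+|\bar C|$ colour-$c_0$ edges into a rainbow matching with one fewer edge. That loss of one edge is exactly why the conclusion is $n-1$ rather than $n$; the stronger variant \ref{prop-alt} would avoid it. I would carry out this step first: fix the index $i$ supplied by \ref{prop-pseud-add-new-2} for the eventual $\bar C$, reserve its $\alpha n$ disjoint sets in advance, and verify that the choices made during absorption never exhaust them.
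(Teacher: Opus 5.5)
Your overall route is the paper's. The statement is imported from \cite[Theorem 3.1]{montgomery2023proof}, and the only real work is checking that the hypotheses listed here suffice for that argument. Your handling of the smaller $\alpha$, and of the exceptional pair $u_0,v_0$ in \ref{prop-pseud-add-2}, matches the paper's remark exactly: you saturate them greedily by a rainbow $2$-edge matching avoiding $c_0$, delete it, and recover the strong form with $\alpha/2$.

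However, ``the one change is the constant'' is false, and the change you miss is the one your final step rests on. Theorem 3.1 of \cite{montgomery2023proof} was stated under the original \textbf{F7}, which is \ref{prop-alt} here, whereas \ref{prop-pseud-add-new-2} is strictly weaker. So the black box does not apply verbatim. You need the fact, which the paper asserts with reference to the published version of \cite{montgomery2023proof}, that the argument there only ever uses the weaker form.

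Your account of the difference is also wrong. Compare the definitions directly:
\begin{itemize}
\item \ref{prop-pseud-add-new-2} trades $k+|\bar C|$ colour-$c_0$ edges for a $(\bar C\cup C_i)$-rainbow matching with $k+|\bar C|-1$ edges;
\item \ref{prop-alt} trades $k+|\bar C|+1$ colour-$c_0$ edges for one with $k+|\bar C|$ edges.
\end{itemize}
Both lose exactly one edge, so \ref{prop-alt} does not ``avoid'' the loss. What it buys is a rainbow matching with $|\bar C\cup C_i|$ edges, which uses every colour of $\bar C\cup C_i$ instead of leaving one spare. That is why the paper pairs it with weakening \ref{prop-pseud-basic-1} to \ref{prop-alt-basic} (only $n-1$ colours) in Theorem~\ref{thm-strong-technical}, whose conclusion is still $n-1$ edges. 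Reaching $n$ edges needs the $100$ spare colours of Theorem~\ref{thm-technical-variant}.

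Your plan to fix in advance, for the eventual $\bar C$, the index $i$ supplied by \ref{prop-pseud-add-new-2} and reserve its $\alpha n$ sets also inverts the quantifiers. Here $\bar C$ is the set of leftover colours, known only at the end. In \ref{prop-pseud-add-new-2}, only the family $(V_i,C_i)_{i\in[r]}$ is independent of $\bar C$; the good indices $i$ and the sets $\bar V_1,\dots,\bar V_{\alpha n}$ depend on it. So what can be set aside in advance is only that family, each $G[V_i]$ coverable either by its exactly-$C_i$-rainbow matching or by colour-$c_0$ edges. The robustness built into the property ($\alpha^2 n$ good indices, $\alpha n$ disjoint $\bar V_j$) is precisely what guarantees that a compatible $i$ and $\bar V_j$ survive once $\bar C$ is revealed.

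More generally, the gadget-by-gadget check you promise is not actually carried out, and parts of your sketch are inaccurate. For instance, \ref{prop-pseud-abs-prime} involves no colour-$c_0$ edges at all, so it cannot be the gadget that trades a colour for a $c_0$-edge.
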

\par As already remarked earlier in Section~\ref{sec:proper-pseud}, \cite[Theorem 3.1]{montgomery2023proof}, as written, technically assumes the stronger property \ref{prop-alt} instead of \ref{prop-pseud-add-new-2} as we state here, but the proof of \cite[Theorem 3.1]{montgomery2023proof} works exactly as written with the weaker \ref{prop-pseud-add-new-2} as we formally assert above (and as will be reflected in the published version of \cite{montgomery2023proof}). We also remark that \cite[Theorem 3.1]{montgomery2023proof} also makes the additional assumption that $1/n\llpoly \eps $, but this is simply to simplify the presentation of the hierarchy and is not a required property. Indeed, a smaller value of $\eps$ only serves to strengthen the typicality properties of the graph, so the formulations we give above and below that omit this assumption are equivalent to the formulations in \cite{montgomery2023proof}.
\par We also observe from the proof of \cite[Theorem~3.1]{montgomery2023proof} (and as will be updated in the published version of that proof~\cite{montgomerypersonal}) that it is sufficient for the graph $G$ to have only at least $n-1$ colours, as opposed to at least $n$ colours as claimed there, under our strengthened proper-pseudorandomness. That is, we can weaken \ref{prop-pseud-basic-1} to \ref{prop-alt-basic} if we strengthen \ref{prop-pseud-add-new-2} to \ref{prop-alt} in compensation. To explain this further, we first state the key property in the proof used to find a matching saturating all remaining colours (from \cite[Section 9.4]{montgomery2023proof}).
\begin{enumerate}[label = {{\textbf{W3}}}]
\item For every $\bar{C}\subset C(G)\setminus \{c_0\}$ with $|\bar{C}|=k$, for at least $\alpha^2n$ values of $i\in [r]$, there are vertex-disjoint sets $\bar{V}_1,\ldots,\bar{V}_{\alpha n}\subset V$ such that,
for each $j\in [\alpha n]$, $|\bar{V}_j|=4k+2$  and $G[\bar{V}_j]$ contains both a matching of $2k+1$ colour-$c_0$ edges and a $(\bar{C}\cup C_i)$-rainbow matching with $2k$ edges.
\end{enumerate}
As the rainbow matching found using this property uses exactly all the colours in $\bar{C}\cup C_i$, this allows, with very minor adjustments, the overall $(n-1)$-edge rainbow matching found to use all of the colours if necessary, so that only at least $n-1$ colours overall are needed.

\begin{theorem}\label{thm-strong-technical} Let $1/n\ll p\leq 1$ and let $ \eps\llpoly\log^{-1}n$. Then, any $(n,p,\eps)$-strongly-properly-pseudorandom bipartite graph contains a rainbow matching with $n-1$ edges.
\end{theorem}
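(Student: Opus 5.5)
The plan is to follow the proof of Theorem~\ref{thm-technical} from \cite{montgomery2023proof} essentially verbatim. The only change is the final stage, where the remaining colours are saturated: there we use the stronger property \ref{prop-alt} in place of \ref{prop-pseud-add-new-2}, and this compensates for the weaker colour count allowed by \ref{prop-alt-basic}.

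The proof of \cite[Theorem 3.1]{montgomery2023proof} runs in three steps. First, it fixes an identity colour $c_0$ and sets aside absorbing and addition structures. These come from \ref{prop-pseud-abs-prime}, \ref{prop-pseud-add-2}, \ref{prop-pseud-add-3}, \ref{prop-pseud-add-new-1}, together with the structure $(V_i,C_i)_{i\in[r]}$ from \ref{prop-pseud-add-new-2}. Second, it finds an almost-perfect rainbow matching with the nibble, using typicality \ref{prop-pseud-basic-2new}. Third, it corrects the leftover step by step with the addition structures. At every step the colour $c_0$ plays the role of a placeholder, so that eventually all vertices except one per side are covered.

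None of the properties \ref{prop-pseud-basic-2new}--\ref{prop-pseud-add-new-1} depend on whether $|C(G)|\ge n$ or $|C(G)|\geq n-1$. So all steps before the final colour-saturation (\cite[Section 9.4]{montgomery2023proof}) go through unchanged under \ref{prop-alt-basic}. I would check this by auditing every place where the count $|C(G)|\ge n$ enters. I expect the only place is the final bookkeeping: with $n$ colours one may leave one colour unused, whereas with exactly $n-1$ colours an $(n-1)$-edge rainbow matching must use every colour.

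The final stage reduces to a bounded set $\bar C$ of at most $k$ colours still to be used, with a matching in $c_0$ placeholders to be replaced. In \cite{montgomery2023proof} this is done through property \textbf{W3}. Its matching uses $2k$ colours on $4k+2$ vertices, i.e.\ it leaves one $c_0$-edge's worth of vertices uncovered and uses exactly the colours of $\bar C\cup C_i$. I would show that \ref{prop-alt} implies the analogue of \textbf{W3} with the parameters shifted by one. Concretely, \ref{prop-alt} provides a $(\bar C\cup C_i)$-rainbow matching with $k+|\bar C|$ edges inside a set spanned by $k+|\bar C|+1$ colour-$c_0$ edges. Since $|C_i|=k$, this matching uses \emph{every} colour in $\bar C\cup C_i$ while still dropping exactly one edge of vertex-coverage. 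Then I would swap: remove the $c_0$-matching on $\bar V_j$ (and the $c_0$-matching on $V_i$), and insert the rainbow matchings (the one on $\bar V_j$ and the exactly-$C_i$ one on $V_i$ used elsewhere). This covers all colours and loses exactly one vertex per side in total, which yields an $(n-1)$-edge rainbow matching using all $n-1$ colours.

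The main obstacle is the accounting in this last swap. One must ensure that the deficit of one edge is incurred exactly once overall, rather than once from the colour shortfall and once from the \textbf{W3} step. Equivalently, the earlier stages should be run so that, when the colour count is $n-1$, they end with exactly $|\bar C|$ unused colours and $k+|\bar C|+1$ placeholder edges to be replaced. I would try first to set up the earlier stages to arrive at precisely this configuration, and then apply \ref{prop-alt} with the matching $\bar V_j$ disjoint from the used vertices. This disjointness is possible because \ref{prop-alt} offers $\alpha n$ disjoint choices while only $O(\log n)$ structures have been used.
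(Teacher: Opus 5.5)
Your proposal follows the paper's argument: run the proof of \cite[Theorem~3.1]{montgomery2023proof} unchanged, and use that \ref{prop-alt} gives a $(\bar C\cup C_i)$-rainbow matching using \emph{every} colour of $\bar C\cup C_i$ at the cost of one uncovered vertex per side. (For $|\bar C|=k$ this property is exactly \textbf{W3}, so no shift of parameters is needed.) Hence the final $(n-1)$-edge matching can use all colours, and $n-1$ colours suffice.

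One slip: as you wrote the swap, $V_i$ ends up carrying its exactly-$C_i$ matching. It must instead end up carrying its colour-$c_0$ matching, so that $C_i$ is free for $\bar V_j$; otherwise every colour of $C_i$ would be used twice, since the new matching on $\bar V_j$ already uses all of them.
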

Below is the final theorem we need that says we can find a matching saturating the vertices if there are enough `spare' colours.

\begin{theorem}[{\cite[Theorem 3.2]{montgomery2023proof}}]\label{thm-technical-variant} Let $1/n\ll p\leq 1$ and let $\eps\llpoly\log^{-1}n$. Then, any $(n,p,\eps)$-properly-pseudorandom bipartite graph $G$ with at least $n+100$ colours has a rainbow matching with $n$ edges.
\end{theorem}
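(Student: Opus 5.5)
The statement to prove is Theorem~\ref{thm-technical-variant}. It is \cite[Theorem 3.2]{montgomery2023proof} specialised to our slightly modified notion of proper-pseudorandomness. So my plan is not to reprove it from scratch. Instead I will check that the proof in \cite{montgomery2023proof} goes through verbatim under Definition~\ref{defn:pseud}. There are three differences from the hypotheses there: the smaller constant $\alpha=p^{10^{10}}/10^{10^{10}}$, the weakened \ref{prop-pseud-add-2} with exceptional vertices $u_0,v_0$, and the weakened \ref{prop-pseud-add-new-2} in place of the original \textbf{F7}.

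\textbf{Step 1 (the constant $\alpha$).} The argument of \cite{montgomery2023proof} uses $\alpha$ only as a lower bound on the number of disjoint gadgets (absorbers, switchers, colour-adders). Each such use needs only $\alpha n\gg \eps n\log n$ gadgets and $1/n\ll\alpha$. Both hold because $\alpha$ depends only on $p$ and $\eps\llpoly\log^{-1}n$. So every counting step survives, provided the hierarchy is taken with $\eps\llpoly\log^{-1}n$ and $1/n\ll p$.

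\textbf{Step 2 (the exceptional vertices in \ref{prop-pseud-add-2}).} The identity colour $c_0$ is fixed once, at the start of the proof. Given $c_0$, I take the exceptional $u_0\in A$ and $v_0\in B$. I greedily choose a rainbow $2$-edge matching that covers $u_0$ and $v_0$, avoids $c_0$, and uses no edge $u_0v_0$. This is possible since degrees are $(1\pm\eps)pn$ by \ref{prop-pseud-basic-2new}. I then delete its four vertices and two colours.

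The remainder $G'$ has $n-2$ vertices per side and at least $n-2+100$ colours. It is $(n-2,p,2\eps)$-properly-pseudorandom with $\alpha/2$ and the strong form of \ref{prop-pseud-add-2}, because removing $O(1)$ vertices and colours destroys only $O(1)$ gadgets in each property. The rainbow matching of size $n-2$ in $G'$, together with the two greedy edges, gives $n$ edges.

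\textbf{Step 3 (property \ref{prop-pseud-add-new-2}).} This is the main point to verify. In the proof of \cite[Theorem 3.2]{montgomery2023proof}, the final absorption step handles at most $100$ leftover colours. It is in this step that \ref{prop-pseud-add-new-2} is invoked, through the \textbf{W3}-type switching. I will check that this step only needs $(\bar C\cup C_i)$-rainbow matchings of size $k+|\bar C|-1$ alongside $k+|\bar C|$ colour-$c_0$ edges, which is exactly what \ref{prop-pseud-add-new-2} provides.

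The reason is that with $100$ spare colours we need not saturate every colour, only every vertex. Each switch exchanges a colour-$c_0$ matching for a rainbow one with one fewer edge, and this deficit is absorbed by the spare colours. This is the step I expect to be the obstacle: tracking that the counts in the final cleanup balance with one fewer rainbow edge per gadget. I would first try to confirm it by direct comparison with \cite[Section 9.4]{montgomery2023proof}, noting that \textbf{W3} as written already has the ``$-1$'' form.

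Granted Steps 1--3, the theorem follows, since the remainder of the proof of \cite[Theorem 3.2]{montgomery2023proof} uses only \ref{prop-pseud-basic-1}--\ref{prop-pseud-add-new-1} in their original form.
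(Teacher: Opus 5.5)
Your plan matches the paper's treatment. The paper also imports the statement from \cite[Theorem 3.2]{montgomery2023proof}, and Appendix~\ref{appendix:A} makes the same three adjustments you do: any smaller $\alpha\ll p$ is fine; the exceptional $u_0,v_0$ in \ref{prop-pseud-add-2} are removed by deleting a rainbow $2$-edge matching avoiding $c_0$, which restores the strong form with $\alpha/2$; and the cited proof as written only uses the weak \ref{prop-pseud-add-new-2}.

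One correction to your gloss in Step~3. Trading $t+1$ colour-$c_0$ edges for $t$ rainbow edges is common to \ref{prop-pseud-add-new-2}, \ref{prop-alt} and \textbf{W3}. Moreover, \textbf{W3} is the strong, colour-saturating form, so it is not evidence for the weak property. What \ref{prop-pseud-add-new-2} actually gives up is that one colour of $\bar{C}\cup C_i$ may stay unused. That is harmless whenever there are at least $n$ colours, as already in Theorem~\ref{thm-technical}. The $100$ spare colours are what buy the extra edge, not what make the weak property suffice.
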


\subsection{Proof of Theorem~\ref{thm:BPW}} \label{sec_2.1proof}
The proof of Theorem~\ref{thm:BPW} now boils down to verifying the notions of proper-pseudorandomness within the relevant subgraphs involving random sets of vertices and colours. First, we restate Theorem~\ref{thm:BPW} for convenience.

\begin{theorem}\label{thm:BPWapp}Let $1/n\ll p\leq 1$, $\eps \llpoly\log^{-1}n$, and let $D$ be a properly coloured $(n,\eps)$-typical digraph. Let $X,Y$ be disjoint $p$-random subsets of $V(D)$, and let $C$ be a $p$-random subset of the colours $C(D)$, sampled independently. Then, with high probability, the following hold for any choice of disjoint $X',Y'\subseteq V(D)$ and $C'\subseteq C(D)$ satisfying $|Q\triangle Q'|\leq \eps n$ for each $Q\in \{X,Y,C\}$.

\begin{enumerate}[label = \textup{\textbf{RBS}}]
    \item\label{RBS-app}  If $|X'|=|Y'|\leq |C'|$, then $D[X',Y',C']$ has a rainbow matching of size $|X'|-1$.\renewcommand{\labelenumi}{\textup{\textbf{BPW}}}
    \item\label{BPW-app}  If $|X'|=|Y'|+1\leq |C'|$, then $D[X',Y',C']$ has a rainbow matching of size $|X'|-1=|Y'|$.\renewcommand{\labelenumi}{\textup{\textbf{SPC}}}
    \item\label{SPC-app}  If $|X'|=|Y'|\leq |C'|-100$, then $D[X',Y',C']$ has a rainbow matching of size $|X'|$.
\end{enumerate}
\end{theorem}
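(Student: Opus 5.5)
The plan is to reduce each of \ref{RBS-app}, \ref{BPW-app} and \ref{SPC-app} to one of Theorems~\ref{thm-technical}, \ref{thm-strong-technical} and \ref{thm-technical-variant}. The idea is to show that, with high probability, $D[X',Y',C']$ is $(n',p,\eps')$-properly-pseudorandom (and, where needed, strongly-properly-pseudorandom) for every admissible perturbation $(X',Y',C')$. Here $n'=|X'|$ and $\eps'\llpoly\log^{-1}n$ is slightly larger than $\eps$.

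The argument has three stages.

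\textbf{Step 1: the unperturbed sets.} First we verify the properties for the random triple $(X,Y,C)$ itself, with slightly better parameters.
\begin{itemize}
\item Property \ref{prop-pseud-basic-2new}: we use Chernoff's bound (Lemma~\ref{chernoff-bound}) and a union bound. In $D[X,Y,C]$ each vertex has about $p^2n$ edges with colour in $C$ to the opposite side, each colour has about $p^2 n$ edges, and the codegrees are about $p^3 n$. The codegree estimate uses that, in a near-complete properly coloured digraph, two vertices share $(1\pm O(\eps))n$ out-neighbours, and that the colours on these common edges are distinct enough. So $\mathcal{H}(D[X,Y,C])$ is $(pn,p^2,\eps/2)$-typical, with the normalisation chosen so that the class sizes are about $pn$.
\item Properties \ref{prop-pseud-abs-prime}--\ref{prop-pseud-add-new-2} and \ref{prop-alt}: each of these asks for linearly many disjoint bounded-size gadgets. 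For each one we first exhibit $\Omega(n)$ vertex- and colour-disjoint gadgets in the whole digraph $D$, using typicality and greedy choices in the spirit of Lemma~\ref{Connecting_lemma} and the constructions in \cite[Proposition 3.12]{montgomery2023proof}. Each gadget has $O(1)$ vertices and colours, so it survives in $(X,Y,C)$ with probability $p^{O(1)}$. Chernoff's bound then gives $\Omega(p^{O(1)}n)$ surviving gadgets with probability $1-e^{-\Omega(n)}$. This is exactly the role of the small choice $\alpha=p^{10^{10}}/10^{10^{10}}$.
\item Union bound: we take a union bound over the polynomially many choices of $c_0,d,e,f,u,v$, and over the $\bar C$ with $|\bar C|\le k$, which number at most $n^{100}$.
\end{itemize}

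\textbf{Step 2: robustness under perturbation.} Next we show that each property survives changing at most $\eps n$ elements of each set. Properties of gadget-counting type lose at most $O(\eps n)\ll\alpha n$ gadgets, since each added or removed element destroys at most one gadget in a disjoint family. Typicality degrades by $O(\eps n)$ additively, which is absorbed since $\eps\llpoly\eps'$. Adding elements could create adversarial vertices or colours. New vertices are handled because they are still typical in $D$: we note that Step 1 can be carried out for \emph{every} vertex and colour of $D$, not only those in $X\cup Y\cup C$, so each added vertex also has many gadgets. New colours are handled in the same way.

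\textbf{Step 3: matching sizes and choice of theorem.} The hypotheses of Theorems~\ref{thm-technical}--\ref{thm-technical-variant} require balanced classes and colour counts between $n'$ and $(1+\eps')n'$. The three cases go as follows.
\begin{itemize}
\item \ref{RBS-app}: we discard surplus colours of $C'$ so that $|C'|\le(1+\eps')|X'|$. Removing few colours keeps the properties, by Step 2. Then Theorem~\ref{thm-technical} applies.
\item \ref{BPW-app}: we have $|X'|=|Y'|+1$. We add a dummy vertex $y^*$ to $Y'$ joined to all of $X'$ by a single new colour? That would not be proper. Instead we delete a suitably chosen vertex $x\in X'$, which gives classes of size $|Y'|$ with at least $|Y'|+1$ colours, one more than needed. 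We then need a rainbow matching saturating all of $Y'$, i.e. one extra edge. For this we apply Theorem~\ref{thm-strong-technical} to $D[X',Y'\cup\{y^*\},C']$, where $y^*$ is an extra typical vertex of $D$ outside $X'\cup Y'$, which is allowed by the perturbation tolerance. This yields $|X'|-1$ edges, using only $|X'|-1\le|C'|$ colours. Whichever vertex is left unmatched, if $y^*$ is matched we swap its edge out and rematch using the absorber property \ref{prop-alt}. I expect this case to be the main obstacle, and I would first try to pick $y^*$ so that the strong version directly forces $y^*$ to be unmatched, by giving it colours only from a reserved set.
\item \ref{SPC-app}: this follows directly from Theorem~\ref{thm-technical-variant}, after trimming $C'$ to $(1+\eps')|X'|$ colours while keeping at least $|X'|+100$.
\end{itemize}
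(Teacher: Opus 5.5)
\textbf{Where the proposal matches the paper, and where it fails.} Your treatment of \ref{RBS-app} and \ref{SPC-app} is essentially the paper's. You show that, with high probability, every admissible perturbation $D[X',Y',C']$ with $|X'|=|Y'|\le|C'|$ is properly-pseudorandom, and then apply Theorems~\ref{thm-technical} and~\ref{thm-technical-variant} as black boxes. There is one small slip: with $n'\approx pn$ the density parameter is $p$, not $p^2$, and the error term must be taken larger than $\eps$, not $\eps/2$.

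The genuine gap is \ref{BPW-app}. Applying Theorem~\ref{thm-strong-technical} (or Theorem~\ref{thm-technical}) to $D[X',Y'\cup\{y^*\},C']$ gives a rainbow matching with $|X'|-1$ edges. However, you have no control over which vertex of $Y'\cup\{y^*\}$ is left uncovered. If it is some $y\in Y'$, you are one edge short.

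Neither proposed repair works.
\begin{itemize}
\item \ref{prop-alt} is a hypothesis consumed inside the proof in \cite{montgomery2023proof}. It is not a tool for post-processing the output of a black-box theorem; using it that way means reopening that proof.
\item You cannot give $y^*$ ``colours only from a reserved set'', because its edge colours are fixed by $D$. If they all avoid $C'$, then $y^*$ is isolated, pseudorandomness fails, and no theorem applies.
\item Deleting a ``suitably chosen'' $x\in X'$ only moves the problem. You would then need a perfect rainbow matching with a single spare colour, whereas Theorem~\ref{thm-technical-variant} needs $100$. Choosing the right $x$ is the whole difficulty.
\end{itemize}

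\textbf{The missing idea: swap colours with one vertex class.} Consider the coloured bipartite graph $D[X',C',Y']$.
\begin{itemize}
\item Its vertex classes are $X'$ and $C'$, with $C'$ trimmed so that $|C'|=|X'|$.
\item $xc$ is an edge of colour $y$ whenever $xy\in E(D)$ has colour $c$.
\end{itemize}
This graph is properly coloured because $D$ is. A rainbow matching in it is exactly a rainbow matching in $D[X',Y',C']$ with the same edges: distinct $x$, distinct $y$, distinct $c$.

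The flipped graph is balanced but has only $|Y'|=|X'|-1$ colours. So any rainbow matching with $|X'|-1$ edges must use every colour, which means it saturates $Y'$. This is exactly what Theorem~\ref{thm-strong-technical} provides. That is why that theorem weakens the colour count to $n-1$ (\ref{prop-alt-basic}), at the cost of the stronger absorber \ref{prop-alt}.

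What you actually need to prove is strong-proper-pseudorandomness of the \emph{flipped} graph $D[X',C',Y']$, for all perturbations with $|X'|=|C'|\le |Y'|+1$. This is a separate verification from the unflipped case. Every gadget in the flipped graph corresponds to a different configuration in $D$, since vertices of $Y'$ now play the role of colours. In the paper this verification is the bulk of the work for this case.
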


Below is the lemma that records all of the proper-pseudorandomness inheritance that we require. Given an edge-coloured digraph $D$, and sets $X,Y\subseteq V(D)$ and $C\subseteq C(D)$, $D[X,C,Y]$ is the undirected coloured bipartite graph consisting of vertex classes $X,C$, and an edge $xc$ with $x\in X$ and $c\in C$ of colour $y\in Y$ whenever $xy$ is an edge of colour $c$ in $D$. The graph $D[X,Y,C]$ is defined in Section~\ref{sec:notation}.

\begin{lemma}\label{lem:pseud is inherited appendix} Let $1/n\ll p\leq 1$, and let $\eps \llpoly \tilde \eps\llpoly\log^{-1}n$, and let $D$ be a properly coloured $(n,\eps)$-typical digraph. Let $X,Y$ be disjoint $p$-random subsets of $V(D)$, and let $C$ be a $p$-random subset of the colours $C(D)$, sampled independently. Then, with high probability, the following holds.
\begin{enumerate}[label={\textup{\textbf{PrP}}}]
    \item\label{PrP}  For any disjoint $X',Y'\subseteq V(D)$ and $C'\subseteq C(D)$ where, for each $Q\in \{X,Y,C\}$, $|Q\triangle Q'|\leq \eps n$,
    \begin{enumerate}[label={\textup{\textbf{\arabic{enumii}}}},leftmargin=0pt]
    \item if $|X'|=|C'|\leq |Y'|+1,$ then $D[X',C',Y']$ is $(|X'|,p,\tilde \eps)$-strongly-properly-pseudorandom, and \label{PrP-1}
    \item if $|X'|=|Y'|\leq |C'|,$ then  $D[X',Y', C']$ is $(|X'|,p,\tilde \eps)$-properly-pseudorandom.\label{PrP-2}
    \end{enumerate}
\end{enumerate}
\end{lemma}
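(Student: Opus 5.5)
We verify each property \ref{prop-pseud-basic-1}--\ref{prop-pseud-add-new-2} (and \ref{prop-alt-basic}, \ref{prop-alt} for part~\ref{PrP-1}) in two stages. First we show that, with high probability, the \emph{random} triples $D[X,Y,C]$ and $D[X,C,Y]$ satisfy robust versions of these properties: the counts $\alpha n$, $\alpha^2 n$, $\alpha n^2$ are replaced by $2\alpha n$, $2\alpha^2n$, $2\alpha n^2$, and typicality holds with parameter $\tilde\eps/2$. Second, we show deterministically that these robust properties survive any perturbation $X',Y',C'$ with $|Q\triangle Q'|\le \eps n$. The perturbation costs only $O(\eps n)$ in each count, because each gadget has bounded size and distinct gadgets are vertex- and colour-disjoint. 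Degrees and codegrees change by at most $3\eps n\ll \tilde\eps p^2 n$, and $\eps\llpoly\tilde\eps$ absorbs this. The size conditions $|X'|=|C'|\le|Y'|+1$ and $|X'|=|Y'|\le|C'|$ are exactly what make \ref{prop-pseud-basic-1} or \ref{prop-alt-basic} hold. The upper bound $|C(G)|\le(1+\tilde\eps)n$ follows because $|C'|$ is $(1\pm 2\eps)pn$-close to $|X'|$ by Chernoff. We also check that every colour of $C'$ actually appears in the restricted graph, which follows from typicality.

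For the first stage, typicality \ref{prop-pseud-basic-2new} of $\mathcal{H}(D[X,Y,C])$ comes from Chernoff's bound (Lemma~\ref{chernoff-bound}) and a union bound. Each degree or codegree in the three projections is a sum of independent indicators with mean of order $p n$ or $p^2 n$, so it concentrates within a $(1\pm n^{-1/3})$ factor. The input is the $(n,\eps)$-typicality of $D$: in-degrees, out-degrees and colour-class sizes are all $(1\pm\eps)n$. For codegrees in the complete-like setting, the common neighbourhood of two vertices within $Y$ (via colours in $C$) has size $(1\pm O(\eps))p^2 n$, since all but $O(\eps n)$ pairs are edges and properness makes colour coincidences negligible. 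The graph $D[X,C,Y]$, in which colours and vertices swap roles, is handled symmetrically. Here properness in the digraph sense (distinct in-colours and distinct out-colours at each vertex) is what makes this auxiliary graph properly coloured.

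For the gadget properties \ref{prop-pseud-abs-prime}--\ref{prop-pseud-add-new-2} and \ref{prop-alt}, we first find in $D$ itself linearly many (respectively quadratically many, for \ref{prop-pseud-abs-prime}) disjoint gadgets of each bounded-size type, by greedy or deterministic arguments in the spirit of Lemma~\ref{Connecting_lemma} and \cite[Proposition~3.12]{montgomery2023proof}. Near-completeness with $(1\pm\eps)n$-sized colour classes leaves plenty of room: any bounded set of constraints forbids only $O(1)$ choices per step. Each fixed gadget survives the random sampling, meaning all of its $O(1)$ vertices land in the correct side and its colours land in $C$, with probability at least $p^{O(k)}$ for $k\le 100$. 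Survival events of disjoint gadgets are independent, so Chernoff gives at least $p^{O(1)}n\ge 2\alpha n$ survivors with failure probability $e^{-\Omega(n)}$. This absorbs a union bound over the $n^{O(1)}$ choices of $c_0$, $d$, $u$, $v$, $e$, $f$, and over the sets $\bar C$ of size at most $100$; the latter number only $n^{100}$, which is polynomial. This is why $\alpha=p^{10^{10}}/10^{10^{10}}$ is chosen so small. The exceptional vertices $u_0,v_0$ in \ref{prop-pseud-add-2} are taken to be the endpoints of the colour-$c_0$ edges meeting $u,v$ in awkward positions; this is exactly why the weakened form of \ref{prop-pseud-add-2} is stated.

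The main obstacle is \ref{prop-pseud-add-new-2}, and especially \ref{prop-alt}. Its structure involves a fixed family $(V_i,C_i)_{i\in[r]}$ chosen before $\bar C$, with a requirement that holds for $\alpha^2 n$ indices simultaneously for \emph{every} $\bar C$. We plan to choose the family inside $D$ first, as a large disjoint family of $c_0$-perfect-matching/rainbow-matching pairs. We then show that for each $\bar C$ and for a constant fraction of the indices $i$, there are many disjoint switching structures $\bar V_j$ in $D$. We then argue that these survive sampling, with the $\alpha^2$ versus $\alpha$ gap absorbing both the sampling losses and the $\eps n$ perturbation. We follow the corresponding verification in \cite[Section~10]{montgomery2023proof}, adapted to our sampling model. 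There, the identity colour $c_0$ must lie in $C'$, and the fixed family must be chosen after the samples are known but before the perturbation.
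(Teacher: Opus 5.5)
Your two-stage scheme matches the paper's skeleton: find many gadgets in $D$, show they survive sampling, then absorb the $\eps n$ perturbation. The gaps are in the properties where the verification needs an actual idea, and in two of them the route you commit to fails.

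\textbf{Exceptional vertices in \ref{prop-pseud-add-2}.} The paper proves this property for $D[X',Y',C']$ for \emph{all} $u,v$. Exceptions are needed only in the flipped graph $D[X',C',Y']$ (property \ref{prop-pseud-add-2-flip}), where the identity colour is a vertex $y_0$ of $D$. There, starting the switching path at $x$ means choosing a colour $c_1$. This forces $y_1$ to be the colour-$c_1$ out-neighbour of $x$ and $x_1$ to be the colour-$c_1$ in-neighbour of $y_0$. These coincide, which is fatal since $X'\cap Y'=\emptyset$, exactly when $c(xu)=c(uy_0)$ for $u=y_1$. If this holds for most out-neighbours $u$ of $x$, then $\Omega(n)$ choices die at once, so your heuristic that bounded constraints forbid only $O(1)$ choices per step breaks down. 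The missing observation is that, by properness, at most one vertex $x_0$ has this for at least $2n/3$ of its neighbours. This $x_0$ depends only on $y_0$ and is the exception. Your description of $u_0,v_0$ as endpoints of colour-$c_0$ edges meeting $u,v$ makes them depend on $u,v$, which the quantifier order forbids.

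\textbf{Concentration in \ref{prop-pseud-abs-prime}.} This property asks for $\alpha n^2$ \emph{overlapping} pairs of 4-cycles, so independence of disjoint gadgets plus Chernoff does not apply. The paper uses Azuma/McDiarmid with an $O(n)$ Lipschitz bound. It handles the perturbation by noting each vertex or colour lies in $O(n)$ counted structures, so the loss is $O(\eps n^2)$; disjointness plays no role.

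\textbf{\ref{prop-pseud-add-new-2} versus \ref{prop-alt}.} This is the more serious gap: you treat the two as one task and defer both to \cite[Section~10]{montgomery2023proof}.
\begin{itemize}
\item \ref{prop-alt} is needed for the flipped graph, which in the \ref{BPW} case has only $|X'|-1$ colours. It requires the rainbow matching to use \emph{every} colour of $\bar Y\cup Y_i$.
\item The random-family argument you invoke is used in the paper only for \ref{prop-pseud-add-new-2} in $D[X',Y',C']$. It builds each $C_i$ along a random alternating cycle whose last colour is forced when the cycle closes. So it controls only $c_1,\dots,c_{k-1}$, and its good walks $P_v$ leave one colour unused. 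That yields $k+|\bar C|-1$ edges, i.e. only the weak property.
\item For the flipped graph the paper's key lemma is \ref{prop-pseud-add-new-1-flip-strong}: for \emph{every} $\bar Y$ of size exactly $k\le 200$ there are $\alpha n$ disjoint structures whose rainbow matching uses all of $\bar Y$. It is proved by observing that $x_1$ determines the whole tree at $y_0$, then counting $(1-7k^2\sqrt\alpha)n$ valid $x_1$ by induction on $k$.
\item From this lemma, \ref{prop-pseud-add-3-flip} (using \ref{prop-pseud-add-2-flip}) and then \ref{prop-alt-flip} follow by maximality arguments inside the perturbed graph. Any maximal family works, because at least $\alpha^2n$ indices have $Y_i\cap\bar Y=\emptyset$, and the lemma is applied to $\bar Y\cup Y_i$.
\end{itemize}

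\textbf{Non-flipped \ref{prop-pseud-add-new-2}.} Here the step you name but do not supply is the crux. The family must be chosen \emph{randomly} in $D$; a family fixed ``inside $D$ first'' has no reason to be good against every $\bar C$. The paper then needs three further steps:
\begin{itemize}
\item the counting claim $|\mathcal L_{\bar C}|\ge 99n^{k-1}/100$;
\item Azuma's inequality plus a union bound over all $\bar C$;
\item a distance argument to extract $\alpha n$ disjoint good walks.
\end{itemize}
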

We now prove Theorem~\ref{thm:BPWapp} subject only to the proof of Lemma~\ref{lem:pseud is inherited appendix}, which is in Section~\ref{sec_lemmaA.7}.
\begin{proof}[Proof of Theorem~\ref{thm:BPWapp}] Let $\tilde{\eps}$ satisfy $\eps\llpoly \tilde{\eps}\llpoly \log^{-1}n$. By Lemma~\ref{lem:pseud is inherited appendix}, with high probability, $X,Y,C$ satisfy \ref{PrP}.
For any such $X,Y,C$ satisfying \ref{PrP}, we now show that the conclusion of Theorem~\ref{thm:BPWapp} holds. Firstly, using \ref{PrP}\textbf{.}\ref{PrP-2}, we have that \ref{RBS-app} and \ref{SPC-app} are direct consequences of Theorem~\ref{thm-technical} and Theorem~\ref{thm-technical-variant}, respectively. Then, using the simple observation that $D[X',Y',C']$ has a rainbow matching of size $|Y'|$ if and only if $D[X',C',Y']$ has a rainbow matching of size $|Y'|$, \ref{BPW-app} follows from \ref{PrP}\textbf{.}\ref{PrP-1} together with Theorem~\ref{thm-strong-technical} applied to $D[X',C',Y']$.
\end{proof}

The remaining task is to prove Lemma~\ref{lem:pseud is inherited appendix}.


\smallskip

\subsection{Proof of Lemma~\ref{lem:pseud is inherited appendix}}\label{sec_lemmaA.7}
We will now prove Lemma~\ref{lem:pseud is inherited appendix}. For this, let $1/n\ll p\leq 1$, and let $\eps \llpoly \tilde \eps\llpoly\log^{-1}n$, and let $D$ be a properly coloured $(n,\eps)$-typical digraph. Let $X,Y$ be disjoint $p$-random subsets of $V(D)$, and let $C$ be a $p$-random subset of the colours $C(D)$, sampled independently.
Our aim is to prove that \ref{PrP} holds with high probability.
Where  $X',Y',C'$ are as in \ref{PrP}, this concerns two different graphs, $D[X',C',Y']$ and $D[X',Y',C']$. The second of these is the most natural interpretation of the relevant coloured graph, matching the use in, e.g., \cite{KPSY,montgomery2023proof}. The first hypergraph, $D[X',C',Y']$, however, flips the role of the colours and the second vertex set. For convenience then, we will restate the properties we need for $D[X',C',Y']$ to be $(|X'|,p,\tilde \eps)$-strongly-properly-pseudorandom in the following more natural form. Note that it would suffice to replace `$n$' with `$|X'|$' in \ref{prop-pseud-abs-prime-flip} to \ref{prop-alt-flip}. However, since with high probability we have that $|X'| \sim pn$, and we have that $\alpha\leq p^{1000}/10^{1000}$, we can prove the stated claims (which are technically stronger by a factor of $p$), reducing slightly the notation.

\stepcounter{propcounter}
\begin{enumerate}[label={\textbf{\Alph{propcounter}\astar*}}]
\item $|X'|=|C'|$ and $|X'|-1\leq |Y'| \leq (1+\tilde \eps)|X'|$. \label{prop-pseud-basic-1-flip}

\item $\mathcal{H}(D[X',C',Y'])$ is $(|X'|,p,\tilde \eps)$-typical (recall Section~\ref{sec:typical}).\label{prop-pseud-basic-2new-flip}

\item For each $y\in Y'$ and $e\in E_y(D[X', C', Y'])$, for all but at most $\sqrt{n}$ edges $f\in E_y(D[X', C', Y'])\setminus \{e\}$, there are at least $\alpha n^2$ pairs $(S_1,S_2)$ such that $S_1$ and $S_2$ are vertex-disjoint rainbow 4-cycles with $e\in E(S_1)$ and $f\in E(S_2)$, and
the colour sets of the neighbouring edges of $e$ in $S_1$ and the neighbouring edges of $f$ in $S_2$ are the same.\label{prop-pseud-abs-prime-flip}
\item For any $y_0 \in Y'$ there exist $x_0\in X'$ and $c_0\in C'$ such that the following holds for all $x\in X'\setminus\{x_0\}$ and $c \in C'\setminus\{c_0\}$. There are disjoint sets ${X}_1,\ldots,{X}_{\alpha n}\subset X'\setminus \{x\}$, $C_1,\ldots,C_{\alpha n}$ in $C'\setminus\{c\}$ and $Y_1,\ldots,Y_{\alpha n}$ in $Y'\setminus\{y_0\}$ such that,
for each $i\in [\alpha n]$, $|{X}_i|=|{C_i}|=2$, $|Y_i|=3$, $D[X_i, C_i, \{y_0\}]$ contains 2 colour-$y_0$ edges and $D[X_i\cup \{x\}, C_i \cup \{c\}, Y_i]$ contains an exactly-$Y_i$-rainbow matching in $E(D[X', C', Y'])\setminus \{xc\}$.\label{prop-pseud-add-2-flip}
\item For each distinct $y_0,y'\in Y'$, there are disjoint sets $X_1,\ldots,X_{\alpha n/12}$ in $X'$, $C_1,\ldots,C_{\alpha n/12}$ in $C'$, and $Y_1,\ldots,Y_{\alpha n/12}$ in $Y'\setminus\{y_0,y'\}$
so that, for each $i\in [\alpha n/12]$, $|X_i|=|C_i|=4$ and $|Y_i|=3$, and $D[X_i, C_i, Y_i \cup \{y_0, y'\}]$ contains a matching of $4$ colour-$y_0$ edges
and an exactly-$(Y_i\cup \{y'\})$-rainbow matching.\label{prop-pseud-add-3-flip}

\item
For any $y_0\in Y'$, $0\leq k\leq 20$, and any $\bar{Y}\subset Y'\setminus \{y_0\}$ with $|\bar{Y}| \geq 5k$, there are disjoint sets $\bar{X}_1,\ldots,\bar{X}_{\alpha n}\subset X'$ and $\bar{C}_1,\ldots,\bar{C}_{\alpha n}\subset C'$
such that,
for each $i\in [\alpha n]$, $|\bar{X}_i|=|\bar{C}_i|=k+1$ and $D[\bar{X}_i, \bar{C}_i, \bar{Y} \cup \{y_0\}]$ contains both a matching of $k+1$ colour-$y_0$ edges and a $\bar{Y}$-rainbow matching with $k$ edges. \label{prop-pseud-add-new-1-flip}

\item Setting $k=100$, for each $y_0\in Y'$, there is some $r\in \N$ and disjoint sets ${X}_1,\ldots,{X}_{r}$ in $X'$, $C_1,\ldots,C_{r}$ in $C'$ and $Y_1,\ldots,Y_{r}$ in $Y'\setminus\{y_0\}$
 with $|X_i|=|C_i|=|Y_i|=k$ for each $i\in [r]$ such that $D[X_i, C_i, Y_i \cup \{y_0\}]$ contains an exactly-$Y_i$-rainbow matching and a perfect matching of colour-$y_0$ edges, and the following holds. For every $\bar{Y}\subset Y'\setminus \{y_0\}$ with $|\bar{Y}|\leq k$, for at least $\alpha^2n$ values of $i\in [r]$, there are vertex-disjoint sets $\bar{X}_1,\ldots,\bar{X}_{\alpha n}\subset X'$ and $\bar{C}_1,\ldots,\bar{C}_{\alpha n}\subset C'$ such that,
for each $j\in [\alpha n]$, $|\bar{X}_j|=|\bar{C}_j|=k+|\bar{Y}|+1$ and $D[\bar{X}_j, \bar{C}_j, \bar{Y} \cup Y_i \cup \{y_0\}]$ contains both a matching of $k+|\bar{Y}|+1$ colour-$y_0$ edges and a $(\bar{Y}\cup Y_i)$-rainbow matching with $k+|\bar{Y}|$ edges. \label{prop-alt-flip}
\end{enumerate}

We will separate the proof that \ref{PrP} holds with high probability into showing that 14 different statements hold with high probability corresponding to the 7 properties for proper-pseudorandomness and the 7 properties for strong-proper-pseudorandomness. For example, we will use \textbf{\ref{PrP}.\ref{PrP-1}.\ref{prop-pseud-basic-1}} to refer to the statement that ``For any disjoint $X',Y'\subseteq V(D)$ and $C'\subseteq C(D)$ where, for each $Q\in \{X,Y,C\}$, $|Q\triangle Q'|\leq \eps n$, if $|X'|=|C'|\leq |Y'|+1,$ then \ref{prop-pseud-basic-1-flip} holds for $D[X',C',Y']$ to be $(|X'|,p,\tilde \eps)$-strongly-properly-pseudorandom.''
We will prove that \textbf{\ref{PrP}.\ref{PrP-1}} holds with high probability in Section~\ref{sec_flip} by showing that each of the relevant statements corresponding to \ref{prop-pseud-basic-1-flip}, \ref{prop-pseud-basic-2new-flip}--\ref{prop-pseud-add-new-1-flip}, and \ref{prop-alt-flip} holds with high probability. Similarly, then,
we will prove that \textbf{\ref{PrP}.\ref{PrP-2}} holds with high probability in Section~\ref{sec_nonflip} by showing that each of the relevant statements corresponding to \ref{prop-pseud-basic-1}--\ref{prop-pseud-add-new-2} holds with high probability.

\par We begin by briefly discussing \ref{prop-pseud-basic-1}, \ref{prop-pseud-basic-2new}, \ref{prop-pseud-basic-1-flip} and \ref{prop-pseud-basic-2new-flip}, which are the most straightforward and standard properties to verify. First, note that, with high probability, $X$, $Y$ and $C$ are such that, for any disjoint $X',Y'\subseteq V(D)$ and $C'\subseteq C(D)$ where, for each $Q\in \{X,Y,C\}$, $|Q\triangle Q'|\leq \eps n$, if $|X'|=|C'|\leq |Y'|+1,$ then \ref{prop-pseud-basic-1-flip} holds, i.e., that $|X'|=|C'|$ and $|X'|-1\leq |Y'|\leq (1+\tilde{\eps})|X'|$.
In this inequality, the first part follows directly by the assumption on $X'$ and $C'$. As $\eps\llpoly\tilde{\eps}$, the second part is true if $|Q|\leq (1+\tilde{\eps}/2)|Q'|$ for each $Q,Q'\in \{X,Y,C\}$, which holds with high probability by a simple application of Lemma~\ref{chernoff-bound}. \ref{prop-pseud-basic-1} follows similarly.

\par Following Keevash, Pokrovskiy, Sudakov, and Yepremyan~\cite[Section~6]{KPSY}, it is straightforward to see that, with high probability, we have $|X|,|Y|,|C|=(p\pm \eps )n$ and, $\mathcal{H}(D[X',Y',C'])$ and $\mathcal{H}(D[X',C',Y'])$ are $(pn,p,\tilde \eps)$-typical for any choice of $X',Y',C'$ that is an $\eps$-small perturbation of $X,Y,C$. Thus, with high probability, $X,Y,C$ are so that \ref{prop-pseud-basic-2new} and \ref{prop-pseud-basic-2new-flip} hold for the hypergraphs associated to $D[X',Y',C']$ and $D[X',C',Y']$ for any choice of $X',Y',C'$.

\par Thus, in the remainder of the section, we focus only on the remaining five properties for each of the two models.

\par For both models, it is useful to make the following observations.  Since $D$ is $(n, \eps)$-typical, we have that $|C(D)|=(1 \pm 3\eps)n$. In particular, for every pair of vertices $u,v \in V(D)$ the number of colours that both $u$ and $v$ see as in- or out-edges (or $u$ as in/out and $v$ as out/in), is at least $(1-5\eps)n$. For every pair of colours $c_1, c_2 \in C(D)$ the number of vertices that have in- or out-edges in both colours $c_1$ and $c_2$ is at least $(1-3\eps)n$. We also have that the number of non-edges in $D$ is at most $3\eps n^2$.

\smallskip

\subsubsection{Proof that \ref{PrP}\textbf{.}\ref{PrP-1} holds with high probability}
\label{sec_flip}


\smallskip

 \noindent\textbf{\ref{PrP}.\ref{PrP-1}.\ref{prop-pseud-abs-prime-flip}:}
 We will prove something stronger that does not require having $\sqrt{n}$ exceptional edges $f$. Fix $y \in Y'$ and disjoint edges $e=x_1c_1$ and $f=x_2c_2$ in $E_y(D[X', C', Y'])$. Note that the existence of edges $e$ and $f$ in $D[X', C', Y']$ corresponds to the existence of edges $x_1y$ and $x_2y$ in $D$ such that $c_D(x_1y)=c_1$ and $c_D(x_2y)=c_2$. Working with this correspondence, to find the required structures in $D[X', C', Y']$, it suffices to find, in $D[X', Y', C']$, a collection of $\alpha n^2$ structures consisting of a rainbow $4$-cycle $x_1yx_2y''$ with edges of distinct colours $c_1,c_2,c_2',c_1'$ respectively, together with a path of length $4$ on vertices $y_1x_1'y'x_2'y_2$ with edges in distinct colours $c_1', c_1, c_2, c_2'$ respectively, where it is also permissible if $y_1=y_2$, yielding a $4$-cycle.  That is, it suffices to show that there are at least $\alpha n^2$ tuples of elements $(y', y'', y_1, y_2, x_1', c_1', x_2', c_2')$ such that $y', y'', y_1, y_2 \in Y'$, $x_1', x_2' \in X'$ and $c_1', c_2' \in C'$ such that all elements are distinct, and distinct from $x_1, x_2, c_1, c_2, y$, with the exception that $y_1=y_2$ is not forbidden, and the following all hold:
 \begin{itemize}
     \item $x_1'y', x_2'y' \in D$ with $c_D(x_1'y')=c_1$ and $c_D(x_2'y')=c_2$.
     \item $x_1y'', x_2y'' \in D$ with $c_D(x_1y'')=c_1'$ and $c_D(x_2y'')=c_2'$.
     \item $x_1'y_1, x_2'y_2 \in D$ with $c_D(x_1'y_1)=c_1'$ and $c_D(x_2'y_2)=c_2'$.
 \end{itemize}
We start by counting the number of these tuples which appear in $D$ (that is, in place of $(X', Y', C')$, we take $(V(D), V(D), C(D))$), and show, with high probability, that sufficiently many of these do in fact appear in $(X', Y', C')$. Note that the `$x$' vertices and `$y$' vertices are all distinct, thus when sampling so that $X$ and $Y$ are disjoint, each of these is equally likely to appear. Now, in $D$, there are at least $(1-3\eps)n-3$ vertices $y' \notin \{y, x_1, x_2\}$ with neighbours in both colours $c_1$ and $c_2$, yielding distinct vertices $x_1'$ and $x_2'$ respectively. Moreover, having fixed these, let $\bar{c}_1:=c(x_1'x_2')$ and $\bar{c}_2:=c(x_2'x_1')$. There are at least $(1-3\eps)n-20$ vertices $y'' \notin\{x_1, x_2, x_1', x_2', y, y'\}$ such that $x_1y''$ and $x_2y''$ are edges in $D$ with distinct colours $c_1', c_2' \notin \{c_1, c_2, \bar{c}_1, \bar{c}_2\}$. Given such a pair of colours $(c_1', c_2')$ note that there is no choice of $y''$ yielding the pair of colours $(c_1', d)$, or $(d, c_2')$ for any other colour $d$. Now, say such a pair $(c_1', c_2')$ is {\it bad} if for some $z \in \{x_1, x_2, y\}$ we have that $c(x_1'z)=c_1'$ or $c(x_2'z)=c_2'$, or $x_1'$ has no out-edge in colour $c_1'$, or $x_2'$ has no out-edge in colour $c_2'$. Since $|C(D)| \leq (1+3\eps)n$ as $D$ is $(n, \eps)$-typical, every vertex has an in-edge in all but at most $4\eps n$ colours in $D$, and so there are at most $8\eps n+6$ bad colour pairs. These correspond to at most $8\eps n + 6$ bad choices of $y''$, leaving at least $(1-11\eps)n-26$ good choices. That is, in $D$, there are at least $(1-15\eps)n^2$ choices for the desired tuples $(y', y'', y_1, y_2, x_1', c_1', x_2', c_2')$.

Thus, the expected number of tuples of the desired type appearing in $D[X,C,Y]$ is at least $(1-15\eps)p^8n^2$. Note that any vertex or colour (not including the fixed colour $y$ and fixed vertices $x_1, x_2, c_1, c_2$) appears in at most $O(n)$ of these tuples. Thus, by Azuma's inequality, with probability $1-\exp(-\Omega(\alpha n))$, there are at least $(1-16\eps)p^8n^2$  pairs of disjoint rainbow $4$-cycles of the desired type in $D[X,C,Y]$.

Moreover, again using that any vertex or colour (not including the fixed colour $y$ and fixed vertices $x_1, x_2, c_1, c_2$) appears in at most $O(n)$ of the desired tuples, when moving from $X, C$ and $Y$ to $X', C'$ and $Y'$ respectively, we have that at most $O(\eps n^2)$ of these have a colour in $Y\setminus Y'$, at most $O(\eps n^2)$ of these have a vertex in $X\setminus X'$ and at most $O(\eps n^2)$ of these use a vertex in $C \setminus C'$. Thus, since $\eps \ll p$, with high probability, we have at least $p^8n^2/2 \geq \alpha n^2$ such cycles for each $y \in Y'$ and pair $e,f \in D[X',C',\{y\}]$. Taking a union bound over all $y \in Y'$ and $(x_1, c_1), (x_2, c_2) \in X' \times C'$ gives the desired result.

 \smallskip

 \noindent\textbf{\ref{PrP}.\ref{PrP-1}.\ref{prop-pseud-add-2-flip}}: Note that we actually prove something slightly stronger, where $c$ can be any element from $C'$. In particular, given $y_0 \in Y'$, we only need to define $x_0 \in X'$ as, in the setting of $D[X', C', Y']$, we do not need to avoid any $c_0 \in C'$. Let $y_0 \in V(D)$. We start by observing that there exist at most one vertex $x_0 \in V(D)\setminus \{y_0\}$ with $t \geq 2n/3$ neighbours $u_1, \ldots, u_t$ such that $c(u_iy_0)=c(x_0u_i)$ for every $i \in [t]$. Indeed this follows by noting that if there are at least two such vertices, this contradicts that $D$ is properly coloured.

 Now, given $y_0 \in Y'$, fix $x \in X'\setminus \{x_0\}$, and $c \in C'$, noting that $x$ and $y_0$ are distinct. To find the required structures in $D[X', C', Y']$, it suffices to find $\alpha n$ disjoint tuples of distinct elements $(x_1, x_2, c_1, c_2, y_1, y_2, y_3) \in (X')^2 \times (C')^2 \times (Y')^3$ such that, in $D[X',Y',C']$, $y_2x_1y_0x_2y_3$ is a path with edges in colours $c_2, c_1, c_2, c$ respectively, and $xy_1$ is an edge in colour $c_1$.

 To do this, we first show that, given any $\bar{X} \subseteq X'\setminus\{x\}, \bar{Y} \subseteq Y'\setminus\{y_0\}$ and $\bar{C} \subseteq C'\setminus \{c\}$ with $|\bar{X}|, |\bar{C}|, |\bar{Y}| \leq 3\sqrt{\alpha} n$, we can find the desired structure with distinct elements $(x_1, x_2, c_1, c_2, y_1, y_2, y_3) \in (V(D)\setminus (\bar{X} \cup \{x\}))^2 \times (C(D)\setminus (\bar{C} \cup \{c\}))^2 \times (V(D) \setminus (\bar{Y} \cup \{y_0\}))^3$. This ensures that there are at least $\sqrt{\alpha} n$ disjoint collections of the desired type in $D$.

We find the desired structure constructively. We start by finding a suitable $c_1 \in C\setminus(\bar{C} \cup \{c\})$ which fixes $x_1 \neq y_1$ with $x_1, y_1 \in V(D) \setminus (\bar{X} \cup \bar{Y} \cup \{y_0, x\})$ such that $c(xy_1)=c(x_1y_0)=c_1$. Note that by our initial observation, in $D$ there are at least $n/3-O(\sqrt{\alpha} n)$ such choices for $c_1$. Of these, also by our initial observation, at most one such choice fixes $x_1$ in such a way that, when trying to choose $c_2, x_2, y_2$ we have more than $2n/3$ choices forcing $x_2=y_2$. Moreover, there is at most one other choice of $c_1$ fixing $x_1$ so that for at least $2n/3$ choices of $c_2$ we find that the resulting $x_2, y_2$ satisfy $c(x_2y_2)=c$. In all other cases we are left with at least $n/3-O(\sqrt{\alpha} n)$ choices of a colour $c_2 \in C\setminus(\bar{C} \cup \{c, c_1\})$ fixing vertices $x_2 \neq y_2$ with $x_2, y_2 \in V(D) \setminus (\bar{X} \cup \bar{Y} \cup \{y_0, y_1, x, x_1\})$ such that $c(x_1y_2)=c(x_2y_0)=c_2$ and $c(x_2y_2) \neq c$. That is, we have at least $n/4$ choices for $c_1$ which result in at least $n/4$ choices for $c_2\in C\setminus(\bar{C} \cup \{c, c_1\})$ fixing vertices $x_2 \neq y_2$ with $x_2, y_2 \in V(D) \setminus (\bar{X} \cup \bar{Y} \cup \{y_0, y_1, x, x_1\})$ such that $c(x_1y_2)=c(x_2y_0)=c_2$ and $c(x_2y_2) \neq c$. Fixing one such choice of $c_1$, each such subsequent choice of $c_2$ fixes a distinct vertex $x_2 \in V(D) \setminus (\bar{X} \cup \bar{Y} \cup \{y_0, y_1, x, x_1\})$ (also with $x_2 \neq y_2$ and $c(x_2y_2) \neq c$). At most $O(\sqrt{\alpha}n)$ of these choices result in a vertex $x_2$ which either has no out-neighbour in colour $c$, or has an out-neighbour in colour $c$ which lies in $\bar{X} \cup \bar{Y} \cup \{x, x_1, y_0, y_1\}$, leaving many choices for $c_2$ which result in an edge $x_2y_3$ such that $c(x_2y_3)=c$ and $y_3 \in V(D)\setminus (\bar{X} \cup \bar{Y} \cup \{y_0, y_1, y_2, x, x_1, x_2\})$. In particular, this yields the desired structure.

 Now, given that we have at least $\sqrt{\alpha}n$ such disjoint tuples in $D$, we see that the expected number found satisfying $x_1, x_2 \in X$, $y_1, y_2, y_3 \in Y$ and $c_1, c_2 \in C$ is at least $p^7\sqrt{\alpha}n$. By Chernoff's bound, it follows that,  with probability at least $1-\exp(-\Omega(\sqrt{n}))$, at least $(1-\eps)p^7\sqrt{\alpha}n \geq 2\alpha n$ appear in $D[X,Y,C]$. Moreover, since each of these structures is disjoint, at most $3\eps n \leq \alpha n$ can be removed by the perturbations of $X, Y$ and $C$ to obtain $X', Y'$ and $C'$. Recall that these $\alpha n$ structures found in $D[X', Y', C']$ correspond precisely to $\alpha n$ disjoint structures of the required type found in $D[X', C', Y']$. Thus taking a union bound over all $y_0 \in Y'$, $x \in X'\setminus \{x_0\}$ and $c \in C'$ yields the result.

 \smallskip

  \noindent\textbf{\ref{PrP}.\ref{PrP-1}.\ref{prop-pseud-add-new-1-flip}}: We start with the following stronger version of \ref{prop-pseud-add-new-1-flip}.
\begin{enumerate}[label = \textbf{\Alph{propcounter}6'}]
\item For any $y_0\in Y'$, $0\leq k\leq 200$, and any $\bar{Y}\subset Y'\setminus \{y_0\}$ with $|\bar{Y}|=k$, there are disjoint sets $\bar{X}_1,\ldots,\bar{X}_{\alpha n}\subset X'$ and $\bar{C}_1,\ldots,\bar{C}_{\alpha n}\subset C'$ such that,
for each $i\in [\alpha n]$, $|\bar{X}_i|=|\bar{C}_i|=k+1$ and $D[\bar{X}_i, \bar{C}_i, \bar{Y} \cup \{y_0\}]$ contains both a matching of $k+1$ colour-$y_0$ edges and a $\bar{Y}$-rainbow matching with $k$ edges.\label{prop-pseud-add-new-1-flip-strong}
\end{enumerate}

 Fix $y_0 \in Y'$ and $\bar{Y}=\{y_1, \ldots, y_k\} \subseteq Y' \setminus \{y_0\}$. Translating from $D[X', C', Y']$ to $D[X', Y', C']$, to prove \ref{prop-pseud-add-new-1-flip-strong} it is sufficient to show that, given any subsets $\bar{X} \subseteq X'$ and $\bar{C} \subseteq C'$, with $|\bar{X}|, |\bar{C}| \leq (k+1)\sqrt{\alpha} n$, there are distinct vertices $\{x_1, \ldots, x_{k+1}\} \in X' \setminus \bar{X}$, and colours $\{c_1, \ldots, c_{k+1}\} \in C'\setminus \bar{C}$ such that $D[X',Y',C']$ has a tree rooted at $y_0$ with edges $x_iy_0$ in colour $c_i$  for each $i \in [k+1]$ and $x_iy_i$ has colour $c_{i+1}$ for every $i \in [k]$.

 We do this by first showing there are sufficiently many choices in $D$. That is, given that there is at least one choice of distinct vertices $\{x_1, \ldots, x_{k+1}\} \in V(D) \setminus (\bar{X} \cup \bar{Y} \cup \{y_0\})$, and colours $\{c_1, \ldots, c_{k+1}\} \in C(D)\setminus \bar{C}$, it follows that we can find at least $\sqrt{\alpha}n$ of the required structures, each appropriately disjoint from one another, in $D$. Note that the desired structure is fixed by an appropriate choice of $x_1$, and any vertex $v \in V(D)$ or colour $c \in C(D)$ appears in at most $k$ of the distinct structures given by different choices of $x_1$. In particular, fixing $x_1$ fixes edges $x_1y_0$ and $x_1y_1$ in colours $c_1$ and $c_2$ respectively, which in turn fixes $x_2$ so that $x_2y_0$ has colour $c_2$, which fixes $c_3$ to be the colour of the edge $x_2y_2$, and so on. Thus it remains to count the number of appropriate choices of $x_1$.

 We do this by induction on $k$, starting with $k=1$. In fact we prove that, for each $k$, there are at least $(1-7k^2\sqrt{\alpha})n$ valid choices for $x_1$.

 To this end, note that, in $D$, $y_0$ and $y_1$ have at least $(1-3\eps)n$ common in-neighbours. Of these, at most $2k\sqrt{\alpha} n$ are in $\bar{X} \cup \bar{Y} \cup \{y_0\}$, making them bad choices. Of the remaining choices, at most $2k\sqrt{\alpha} n$ yield a pair of colours $(c_1, c_2)$ (on the edges $x_1y_0$ and $x_1y_1$) such that $\{c_1, c_2\} \cap  \bar{C} \neq \emptyset$, and at most one choice yields $c_2=c(y_1y_0)$. Now for the remaining at least $(1- 3\eps - 4k\sqrt{\alpha})n-1$ choices for $x_1$, each yields an ordered pair $(c_1, c_2)$ such that any colour in $C(D) \setminus \bar{C}$ appears at most once as the first element and at most once as the second element. (That is, given $x_1$ yielding $(c_1, c_2)$ such that $c(x_1y_0)=c_1$ and $c(x_1y_1)=c_2$, there is no $x_1'$ yielding $(c_1', c_2)$ with   $c(x_1'y_0)=c_1'$ and $c(x_1'y_1)=c_2$, or $(c_1, c_2')$ with $c(x_1'y_0)=c_1$ and $c(x_1'y_1)=c_2'$. This all follows since $D$ is properly edge-coloured.) Thus, any of these pairs for which there is a $c_2$-colour in-edge incident to $y_0$ avoiding $\bar{X} \cup \bar{Y}$ yields the desired structure. (Note that such an in-edge cannot have its other endpoint as $x_1$ since $c(x_1y_0)=c_1 \neq c_2$, and $c_2$ was chosen so that $c(y_1y_0) \neq c_2$.) Since there are at most $4\eps n$ colours that any vertex does not see, this rules out at most $4\eps n + 2k\sqrt{\alpha} n$ additional pairs $(c_1, c_2)$ and so at most $4\eps n+2k\sqrt{\alpha}n$ choices for $x_1$. Thus we are left with at least $(1- 7\eps - 6k\sqrt{\alpha})n-1 \geq (1-7\sqrt{\alpha})n$ choices for $x_1$ when $k=1$, as required.

 Suppose now that the statement holds for all $k'<k$, and $k \leq 200$. Fix $y_k \in \bar{Y}$ and let $\bar{Y}'=\bar{Y}\setminus\{y_k\}$. By induction there are at least $(1-7(k-1)^2\sqrt{\alpha})n$ choices for $x_1$ leading to valid distinct choices of $x_2, \ldots, x_{k}$ and $c_1, \ldots, c_k$ such that $c(x_iy_0)=c_i$ for every $i \in [k]$ and $c(x_iy_i)=c_{i+1}$ for every $i \in [k-1]$. Of these, they only do not extend to a valid structure for $\bar{Y}$ if either (i) $x_i = y_k$ for some $i \in [k]$, which can happen for $O(k)=O(1)$ such choices of $x_1$, (ii) $x_ky_k \notin E(D)$ which can happen for at most $\eps n$ choices of $x_1$, (iii) $c(x_ky_k) \in \bar{C} \cup \{c_1, \ldots, c_k\}$, which can happen for at most $2k\sqrt{\alpha}n$, (iv) $c(x_ky_k)=c(y_ky_0)$ which can happen for at most one of these, (v) $y_0$ has no incident in-edge in colour $c(x_ky_k)$, which can happen for at most $4\eps n$ choices of $x_1$ or (vi) the incident in-edge to $y_0$ in colour $c(x_ky_k)$ has its other endpoint in $\bar{Y} \cup \bar{X}$, which can happen for at most $2k\sqrt{\alpha}n$ choices. In total this excludes an additional at most $5k\sqrt{\alpha}n$ choices leaving at least $(1-7k^2\sqrt{\alpha})n$ valid choices, as desired.

 Now, this shows that in $D$, there are at least $\sqrt{\alpha} n$ choices of disjoint sets $\bar{X_i}$ in $V(D) \setminus (\bar{Y} \cup \{y_0\})$ and $\bar{C_i}$ in $C(D)$ of the desired form.
 The probability that each of these appears in $D[X,(\bar{Y} \cup \{y_0\}),C]$ is then $p^{2k+2}$ and this is independent since they are all disjoint.
 So by Chernoff's bound, with high probability there are at least $\sqrt{\alpha} p^{2k+2} n/2$ such sets in $D[X,(\bar{Y} \cup \{y_0\}),C]$. Moreover, modifying $X, Y$ and $C$ to $X', Y'$ and $C'$ by $\eps n$ elements, maintaining disjointness between $X'$ and $Y'$ can only affect $2\eps n$ of these structures, leaving at least $\sqrt{\alpha} p^{2k} n/2-2\eps n \geq \alpha n$ such sets in $D[X',(\bar{Y} \cup \{y_0\}),C']$, each of which yields the desired structures in $D[X', C', Y']$. Taking a union bound over all choices for $\{y_0\}$ and $\bar{Y}$, the statement still holds.

 Note that the proof of \ref{prop-pseud-add-new-1-flip} is complete, since it follows directly from \ref{prop-pseud-add-new-1-flip-strong}. Having established \ref{prop-pseud-add-new-1-flip-strong}, the proofs of \ref{prop-pseud-add-3-flip} and \ref{prop-alt-flip} follow almost verbatim those of the equivalent properties in \cite[Proposition~3.12]{montgomery2023proof}.

\smallskip

  \noindent\textbf{\ref{PrP}.\ref{PrP-1}.\ref{prop-pseud-add-3-flip}}: Let $y_0, y' \in Y'$ be distinct. Let $s$ be the largest integer for which there are disjoint sets $X_1, \ldots, X_s \subseteq X'$ and $C_1, \ldots, C_s \subseteq C'$ with $|X_i|=|C_i|=4$ and $Y_1, \ldots, Y_s \subseteq Y' \setminus \{y_0, y'\}$ with $|Y_i|=3$ for all $i \in [s]$, for which $D[X_i, C_i, Y_i \cup \{y_0, y'\}]$ contains both a matching of $4$ colour $y_0$ edges and an exactly-$Y_i \cup \{y'\}$-rainbow matching. Suppose for a contradiction that $s < \alpha n/12$, and let $\bar{X}=\cup_{i \in [s]} X_i$, $\bar{C}=\cup_{i \in [s]} C_i$ and $\bar{Y}=\cup_{i \in [s]} Y_i$, so that $|\bar{X}|, |\bar{C}|< \alpha n/3$ and $|\bar{Y}| \leq \alpha n/4$. Now from \ref{prop-pseud-add-new-1-flip-strong} with $k=1$ and $\bar{Y}=\{y'\}$, we can find sets $\hat{X} \subseteq X' \setminus (\bar{X} \cup \{x_0\})$ and $\hat{C} \subseteq C'\setminus (\bar{C} \cup \{c_0\})$ with $|\hat{X}|=|\hat{C}|=2$, where $x_0, c_0$ are as in \ref{prop-pseud-add-2-flip}, so that $D[\hat{X}, \hat{C}, \{y_0, y'\}]$ contains two edges of colour $y_0$ and one of colour $y'$. Let $x \in \hat{X}$ and $c \in \hat{C}$ be such that the edge in $D[\hat{X}\setminus\{x\}, \hat{C}\setminus\{c\}, \{y_0, y'\}]$ has colour $y'$. Now, using \ref{prop-pseud-add-2-flip}, there are choices for sets $\hat{X}' \subseteq X' \setminus (\bar{X} \cup \hat{X})$, $\hat{C}' \subseteq C' \setminus (\bar{C} \cup \hat{C})$ with $|\hat{X}'|=|\hat{C}'|=2$ and $\hat{Y}' \subseteq Y' \setminus (\bar{Y} \cup \{y_0, y'\})$ with $|\hat{Y'}|=3$ such that $D[\hat{X}', \hat{C}', \{y_0\}]$ has a perfect matching and $D[\hat{X}' \cup \{x\}, \hat{C}' \cup \{c\}, \hat{Y}']$ contains an exactly-$\hat{Y}'$-rainbow matching avoiding using the edge $xc$. Then letting $X_{s+1}=\hat{X} \cup \hat{X}'$, $C_{s+1}=\hat{C} \cup \hat{C}'$ and $Y_{s+1}=\hat{Y}'$ contradicts the choice of $s$ showing that $s \geq \alpha n/12$.

\smallskip

  \noindent\textbf{\ref{PrP}.\ref{PrP-1}.\ref{prop-alt-flip}}: Set $k=100$ and let $y_0 \in Y'$. Take the largest $r \in \mathbb{N}$ for which there are disjoint sets $X_1, \ldots, X_r \subseteq X'$, $C_1, \ldots, C_r$ in $C'$ and $Y_1, \ldots, Y_r$ in $Y' \setminus \{y_0\}$ with $|X_i|=|C_i|=|Y_i|=k$ for each $i \in [r]$, such that $D[X_i, C_i, Y_i \cup \{y_0\}]$ contains an exactly-$Y_i$-rainbow matching and a perfect matching of colour $y_0$ edges. Fix such sets $X_i, C_i, Y_i$, $i \in [r]$. We first show that $r \geq 2\alpha^2n$. Suppose not, for a contradiction. Let $s \leq 25$ be maximal such that there are disjoint sets $X_1', \ldots, X_s'$ in $X' \setminus (X_1 \cup \ldots \cup X_r)$, $C_1', \ldots, C_s'$ in $C' \setminus (C_1 \cup \ldots \cup C_r)$ and $Y_1', \ldots, Y_s'$ in $Y' \setminus (Y_1 \cup \ldots \cup Y_r \cup \{y_0\})$ with $|X_i'|=|C_i'|=|Y_i'|=4$ for each $i \in [s]$, so that $D[X_i', C_i', Y_i' \cup \{y_0\}]$ contains a matching of $4$ colour-$y_0$ edges and an exactly-$Y_i'$-rainbow matching. Note that if $s<25$ then $|\cup_{i \in [r]} X_i|,|\cup_{i \in [r]} C_i|,|\cup_{i \in [r]} Y_i|\leq 2k\alpha^2n \leq \alpha n/2$, and so \ref{prop-pseud-add-3-flip} immediately gives the required contradiction.  Thus $s=25$. But now, letting $X_{r+1}=\cup_{i \in [s]} X_i'$, $C_{r+1}=\cup_{i \in [s]} C_i'$ and $Y_{r+1}=\cup_{i \in [s]} Y_i'$ gives a contradiction to the choice of $r$, so we have that $r \geq 2\alpha^2n$.

  Now we show that the sets $X_i, C_i$ and $Y_i$ have the desired property. Indeed, let $\bar{Y} \subseteq Y' \setminus \{y_0\}$, with $|\bar{Y}| \leq k$. Note that there are at least $\alpha^2n$ values of $i \in [r]$ for which $\bar{Y} \cap Y_i = \emptyset$, and by \ref{prop-pseud-add-new-1-flip-strong} there are disjoint sets $\hat{X}_1, \ldots \hat{X}_{\alpha n} \subseteq X'$ and $\hat{C}_1, \ldots \hat{C}_{\alpha n} \subseteq C'$ with $|\hat{X}_i|=|\hat{C_i}|=k+|\bar{Y}|+1$ and $D[\hat{X}_i, \hat{C}_i, \bar{Y} \cup Y_i \cup \{y_0\}]$ contains both a matching of $k+|\bar{Y}|+1$ colour-$y_0$ edges and a $(\bar{Y}\cup Y_i)$-rainbow matching with $k+ |\bar{Y}|$ edges, completing the proof.

\subsubsection{Proof that \ref{PrP}.\ref{PrP-2} holds with high probability}\label{sec_nonflip}

Since $D$ is $(n, \eps)$-typical, we have that $|C(D)|=(1 \pm 2\eps)n$. In particular, for every pair of vertices $u,v \in V(D)$ the number of colours that both $u$ and $v$ see as edges is at least $(1-4\eps)n$. We also have that the number of non-edges in $D$ is at most $2\eps n^2$.

\begin{figure}[h]
\begin{center}\begin{tikzpicture}[scale=0.8]
\def\vxrad{0.07cm}
\def\horunit{1.2}
\def\edgelength{0.4}
\def\betweenrows{0.5}


\def\gapratio{1}
\def\biggergap{2}

\foreach \num in {0,1}
{
\coordinate (B\num) at ($(0,0)+\num*4*\gapratio*(1,0)+\gapratio*(-0.5,0.5)$);
\coordinate (C\num) at ($(0,0)+\num*4*\gapratio*(1,0)+\gapratio*(0.5,0.5)$);
\coordinate (E\num) at ($0.5*(B\num)+0.5*(C\num)+0.86602540*\gapratio*(0,1)$);
}

\coordinate (C1) at ($0.5*(B0)+0.5*(C0)-(0,1)$);
\coordinate (C2) at ($(C1)+(0,1)$);
\coordinate (C3) at ($0.5*\gapratio*(1,0)$);
\coordinate (C4) at ($-1*(C3)$);
\coordinate (X1) at (B0);
\coordinate (X2) at (C0);
\coordinate (X3) at ($(B0)-(0,1)$);
\coordinate (X4) at ($(C0)-(0,1)$);

\def\bit{0.375}
\draw ($(C2)+(0,\bit)$) node {$c''$};
\draw ($(C1)+(0,-\bit)$) node {$c'$};
\draw ($(C4)-(\bit,0)$) node {$c$};

\draw [white] ($(C3)+2.5*(\gapratio,0)$) -- ($(C3)+2.5*(\gapratio,0)$);


\def\sm{0.2};

\foreach \num/\numm/\coll/\fillcoll in {0/0/red!50/red!25,0/1/blue!50/blue!25,0/2/green!50/green!25,0/3/orange!50/orange!25}
{
\begin{scope}[transform canvas={rotate=\numm*90}]
{
\draw [thick,\coll] (C\num) -- (B\num);
}
\end{scope}
}
\foreach \num in {0}
{
\foreach \numm in {0,1,2,3}
{
\begin{scope}[transform canvas={rotate=\numm*90}]
{
\draw [fill] (B\num) circle [radius=\vxrad];
}
\end{scope}
}
}

\begin{scope}[transform canvas={shift={(2*\gapratio,0)}}]
\foreach \num/\numm/\coll/\fillcoll in {0/0/red!50/red!25,0/1/blue!50/blue!25,0/2/green!50/green!25,0/3/orange!50/orange!25}
{
\begin{scope}[transform canvas={rotate=\numm*90}]
{
\draw [thick,\coll] (C\num) -- (B\num);
}
\end{scope}
}
\foreach \num in {0}
{
\foreach \numm in {0,1,2,3}
{
\begin{scope}[transform canvas={rotate=\numm*90}]
{
\draw [fill] (B\num) circle [radius=\vxrad];
}
\end{scope}
}
}
\draw ($(C2)+(0,\bit)$) node {$c''$};
\draw ($(C1)+(0,-\bit)$) node {$c'$};
\draw ($(C4)-(\bit,0)$) node {$c$};
\end{scope}

\draw [white] (0,1.25) -- (0,-1.25);

\def\bitt{0.625};
\draw ($(X1)+\bitt*(-0.5,0.5)$) node {$u_1$};
\draw ($(X2)+\bitt*(0.5,0.5)$) node {$u_4$};
\draw ($(X3)+\bitt*(-0.5,-0.5)$) node {$u_2$};
\draw ($(X4)+\bitt*(0.5,-0.5)$) node {$u_3$};

\begin{scope}[transform canvas={shift={(2*\gapratio,0)}}]
\draw ($(X1)+\bitt*(-0.5,0.5)$) node {$v_1$};
\draw ($(X3)+\bitt*(-0.5,-0.5)$) node {$v_2$};
\draw ($(X4)+\bitt*(0.5,-0.5)$) node {$v_3$};
\draw ($(X2)+\bitt*(0.5,0.5)$) node {$v_4$};
\end{scope}

\end{tikzpicture}\hspace{0.3cm}
\begin{tikzpicture}
\draw [white] (-0.5,0) -- (0.5,0);
\draw [dashed] (0,-1.2) -- (0,1);.2
\end{tikzpicture}
\begin{tikzpicture}[scale=0.8]
\def\vxrad{0.07cm}
\def\horunit{1.2}
\def\edgelength{0.4}
\def\betweenrows{0.5}


\def\gapratio{1}
\def\biggergap{2}

\def\hggt{2.5}

\def\hanglow{-1.75}
\def\widdd{9}
\draw [white] (\widdd,-\hanglow) -- (\widdd,-\hanglow-\hggt);

\foreach \num in {1}
{
\coordinate (A\num) at ($(0,0)+\num*4*\gapratio*(1,0)$);
\coordinate (B\num) at ($(0,0)+\num*4*\gapratio*(1,0)+\gapratio*(1,0)$);
\coordinate (C\num) at ($(0,0)+\num*4*\gapratio*(1,0)+\gapratio*(2,0)$);
\coordinate (D\num) at ($(0,0)+\num*4*\gapratio*(1,0)+\gapratio*(3,0)$);
\coordinate (E\num) at ($0.5*(B\num)+0.5*(C\num)+0.86602540*\gapratio*(0,1)$);
\coordinate (F\num) at ($0.5*(B\num)+0.5*(C\num)-0.86602540*\gapratio*(0,1)$);
}
\foreach \num in {2}
{
\coordinate (B\num) at ($(0,0)+\num*4*\gapratio*(1,0)+\gapratio*(0,0)$);
\coordinate (D\num) at ($(0,0)+\num*4*\gapratio*(1,0)+\gapratio*(1,0)$);
\coordinate (A\num) at ($0.5*(D1)+0.5*(B\num)+0.86602540*\gapratio*(0,1)$);
\coordinate (C\num) at ($0.5*(B\num)+0.5*(D\num)+0.86602540*\gapratio*(0,1)$);
\coordinate (E\num) at ($0.5*(B\num)+0.5*(C\num)+0.86602540*\gapratio*(0,1)$);
\coordinate (F\num) at ($0.5*(B\num)+0.5*(C\num)-0.86602540*\gapratio*(0,1)$);
}
\coordinate (G1) at ($0.5*(D1)+0.5*(A2)+0.86602540*\gapratio*(0,1)$);
\coordinate (G2) at ($0.5*(D1)+0.5*(A2)+0.86602540*\gapratio*(0,1)+4*\gapratio*(1,0)$);
\coordinate (A30) at ($(A2)+2*\gapratio*(1,0)$);
\foreach \num in {3,4}
{
\coordinate (A\num) at ($(0,0)+\num*4*\gapratio*(1,0)+\biggergap*(1,0)$);
\coordinate (B\num) at ($(0,0)+\num*4*\gapratio*(1,0)+\gapratio*(1,0)+\biggergap*(1,0)$);
\coordinate (C\num) at ($(0,0)+\num*4*\gapratio*(1,0)+\gapratio*(2,0)+\biggergap*(1,0)$);
\coordinate (D\num) at ($(0,0)+\num*4*\gapratio*(1,0)+\gapratio*(3,0)+\biggergap*(1,0)$);
\coordinate (E\num) at ($0.5*(B\num)+0.5*(C\num)+0.86602540*\gapratio*(0,1)$);
\coordinate (F\num) at ($0.5*(B\num)+0.5*(C\num)-0.86602540*\gapratio*(0,1)$);
}

\coordinate (G3) at ($0.5*(D3)+0.5*(A4)+0.86602540*\gapratio*(0,1)$);

\def\sm{0.2};
\foreach \num in {2}
{
\draw [thick,black!50,fill=black!25,densely dotted] (A\num) -- (B\num);
\draw [thick,black!50,fill=black!25,densely dotted] (C\num) -- (D\num);
}

\foreach \num in {2}
{
\draw [thick,red!50,fill=red!25] (C\num) -- (B\num);
}

\foreach \num/\numplus in {1/2}
{
\draw [thick,blue!50,fill=blue!25] (A\numplus) -- (D\num);
}

\foreach \num/\numplus in {2/3}
{
\draw [thick,orange!50,fill=orange!25] (A30) -- (D\num);
}

\def\bit{0.375}
\draw ($0.5*(D1)+0.5*(A2)-(0.7*\bit,-0.1*\bit)$) node {$c_1$};
\draw ($0.5*(B2)+0.5*(C2)-(0.4*\bit,-0.6*\bit)$) node {$c_2$};
\draw ($0.5*(A30)+0.5*(D2)+(0.7*\bit,-0.1*\bit)$) node {$c_3$};
\draw ($0.5*(A2)+0.5*(B2)-(0.2*\bit,0.8*\bit)$) node {$c_0$};
\draw ($0.5*(C2)+0.5*(D2)-(0.2*\bit,0.8*\bit)$) node {$c_0$};

\draw ($(D1)+(-\bit,0)$) node {$u$};
\draw ($(B2)-(0,\bit)$) node {$w_2$};
\draw ($(A2)+(0,\bit)$) node {$w_1$};
\draw ($(C2)+(0,\bit)$) node {$w_3$};
\draw ($(D2)-(0,\bit)$) node {$w_4$};
\draw ($(A30)+(\bit,0)$) node {$v$};

\foreach \lett in {A,B,C,D}
\foreach \num in {2}
{
\draw [fill] (\lett\num) circle [radius=\vxrad];
}
\foreach \lett in {D}
\foreach \num in {1}
{
\draw [fill] (\lett\num) circle [radius=\vxrad];
}


\draw [fill] (A30) circle [radius=\vxrad];

\end{tikzpicture}
\begin{tikzpicture}
\draw [white] (-0.5,0) -- (0.5,0);
\draw [dashed] (0,-1.2) -- (0,1);.2
\end{tikzpicture}
\begin{tikzpicture}[scale=0.8]
\def\vxrad{0.07cm}
\def\horunit{1.2}
\def\edgelength{0.4}
\def\betweenrows{0.5}


\def\gapratio{1}
\def\biggergap{2}

\def\hggt{2.5}

\def\hanglow{-1.75}
\def\widdd{9}
\draw [white] (\widdd,-\hanglow) -- (\widdd,-\hanglow-\hggt);

\foreach \num in {1}
{
\coordinate (A\num) at ($(0,0)+\num*4*\gapratio*(1,0)$);
\coordinate (B\num) at ($(0,0)+\num*4*\gapratio*(1,0)+\gapratio*(1,0)$);
\coordinate (C\num) at ($(0,0)+\num*4*\gapratio*(1,0)+\gapratio*(2,0)$);
\coordinate (D\num) at ($(0,0)+\num*4*\gapratio*(1,0)+\gapratio*(3,0)$);
\coordinate (E\num) at ($0.5*(B\num)+0.5*(C\num)+0.86602540*\gapratio*(0,1)$);
\coordinate (F\num) at ($0.5*(B\num)+0.5*(C\num)-0.86602540*\gapratio*(0,1)$);
}
\foreach \num in {2}
{
\coordinate (B\num) at ($(0,0)+\num*4*\gapratio*(1,0)+\gapratio*(0,0)$);
\coordinate (D\num) at ($(0,0)+\num*4*\gapratio*(1,0)+\gapratio*(1,0)$);
\coordinate (A\num) at ($0.5*(D1)+0.5*(B\num)+0.86602540*\gapratio*(0,1)$);
\coordinate (A30) at ($(A2)+2*\gapratio*(1,0)$);
\coordinate (C\num) at ($0.5*(B\num)+0.5*(D\num)+0.86602540*\gapratio*(0,1)$);
\coordinate (E\num) at ($(0,0)+\num*4*\gapratio*(1,0)+\gapratio*(2,0)$);
\coordinate (F\num) at ($(A2)+3*\gapratio*(1,0)$);
}
\coordinate (G1) at ($0.5*(D1)+0.5*(A2)+0.86602540*\gapratio*(0,1)$);
\coordinate (G2) at ($0.5*(D1)+0.5*(A2)+0.86602540*\gapratio*(0,1)+4*\gapratio*(1,0)$);

\foreach \num in {3,4}
{
\coordinate (A\num) at ($(0,0)+\num*4*\gapratio*(1,0)+\biggergap*(1,0)$);
\coordinate (B\num) at ($(0,0)+\num*4*\gapratio*(1,0)+\gapratio*(1,0)+\biggergap*(1,0)$);
\coordinate (C\num) at ($(0,0)+\num*4*\gapratio*(1,0)+\gapratio*(2,0)+\biggergap*(1,0)$);
\coordinate (D\num) at ($(0,0)+\num*4*\gapratio*(1,0)+\gapratio*(3,0)+\biggergap*(1,0)$);
\coordinate (E\num) at ($0.5*(B\num)+0.5*(C\num)+0.86602540*\gapratio*(0,1)$);
\coordinate (F\num) at ($0.5*(B\num)+0.5*(C\num)-0.86602540*\gapratio*(0,1)$);
}

\coordinate (G3) at ($0.5*(D3)+0.5*(A4)+0.86602540*\gapratio*(0,1)$);

\def\sm{0.2};
\foreach \num in {2}
{
\draw [thick,black!50,fill=black!25,densely dotted] (A\num) -- (B\num);
\draw [thick,black!50,fill=black!25,densely dotted] (C\num) -- (D\num);
\draw [thick,black!50,fill=black!25,densely dotted] (A30) -- (E\num);
\draw [thick,black!50,fill=black!25,densely dotted] (D1) -- (F\num);
}

\foreach \num in {2}
{
\draw [thick,red!50,fill=red!25] (C\num) -- (B\num);
}

\foreach \num/\numplus in {1/2}
{
\draw [thick,blue!50,fill=blue!25] (A\numplus) -- (D\num);
}

\foreach \num/\numplus in {2/3}
{
\draw [thick,orange!50,fill=orange!25] (A30) -- (D\num);
}

\foreach \num/\numplus in {2/3}
{
\draw [thick,green!50,fill=green!25] (E\num) -- (F\num);
}

\def\bit{0.375}
\draw ($0.5*(D1)+0.5*(A2)-(0.7*\bit,-0.1*\bit)$) node {$d$};
\draw ($0.5*(B2)+0.5*(C2)-(0.4*\bit,-0.6*\bit)$) node {$c_1$};
\draw ($0.5*(A30)+0.5*(D2)+(0.4*\bit,-0.6*\bit)$) node {$c_2$};
\draw ($0.5*(A30)+0.5*(D2)+(0.7*\bit,-0.1*\bit)+(\gapratio,0)$) node {$c_3$};


\foreach \lett in {A,B,C,D,E,F}
\foreach \num in {2}
{
\draw [fill] (\lett\num) circle [radius=\vxrad];
}
\foreach \lett/\num in {D/1}
{
\draw [fill] (\lett\num) circle [radius=\vxrad];
}


\draw [fill] (A30) circle [radius=\vxrad];

\end{tikzpicture}
\end{center}

\vspace{-0.4cm}

\caption{Relevant structures for \ref{prop-pseud-abs-prime}, \ref{prop-pseud-add-2} and \ref{prop-pseud-add-3} respectively in the definition of proper-pseudorandomness (see Definition~\ref{defn:pseud}). In the picture for \ref{prop-pseud-abs-prime}, we have $e=u_1u_2$ and $f=v_1v_2$. The edges $u_4u_3$ and $v_4v_3$ are depicted to have the same colour, but \ref{prop-pseud-abs-prime} does not require this. In the picture for \ref{prop-pseud-add-2}, we would take, for some $i$, $V_i=\{w_1,w_2,w_3,w_4\}$ and $C_i=\{c_1,c_2,c_3\}$.
In the picture for \ref{prop-pseud-add-3}, the dotted lines have colour $c_0$. In \ref{prop-pseud-add-3} we would take, for some $i$, $V_i$ to be the set of pictured vertices and $C_i=\{c_1,c_2,c_3\}$.   This figure first appeared in \cite{montgomery2023proof}.
}\label{fig:pseudpics}
\end{figure}
\smallskip
Observe that all subgraphs depicted above in Figure~\ref{fig:pseudpics} are bipartite.
In order to verify that the random sample $D[X,Y,C]$ contains such subgraphs, we will show that $D$ contains \textit{antidirected} versions of the above subgraphs, meaning versions obtained by fixing a bipartition of the subgraph, and directing all edges from one part to the other.

 \smallskip

 \noindent\textbf{\ref{PrP}.\ref{PrP-2}.\ref{prop-pseud-abs-prime}}: As in Section \ref{sec_flip}, we prove something stronger that does not require having $\sqrt{n}$ exceptional edges $f$. For fixed choices of $c$ and two distinct edges $e,f\in E_c(D)$ with $e=\{u_1,u_2\}$ and $f=\{v_1,v_2\}$, observe that there exist at least $(1-8\eps)n^2$ distinct choices of $u_3,u_4,v_3,v_4,c',c''$ that give two antidirected cycles (not necessarily vertex-disjoint) as depicted in Figure~\ref{fig:pseudpics} where $c,c'$ and $c''$ are distinct. Note also that fixing any of the variables $u_3,u_4,v_3,v_4,c',c''$ to be a particular value, there are at most $n$ distinct values of the remaining variables that yield $4$-cycles of the prescribed colour type. Furthermore, we observe that there exist at most $\binom{8}{2}n$ choices of $u_3,u_4,v_3,v_4,c',c''$ with the prescribed colour type that additionally satisfy that two distinct vertex variables have the same value. As a consequence, we have that there are at least $(1-10\eps)n^2$ distinct choices of $u_3,u_4,v_3,v_4,c',c''$ giving two vertex-disjoint cycles as depicted in Figure~\ref{fig:pseudpics} that are antidirected. Hence, McDiarmid's inequality implies that $p^6(1-10\eps)n^2/10$ such distinct choices of these two $4$-cycles $S_1,S_2$ exist in $D[X\cup\{u_1\} ,Y\cup \{u_2\},C\cup \{c\}]$ with odd-indexed vertices belonging to $X$ and even-indexed vertices belonging to $Y$ with exponentially high probability (because the expectation is of order $n^2$, the random variable is determined by $\leq 2n$ independent trials, and the random variable is $n$-Lipschitz). In particular, we can union bound this event over the choices of $c,e,f$ to conclude it holds with high probability for all $c,e,f$ simultaneously. Fix some choice of $X,Y,C$ so that this event holds. For any sets $X', Y', C'$ that have symmetric difference with $X,Y,C$ bounded by $\eps n$, and for any choices of $e,f,c$, only $3\eps n^2$ of the choices of $S_1,S_2$ can have a coordinate meeting $X\setminus X'$, $Y\setminus Y'$, or $C\setminus C'$. As $p^6(1-10\eps)n^2/10\geq \alpha n^2$, this concludes the proof.

 \smallskip

 \noindent\textbf{\ref{PrP}.\ref{PrP-2}.\ref{prop-pseud-add-2}}: Given distinct $u,v\in V(D)$ and $c_0\in C(D)$, it suffices to show that there are at least $n/10^{10}$ $5$-edge antidirected paths in $D$ with colour sequence as depicted in Figure~\ref{fig:pseudpics} where the paths are pairwise internally vertex-disjoint and colour-disjoint. The desired assertion about the small perturbations of the random sets then follows by applying Chernoff's bound and a union bound over choices of $u,v,c_0$, as before.
 \par To find the desired disjoint antidirected paths, first observe that there are at least $(1-10\eps)n^2$ choices of $w_1,w_2,w_3,w_4,c_1,c_2,c_3$ inducing a $5$-edge antidirected \textit{walk} between $u$ and $v$ with colour sequence $c_1,c_0,c_2,c_0,c_3$. Also observe that only $6n$ of such choices of $5$-edge walks can have an `undesirable colour-repetition' (meaning $c_i=c_0$ for some $i>0$ or $c_i=c_j$ for some $0<i<j$), and only $\binom{6}{2}n$ of the choices can have a vertex-repetition, so $(1-11\eps)n^2$ of these walks are indeed $5$-edge paths with $c_1,c_2,c_3$ being all distinct. Also observe that if we fix any of the variables $w_1,w_2,w_3,w_4,c_1,c_2,c_3$ to equal a particular value of $V(D)$ or $C(D)$, there exist at most $n$ choices of the remainder of the variables so that the variables collectively form a $5$-edge walk of the prescribed pattern. This implies that a maximal disjoint collection of $5$-edge antidirected paths of the desired form has the required size, because any such collection spanning less than $100n/10^{10}$ vertices and colours can then meet at most $4000n^2/10^{10}\ll (1-11\eps)n^2$ of the $5$-edge paths of the desired form.

 \smallskip

 \noindent\textbf{\ref{PrP}.\ref{PrP-2}.\ref{prop-pseud-add-3}}: Given $c_0,d\in C(D)$ the assertion boils down to showing that there are at least $n/10^{10}$ antidirected $8$-cycles in $D$ with `colour-type' $d,c_0,c_1,c_0,c_2,c_0,c_3,c_0$ (as indicated in Figure~\ref{fig:pseudpics}) that are pairwise vertex-disjoint and colour-disjoint except for on $c_0$ and $d$. The first observation is that there are at least $(1-20\eps)n^2$ closed walks on $8$ edges with the desired colour type. As before, if any of the colours $c_1,c_2,c_3$ or any of the vertices are fixed to have a particular value, the number of such closed walks on $8$ edges is at most $n$. Furthermore, observe also that the number of such closed walks that are not cycles can be at most $\binom{8}{2}n$, so there are at least $(1-30\eps)n^2$ choices of $c_1,c_2,c_3$ (which determines the vertices) for which the closed walks are in fact antidirected $8$-cycles. A maximal collection of vertex-disjoint and colour-disjoint (except for $c_0,d$) such $8$-cycles then meets the requirements, as if the maximal collection spanned less than $16n/10^{10}$ vertices and colours, it could meet at most $16^3n^2/10^{10}$ of the other valid choices of $8$-cycles, so then we could extend the collection.

 \smallskip
 \smallskip

For the remainder of the proof, set $\alpha'=10^{-2000}$.
 \smallskip

\noindent \textbf{\ref{PrP}.\ref{PrP-2}.\ref{prop-pseud-add-new-1}}: It suffices to show that the property holds with probability $1-O(n^{-100})$ for each fixed value of $c_0$, $0\leq k\leq 20$, and $\bar{C}\subseteq C(D)\setminus \{c_0\}$ with $|\bar{C}|=5k$, as then we can take a union bound over all combinations of $c_0$ and $\bar{C}$. We will show that, in $D$, there exist vertex-disjoint sets $\bar{V_1},\ldots, \bar{V}_{\alpha'n}$, each of size $2k+2$, where $D[\bar{V}_i]$ contains a spanning antidirected path with colour sequence alternating between a distinct element of $\bar{C}$ and $c_0$, starting and ending with a colour-$c_0$ edge. Note that $D[\bar{V}_i]$ then has a $c_0$-perfect matching as well as a $\bar{C}$-rainbow matching (missing just the endpoints). Furthermore, showing this is enough for \ref{prop-pseud-add-new-1}  as each such spanning path is in particular bipartite, and hence can get feasibly sampled into $D[X,Y,\{c_0\}\cup C]$, so at least $\alpha' p^{2k+2}n/10$ such subsets get sampled into $D[X,Y,\{c_0\}\cup C]$ with probability at least $1-e^{-\sqrt{n}}$. Deleting $\eps n $ elements from $X$ or $Y$ in $D[X',Y',\{c_0\}\cup C]$ can change the count of the sampled $\bar{V_i}s$ by at most $\eps n$, so their count overall would still be $\gg \alpha n$, as desired.
\par To argue that $\bar{V_1},\ldots, \bar{V}_{\alpha'n}$ exist, take a maximal collection of $\bar{V_i}$ and suppose towards a contradiction that $|\bigcup\bar{V_i}|< (2k+2)\alpha'n$. We show how to extend the collection. Call a vertex \textit{deficient} if there exists some colour in $\{c_0\}\cup \bar{C}$ such that the vertex has not got both an in- and an out-neighbour in that colour. Note in $D$ there are at most $2(5k+1)\cdot 2\eps n$ deficient vertices, call them $B$. Let $D'$ be the spanning subgraph of $D$ with edges of colour $\{c_0\}\cup \bar{C}$. The number of vertices at $D'$-distance at most $3k$ to a vertex in $B\cup \bigcup\bar{V_i}$ is at most $$(2(5k+1))^{3k}(4(5k+1)\eps n+ (2k+2)\alpha'n)\leq n/2.$$

\par Each of the remaining at least $n/2$ vertices meets an in-edge and an out-edge of every colour in $\{c_0\} \cup \bar{C}$, with the other endpoint of the each such edge not being in $B\cup \bigcup\bar{V_i}$ (else it would be at distance less than $3k$). Let $v_0$ be any such vertex. We can build a $c_0/\bar{C}$-alternating antidirected path $P$ covering $2k+2$ vertices and starting at $v_0$, which is disjoint from $\bigcup\bar{V_i}$ as follows. Without loss of generality, we will build such a path starting with an out-edge. By choice of $v_0$, we know that $v_0$ has a $c_0$-colour out-neighbour $u_0$. From $u_0$, over $k$ steps, at every step we want to add to the end of the current path an antidirected path of length two, so that the first edge is an in-edge in some colour from $\bar{C}$ which has not already been used, the second is an out-edge of colour $c_0$, and the vertices are distinct from all previous vertices used on the path (so that the antidirected path property is maintained). If we complete all $k$ steps of the process then we have found a path $P$ which allows us to extend the collection $\{\bar{V_i}\}$. Note that, at each step, we may assume that the vertex from which we are extending the path is not in $B\cup \bigcup\bar{V_i}$ (as otherwise it would be at distance less than $3k$ from $v_0$). Thus it has in-edges in all of the at least $5k$ colours from $\bar{C}$. Now, note that at most $k-1$ colours from $\bar{C}$ have been used so far in the path and thus at least $4k+1$ are still available. Let $V'$ be the set of in-neighbours of colours still available. Since the path also contains at most $2k$ vertices, at least $2k+1$ of the vertices in $V'$ have not already been used, call this set $V''$. Moreover, we know that all vertices in $V'$ are not in $B\cup \bigcup\bar{V_i}$ (as, yet again, if they were then they are at distance less than $3k$ from $v_0$). Looking at the vertices in $V''$ we know that each of these has a distinct $c_0$ out-neighbour. At most $k$ of these out-neighbours can lie on the existing path. Thus there is at least one choice of vertex in $V''$ which extends the path as desired. This allows us to extend the collection $\{\bar{V}_i\}$, contradicting its maximality and thus concluding the proof.

\smallskip

\noindent \textbf{\ref{PrP}.\ref{PrP-2}.\ref{prop-pseud-add-new-2}}: Set $k=100$ and fix $c_0$. We will check \ref{prop-pseud-add-new-2} holds with probability $1-O(n^{-2})$ and then we can conclude by a union bound. As before, we will first check the deterministic statement with $\alpha'$ playing the role of $\alpha$, finding the desired bipartite, antidirected subgraphs, and then the assertion for the $p$-random sets just follows by applying Chernoff's bound.
\par Set $r=10^{-10}n$. We find disjoint random vertex sets $V_1,\ldots, V_r\subseteq V(D)$ of size $2k$ and disjoint random colour sets $C_1,\ldots, C_r\subseteq C(D)\setminus \{c_0\}$ of size $k$ which, for each pair $(V_i, C_i)$, produce a random $c_0-C_i$ alternating antidirected cycle on vertex set $V_i$. The random process for obtaining these sets is defined iteratively. 
Suppose the sets $V_i,C_i$ are defined for $i<j\leq r$. We define $V_j,C_j$ as follows. First, pick a random vertex $v\in V(D)\setminus \bigcup_{i<j}V_i$ who has a $c_0$-out-neighbour outside $\bigcup_{i<j}V_i$, and initialise $v$ as the `current endpoint'. At every subsequent stage up until we have a path of length $2k-1$, do the following. Grow the random path by adding the $c_0$-out-neighbour $s$ of the current endpoint, and randomly picking an 
in-neighbour $w$ of $s$ which has the following properties: (1) $w$ isn't on the path built so far, (2) $w$ has a $c_0$-out-neighbour $x$ so that $x$ isn't on the path built so far, (3) the edges $(w,s)$ and $(v,x)$ both exist in $D$ and they have their colours outside $\bigcup_{i<j} C_i$ and the colours occurring along the path thus far. 
Once all $2k$ vertices are picked (yielding an antidirected path of length $2k-1$), the current endpoint picks $v$ as its next in-neighbour (noting that this is an edge of a colour we haven't seen thus far by the invariants of the algorithm). Define $V_j$ to be the $2k$ vertices we picked and define $C_j$ to be the $k$ colours occurring on the antidirected $2k$-cycle other than $c_0$. 

\par The only thing we need to record about this random process is that for any fixed colour sequence $\vec{c}:=c_1,\ldots, c_{k-1}$, the probability that the random process creating $V_j,C_j$ picks $\vec{c}$ as the first $k-1$ colours occurring along $C_j$ is at most $1/(n-100kr)^{k-1}$, as the process picks from at least $n-100kr$ choices at each stage, which follows from $D$ being $(n,\eps)$-typical.

\par Consider now a fixed $\bar{C}\subseteq C(D)\setminus \{c_0\}$ of size $k$, labelled as $\bar{c}_1,\ldots, \bar{c}_k$. We will show that with probability $1-n^{-10k}$, the required property holds for at least $r/2$ values $i\in [r]$. By a union bound over all $\bar{C}$, we can then conclude that the random choices above can be made to satisfy all $\bar{C}$ simultaneously.

Consider a random $C_i=(c_1,\ldots, c_k)$ as defined above. Call a vertex $v$ a \textit{candidate} if there exists a $c_0,\bar{c}_1,c_0$ (antidirected) path starting on $v$ (note there are at least $n-10\eps n$ many candidates $v$ by typicality). For each candidate $v$, consider $P_v$, the maximal length random antidirected walk starting at $v$ with colour sequence (starting with an out-edge) $$c_0,\bar{c}_1,c_0,c_1,c_0,\bar{c}_2,c_0,c_2,\ldots, c_0,c_{k-1},c_0,\bar{c}_k,c_0$$
where we terminate the (random) walk $P_v$ early if any of the neighbours of the appropriate colour are not available. Observe that if an instance of this random walk, $P_v$, does not terminate early and actually is an antidirected path, then its vertices can form a $\bar{V}$ with the desired matching properties. In this case, we say $P_v$ is \textit{good}. Define $\mathcal{L}_{\bar{C}}$ to be the set of sequences $(c_1,\ldots, c_{k-1})$ for which there are at least $n/10$ candidates $v$ for which $P_v$ is good.
\begin{claim}
     $|\mathcal{L}_{\bar{C}}|\geq 99n^{k-1}/100$.
\end{claim}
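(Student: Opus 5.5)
We prove the claim by a double-counting argument over sequences. Let $N$ be the number of pairs $(\vec c, v)$ with $\vec c=(c_1,\ldots,c_{k-1})$ a sequence of distinct colours of $C(D)\setminus(\{c_0\}\cup\bar C)$, $v$ a candidate, and $P_v$ (determined by $\vec c$) not good. It suffices to show $N\leq \delta n^{k}$ for some small $\delta$, say $\delta=10^{-4}$. Indeed, if more than $n^{k-1}/100$ sequences lie outside $\mathcal{L}_{\bar C}$, then each contributes at least $(1-10\eps)n-n/10\geq 4n/5$ bad candidates, so $N\geq 4n^{k}/500$. This is a contradiction. We must also account for sequences with a repeated colour or hitting $\{c_0\}\cup\bar C$. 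There are only $O(k^2 n^{k-2})$ of these, which is negligible, and $|C(D)|=(1\pm2\eps)n$, so the total number of sequences is at least $(1-O(\eps))n^{k-1}$.

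To bound $N$, fix a candidate $v$. We count the sequences $\vec c$ for which $P_v$ fails. Walking from $v$, the walk is deterministic given $\vec c$. The prefix $v\to$ ($c_0$ out) $\to$ ($\bar c_1$ in) $\to$ ($c_0$ out) exists because $v$ is a candidate. At each subsequent step $j$, the current endpoint $z_j$ is determined by $c_1,\ldots,c_{j-1}$. We then need the following.

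\begin{enumerate}
\item $z_j$ has an in-edge of colour $c_j$ whose other endpoint is new. This fails for at most $O(\eps n)+O(k)$ values of $c_j$, since every vertex sees all but $O(\eps n)$ colours and at most $O(k)$ colours lead back into the path.
\item The next three forced steps (colour $c_0$ out, colour $\bar c_{j+1}$ in, colour $c_0$ out) exist and land on new vertices.
\end{enumerate}

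The second condition is the delicate point. These forced steps do not depend on the free choice except through the vertex $w$ reached via $c_j$. Distinct $c_j$ give distinct $w$ by properness, so the map $c_j\mapsto w$ is injective. Hence the number of $c_j$ for which the forced continuation fails is at most the number of vertices $w$ that are deficient for $\{c_0,\bar c_{j+1}\}$ or whose forced continuation revisits one of the $O(k)$ path vertices. Deficient vertices number $O(\eps n)$ by typicality. For revisits, each forced step is an injection (the $c_0$-out-neighbour map and the $\bar c$-in-neighbour map are partial bijections), so each of the $O(k)$ forbidden targets is hit by at most $O(1)$ values of $w$.

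Summing over the $k-1$ free steps and the final forced segment, the number of bad $\vec c$ for a fixed $v$ is at most $(k-1)\cdot(O(\eps n)+O(k))\cdot n^{k-2}\leq \delta n^{k-1}$, using $\eps\ll 1$, $k=100$ and $n$ large. Summing over $v$ gives $N\leq \delta n^{k}$, which proves the claim.

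The main obstacle is making the injectivity argument for the forced continuation rigorous. Specifically, we must verify that, conditioning on the prefix, each bad event pins down $c_j$ to $O(\eps n+k)$ values uniformly, rather than merely in an average sense. We would handle it by arguing one step at a time as a product bound over a tree of choices, as above.
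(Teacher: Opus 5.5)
Your double counting is the paper's argument in complementary form. The paper lower-bounds the set $\mathcal{R}$ of good pairs $(v,\vec c)$ by $(1-100\eps)n^k$ and compares this with $|\mathcal{L}_{\bar C}|n+(n^{k-1}-|\mathcal{L}_{\bar C}|)n/10$; you instead upper-bound the bad pairs step by step. However, the step you flagged as delicate genuinely fails.

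Let $\sigma$ be the colour-$c_0$ out-neighbour map and $\tau$ the colour-$\bar c_{j+1}$ in-neighbour map. The forced continuation from the new tail $w$ is then $w\to\sigma(w)\leftarrow\tau\sigma(w)\to\sigma\tau\sigma(w)$. Injectivity of $c_j\mapsto w$, $\sigma$ and $\tau$ bounds the number of $w$ whose continuation lands on a \emph{fixed} earlier vertex. It says nothing about the diagonal event $\sigma\tau\sigma(w)=w$, in which the new head coincides with the new tail $w$ just added. This event occurs exactly when the arc $\sigma^{-1}(w)\sigma(w)$ has colour $\bar c_{j+1}$, and typicality does not make it rare.

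For example, take the digraph on $\mathbb{Z}_n$ with arc $ij$ coloured $j-i$; it is properly coloured and $(n,\eps)$-typical. With $c_0=1$ and $\bar c_{j+1}=2$, every $w$ gives $w\to w+1\leftarrow w-1\to w$. Consequences:
\begin{itemize}
\item For any $\bar C\ni 2$, no $P_v$ is good.
\item If $\bar c_1=2$, there are no candidates at all, so the $(1-10\eps)n$ candidates you invoke also fail.
\item Hence $\mathcal{L}_{\bar C}=\emptyset$, and the claim as stated is false for such $\bar C$.
\end{itemize}
The paper's own justification (the ``by typicality'' bounds on the number of candidates and on $|\mathcal{R}|$) overlooks exactly the same event. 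So this cannot be fixed by tightening your bookkeeping alone.

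What survives is the following. If $D$ comes from an undirected graph, then $\sigma^{-1}=\sigma$, so the arc $\sigma^{-1}(w)\sigma(w)$ would be a loop and the bad event never occurs. In that case your argument is correct as written; this covers the application to Andersen's conjecture, but not Latin-square digraphs. In general, each $w$ is a fixed point of $\sigma\tau\sigma$ for at most one colour $\bar c$, namely the colour of the arc $\sigma^{-1}(w)\sigma(w)$. So at most $1/\delta$ colours have more than $\delta n$ such fixed points. For $\bar C$ avoiding these colours, your step-by-step bound goes through once you add $\delta n$ to the bad choices of $c_j$ at each step and subtract it from the candidate count.

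For the remaining $\bar C$ the statement itself must change. One possible repair uses the fact that the target structure only needs a $(\bar C\cup C_i)$-rainbow matching with $k+|\bar C|-1$ edges:
\begin{itemize}
\item drop the worst colour of $\bar C$ from the walk and use $c_k$ instead;
\item replace your union bound by a product bound over the steps;
\item settle for $|\mathcal{L}_{\bar C}|\geq c\,n^{k-1}$ for a smaller absolute constant $c>0$, which is still enough for the subsequent Azuma step.
\end{itemize}
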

Let us show how to conclude assuming the claim. For every $i\in [r]$ we have that $C_i$ picks a sequence from $\mathcal{L}_{\bar{C}}$ as its first $k-1$ elements with probability at least $1-(n^{k-1}-|\mathcal{L}_{\bar{C}}|)\cdot \left(\frac{1}{n-100kr}\right)^{k-1}\geq 3/4$. With Azuma's inequality, we can deduce, with probability at least $1-n^{-10k}$, that at least $r/2$ of the random choices $C_i$ pick an element of $\mathcal{L}_{\bar{C}}$. For each such $C_i$ coming from $\mathcal{L}_{\bar{C}}$, we then have $n/10$ potential choices of $\bar{V}$ (that are vertex sets of the good $P_v$s) satisfying the desired properties, but we need to do a final refinement step to pick $\alpha' n$ many disjoint sets $\bar{V}_i$ as in the statement of \textbf{C7}. Take a maximal subcollection of the good $P_v$ that are vertex disjoint, call $\mathcal{J}$ the union of their vertex sets, and suppose towards a contradiction that $|\mathcal{J}|\leq 2k\alpha' n$. Note that the number of vertices that can be at distance $\leq 4k$ to $\mathcal{J}$ in the graph $D'$ that is the restriction of $D$ to edges of colour $C_i\cup \bar{C}\cup \{c_0\}$ is at most $|\mathcal{J}|(5k)^{5k}\leq n/100$. This means there is a vertex $v$ for which $P_v$ is good and $v$ is far away enough from $\mathcal{J}$ that $P_v$ has to be vertex-disjoint with $\mathcal{J}$, contradicting maximality as desired.
\par It remains to prove the claim, whose proof is essentially identical to that of Claim $8$ in \cite[Section 10]{montgomery2023proof}. That is, we first define $\mathcal{R}$ to be the set of alternating-paths of length $4k-1$ starting at any (candidate) vertex $v$ with colour sequence
$$c_0,\bar{c}_1,c_0,c_1,c_0,\bar{c}_2,c_0,c_2,\ldots, c_0,c_{k-1},c_0,\bar{c}_k,c_0$$
for some choices of $c_1,\ldots, c_{k-1}$. By typicality, we can conclude that $|\mathcal{R}|\geq (1-100\eps)n^k$, as for each potential choice of the starting vertex, or the next colour $c_i$, the number of potential new endpoints from which the $c_0,\bar{c}_{i+1}$ continuation of the path would not exist (or lead back into a vertex of the path built so far) is at most $100\eps n$ by typicality of $D$. The claim about the size of $\mathcal{L}_{\bar{C}}$ then follows from the observation that
$$|\mathcal{R}|\leq |\mathcal{L}_{\bar{C}}|n + (n^{k-1}-|\mathcal{L}_{\bar{C}}|)\cdot (n/10) $$
(justified by how each sequence in $\mathcal{L}_{\bar{C}}$ contributes at most $n$ elements to $\mathcal{R}$ by way of choosing $v$, and how the sequences outside $\mathcal{L}_{\bar{C}}$ can contribute at most $n/10$ to $|\mathcal{R}|$ by definition) and then rearranging to verify $|\mathcal{L}_{\bar{C}}|\geq 99n^{k-1}/100$, as desired.


\section{Proof of Theorem~\ref{thm:RSBcoveringstephyper}}\label{appendix:B}

This section is devoted to a proof of Theorem~\ref{thm:RSBcoveringstephyper}, following similar results in \cite{montgomery2023proof} and \cite{muyesser2022random}. We start by giving the usual nibble-type statement that we will use.

\begin{lemma}\label{lem:mainnibble}
    Let $1/n\llpoly \eps \llpoly \eta \llpoly  p\leq 1$ and let $\mathcal{H}$ be a simple 3-partite 3-uniform hypergraph which is $(n,p,\eps)$-regular. Then, $\mathcal{H}$ contains a matching with $(1-\eta)n$ edges.
\end{lemma}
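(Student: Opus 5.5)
This is the standard nibble statement (Pippenger--Spencer / Frankl--Rödl / Alon--Kim--Spencer type) for simple, almost-regular $3$-uniform hypergraphs. I would derive it from such a theorem, and the only real work is to check its hypotheses. Since $\mathcal{H}$ is $3$-partite with classes $A,B,C$, I read $(n,p,\eps)$-regularity as applying to each of $\mathcal{H}_{AB},\mathcal{H}_{BC},\mathcal{H}_{AC}$. That gives each class size $(1\pm\eps)n$. Because $\mathcal{H}$ is simple, every pair $\{a,b\}$ lies in at most one edge, so $d_{\mathcal{H}}(a)=d_{\mathcal{H}_{AB}}(a)=(1\pm\eps)pn$, and similarly for every vertex. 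Thus $\mathcal{H}$ has all vertex degrees $D(1\pm\eps)$ with $D:=pn$, and all codegrees at most $1=o(D)$.

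I would invoke a quantitative nibble theorem of the form given in \cite[Section~4.7]{alon2004probabilistic}, or better the Alon--Kim--Spencer bound: a $3$-uniform hypergraph with all degrees $(1\pm\delta)D$ and codegrees at most $1$ has a matching covering all but $O(N(\delta+D^{-1/2}\log^{3/2}D))$ of its $N$ vertices, for $\delta$ small enough. Here $N\le 3(1+\eps)n$, $\delta=\eps$, and $D=pn$ is linear in $n$, so the uncovered set has size $O(\eps n+n\cdot(pn)^{-1/2}\log^{3/2}n)$. Under $1/n\llpoly\eps\llpoly\eta\llpoly p$ this is at most $\eta n$. Each edge of the matching meets each class exactly once, so the matching has at least $|A|-\eta n\ge(1-2\eta)n$ edges. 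Adjusting constants, or replacing $\eta$ by $\eta/2$ at the start, gives $(1-\eta)n$.

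The main obstacle is the version of the nibble theorem I may quote. The qualitative Pippenger--Spencer statement only gives a $o(N)$ loss for fixed $\delta\to0$, and I need a polynomial dependence to fit the $\llpoly$ hierarchy. My first choice is the explicit Alon--Kim--Spencer / Vu bound, where the polynomial dependence on $\delta$ and $D$ is visible. If that is unavailable in the required form, the fallback is to run the Rödl nibble directly: in $O(\log(1/\eta))$ rounds, pick each edge with probability $\theta/D$, keep the isolated picks, and delete the covered vertices. Concentration in each round comes from Chernoff or Talagrand. Degrees remain $(1\pm\eps_i)D_i$ with $\eps_{i+1}\le\eps_i+O(\theta^2)+D_i^{-1/3}$, and after boundedly many rounds the leftover is at most $\eta n$, since all errors are polynomial in $\theta,\eps$ and $D_i$ stays linear in $n$.
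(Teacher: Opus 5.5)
Your reduction is correct, but the paper uses a different black box. It does not use a covering or nibble theorem. Instead it quotes the Molloy--Reed bound on the (list) chromatic index of simple $k$-uniform hypergraphs. With $k=3$ and $\Delta=(1+\eps)pn$, this splits $E(\mathcal{H})$ into at most $\Delta+c_3\Delta^{2/3}\log^4\Delta\le(1+\eps)pn+n^{3/4}$ matchings. Since $e(\mathcal{H})\ge(1-\eps)n\cdot(1-\eps)pn$, pigeonhole gives a matching with at least $(1-\eta)n$ edges.

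The advantage is that this black box needs only a \emph{maximum}-degree bound and codegree $1$, and it has an explicit polynomial error term. The lower degree bound is used only to count edges. So the $\llpoly$ hierarchy is immediate, and no almost-regular matching theorem or hand-run nibble is needed.

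Your route needs a covering theorem for almost-regular simple $3$-graphs with loss $O(N(\delta+D^{-c}))$. Alon--Kim--Spencer, as usually stated, assumes exact $D$-regularity, so the version you quote is not a direct citation. The statement you want is still true. It follows from exactly the Molloy--Reed-plus-pigeonhole argument, with $D^{-1/3}\log^4 D$ in place of $D^{-1/2}\log^{3/2} D$, which is more than enough here.

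Your fallback nibble also works, with two corrections. With bite size $\theta$ you need about $\theta^{-1}\log(1/\eta)$ rounds, not $O(\log(1/\eta))$. The degree error evolves as $\eps_{i+1}\le(1+O(\theta))\eps_i+O(\theta^2)+D_i^{-1/3}$. Both still give polynomial dependence.

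Your use of simplicity to identify $d_{\mathcal{H}}(v)$ with the projected degree is correct. So is the step from ``all but $\eta n$ vertices covered'' to ``at least $|A|-\eta n$ edges''.
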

\begin{proof}
    We deduce this from a result of Molloy and Reed, see Theorem~1 from~\cite{molloy2000near}. Applying that theorem with $k=3$, $\Delta=(1+\eps) p n$ gives us a decomposition of our $(n,p,\eps)$-regular hypergraph into $\Delta+c_k\Delta^{1-\frac{1}{k}}\log^4\Delta\leq (1+\eps)pn+n^{3/4}$ matchings (for some constant $c_k$). By the pigeonhole principle one of these matchings has at least $\frac{e(\mathcal{H})}{(1+\eps)pn+n^{3/4}}\geq \frac{((1-\eps)n)(1-\eps)pn)}{(1+2\eps)pn}\geq (1-\eta)n$ edges as required.
\end{proof}

The following is essentially a theorem of Thomason, see \cite[Lemma 3.5]{muyesser2022random} for a proof.
\begin{lemma}[\cite{muyesser2022random}]\label{Theorem_Thomassen}
Let $G$ be an $(n,\delta, \gamma)$-typical bipartite graph.
Then for every $A'\subseteq A, B'\subseteq B$, we have $e(A',B')=\delta|A'||B'|\pm 5\sqrt{(\delta + \gamma) n^3} + \gamma n^2$.
\end{lemma}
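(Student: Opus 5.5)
The plan is to use the expander-mixing property of typical bipartite graphs, via the spectral method or (as in Thomason's original jumbledness argument) the codegree condition through a Cauchy--Schwarz double counting. I would take the second route, since it needs only parts a)--c) of Definition~\ref{defn:typical}. Fix $A'\subseteq A$ and $B'\subseteq B$. For each $a\in A'$ write $d_{B'}(a)=|N(a)\cap B'|$, so that $e(A',B')=\sum_{a\in A'}d_{B'}(a)$. The goal is to show that $\sum_{a\in A}(d_{B'}(a)-\delta|B'|)^2$ is small, namely $O((\delta+\gamma)n^2|B'|)+O(\gamma n^2|B'|^2/n)$ or so. Cauchy--Schwarz over $a\in A'$ will then give $|e(A',B')-\delta|A'||B'||\le \sqrt{|A'|\sum_a(d_{B'}(a)-\delta|B'|)^2}$, and this is at most roughly $\sqrt{(\delta+\gamma)n^3}\cdot O(1)+\gamma n^2$ after bounding $|A'|,|B'|\le(1+\gamma)n$ and using $\sqrt{xy}\le x+y$ style splitting for the $\gamma$ term.

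To bound the second moment, I would expand it as
$$\sum_a d_{B'}(a)^2-2\delta|B'|\sum_a d_{B'}(a)+\delta^2|B'|^2|A|.$$
Two double counts handle the pieces. First, $\sum_a d_{B'}(a)=\sum_{b\in B'}d(b)=(1\pm\gamma)\delta n|B'|$ by the degree condition. Second,
$$\sum_a d_{B'}(a)^2=\sum_{b\in B'}d(b)+\sum_{b\neq b'\in B'}|N(b)\cap N(b')| = (1\pm\gamma)\delta n|B'|+|B'|(|B'|-1)(1\pm\gamma)\delta^2 n.$$
Substituting these in, the main terms cancel: $\delta^2n|B'|^2-2\delta^2n|B'|^2+\delta^2|B'|^2|A|$ with $|A|=(1\pm\gamma)n$. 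What remains is $\delta n|B'|+O(\gamma\delta^2 n|B'|^2+\gamma\delta n|B'|)$. Since $|B'|\le 2n$, this is $O((\delta+\gamma) n^2)+O(\gamma n^3)$.

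Multiplying by $|A'|\le 2n$ and taking square roots gives an error of order $\sqrt{(\delta+\gamma)n^3}+\sqrt{\gamma}\,n^2$. The main obstacle is the constants and the exact shape of the $\gamma$ term. The naive bound yields $\sqrt{\gamma}n^2$ rather than $\gamma n^2$, so I would have to be more careful about how the $\gamma$-error in $|A|$ interacts with the cancellation. The cleanest fix is to centre at $d(a)|B'|/n$-type quantities, or to restrict the sum to $a\in A'$ throughout instead of all of $A$, so that the $\gamma$-terms enter linearly and not under the square root. I would first try tracking the $\gamma|A|\delta^2|B'|^2$ term separately, as an additive error outside the Cauchy--Schwarz, with the constant $5$ absorbing the factors $(1+\gamma)^2\le 2$. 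If the stated $\gamma n^2$ form cannot be recovered exactly, the weaker $\sqrt\gamma n^2$ version would still suffice for its uses, and in any case the paper defers to \cite[Lemma 3.5]{muyesser2022random} for the precise constants.
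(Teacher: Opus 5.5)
Your second-moment/Cauchy--Schwarz computation is correct, and it is the standard proof of Thomason's jumbledness bound. The paper gives no proof of its own, only a pointer to \cite[Lemma 3.5]{muyesser2022random}. With the multiplicative typicality of Definition~\ref{defn:typical} and $\gamma\le 1$, your computation gives
\[
\sum_{a\in A}\bigl(d_{B'}(a)-\delta|B'|\bigr)^2\le(1+\gamma)\delta n|B'|+4\gamma\delta^2 n|B'|^2 ,
\]
and hence $|e(A',B')-\delta|A'||B'||\le 3\sqrt{\delta n^3}+6\sqrt{\gamma}\,\delta n^2$. The gap you flag at the end is real. None of your proposed repairs can close it: not recentring at $d(a)|B'|/n$, not summing only over $A'$, and not pulling the $\gamma|A|\delta^2|B'|^2$ term outside the square root. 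The reason is that the inequality with $+\gamma n^2$ is false. A codegree condition with relative error $\gamma$ only controls discrepancy up to order $\sqrt{\gamma}\,n^2$.

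Here is a counterexample. Fix $0<\beta<\min\{\delta,1-\delta\}$, and split $A=A_1\sqcup A_2$ and $B=B_1\sqcup B_2$ into halves. Include each pair of $(A_1\times B_1)\cup(A_2\times B_2)$ independently with probability $\delta+\beta$, and each pair of $(A_1\times B_2)\cup(A_2\times B_1)$ with probability $\delta-\beta$. Every vertex has expected degree $\delta n$. Two vertices on the same side have expected codegree $(\delta^2+\beta^2)n$ if they lie in the same half and $(\delta^2-\beta^2)n$ otherwise. So with high probability $G$ is $(n,\delta,\gamma)$-typical with $\gamma=2\beta^2/\delta^2$. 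Yet
\[
e(A_1,B_1)-\delta|A_1||B_1|=\frac{\beta n^2}{4}\pm n^{3/2}=\frac{\delta\sqrt{\gamma}}{4\sqrt2}\,n^2\pm n^{3/2},
\]
which exceeds $\gamma n^2+5\sqrt{(\delta+\gamma)n^3}$ once $\gamma<\delta^2/32$ and $n$ is large. With $\gamma=2\beta^2$, the same example also refutes the additive-error reading of typicality.

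So your $\sqrt{\gamma}$ bound is optimal up to constants. You should state and prove that version rather than try to recover $\gamma n^2$. As you guessed, it suffices for the paper's only use of the lemma. In Lemma~\ref{Lemma_one_random_set_nearly_regular} the graph is $(n,qp,2\eps)$-typical, so the error $4\eps n^2$ there becomes $10\sqrt{\eps}\,n^2$ for large $n$. This gives $|C^{-}|,|C^{+}|\le 10\eps^{1/4}n$ instead of $\eps^{1/2}n$, which is still at most $\eta n/2$ since $\eps\llpoly\eta$.
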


\begin{lemma}\label{lem:typicality}
    Let $1/n\llpoly \eps \llpoly \eta \llpoly  p,q\leq 1$. Let $H=(A,B,C)$ be a simple 3-partite 3-uniform hypergraph that is $(n,p,\eps)$-typical. Let $A'\subseteq A$ be $q$-random. Then, with probability at least $1-1/n^3$, the bipartite graph between $B$ and $C$ consisting of edges passing through $A'$ is  $(n,qp,2\eps)$-typical.
\end{lemma}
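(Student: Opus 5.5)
Write $G$ for the bipartite graph on $(B,C)$ whose edges are the pairs $bc$ with $abc\in E(H)$ for some $a\in A'$. Since $H$ is simple, each pair $bc$ lies in at most one edge of $H$, so there is a well-defined apex $a(bc)\in A$, and $bc\in E(G)$ exactly when $a(bc)\in A'$. We check the three conditions of Definition~\ref{defn:typical} for $G$ with parameters $(n,qp,2\eps)$. Condition a) is immediate, because $G$ has the same vertex classes $B,C$ as $H_{BC}$, and these have sizes $(1\pm\eps)n$.

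For condition b), fix $b\in B$. Its neighbours $c$ in $H_{BC}$ number $(1\pm\eps)pn$, and each $c$ determines an apex $a(bc)$. These apices are pairwise distinct: if $a(bc)=a(bc')$ with $c\ne c'$, then the two edges $a b c$ and $a b c'$ would share the pair $ab$, contradicting simplicity. Hence $d_G(b)=|N_{H_{BC}}(b)\cap\{c: a(bc)\in A'\}|$ counts how many of $(1\pm\eps)pn$ distinct fixed elements of $A$ fall into $A'$. This is binomial with mean $(1\pm\eps)qpn$. By Lemma~\ref{chernoff-bound} with $\gamma=n^{-1/3}$, we get $d_G(b)=(1\pm\eps)(1\pm n^{-1/3})qpn=(1\pm2\eps)qpn$ with failure probability $e^{-\Omega(n^{1/3})}$, using $1/n\llpoly\eps$. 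The same argument applies to each $c\in C$.

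For condition c), fix distinct $b,b'\in B$. The common neighbours in $H_{BC}$ are $(1\pm\eps)p^2n$ colours $c$, and we need both $a(bc)$ and $a(b'c)$ to lie in $A'$. For fixed $b$ the apices $a(bc)$ are distinct as $c$ varies, and likewise for $b'$. For a single $c$, we have $a(bc)\ne a(b'c)$, since otherwise the pair $ac$ would lie in two edges. So the count is $\sum_c \mathbf 1[a(bc)\in A']\mathbf 1[a(b'c)\in A']$, where each term has probability $q^2$. Each element $a\in A$ appears as an apex in at most two of the terms, once through $b$ and once through $b'$. The sum is therefore a function of independent indicators that changes by at most $2$ when one indicator flips. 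Its mean is $(1\pm\eps)q^2p^2n$, which matches the required $(qp)^2n$. McDiarmid's (Azuma's) inequality over the $|A|\le 2n$ independent coordinates gives concentration within $\pm n^{2/3}$ with probability $1-e^{-\Omega(n^{1/3})}$. This deviation is at most $\eps q^2p^2 n$ because $1/n\llpoly \eps,p,q$. The same argument applies to pairs in $C$.

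A union bound over $O(n^2)$ vertices and pairs then gives total failure probability at most $n^2e^{-\Omega(n^{1/3})}\le 1/n^3$. The main obstacle is the dependency in condition c), where a single $a\in A'$ can influence two terms of the sum; simplicity bounds this overlap by $2$, so bounded-differences concentration still applies. Chernoff alone would not suffice here, since the terms are not independent.
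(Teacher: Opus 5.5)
Your proof is correct and follows the paper's route. Both use Chernoff for the degrees, then McDiarmid for the codegrees (with simplicity of $H$ making the codegree count $2$-Lipschitz in the indicators of $A'$), then a union bound. The differences are cosmetic: the paper uses additive error $\pm\gamma n$ with $\gamma=n^{-1/5}$ where you use $n^{-1/3}$ relative and $n^{2/3}$ additive error, and you spell out explicitly why the apices $a(bc)$ are distinct, which the paper leaves implicit.
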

\begin{proof}
    For some $c,c'\in C$, let  $d_{A'}(c)=e(A',B,c)$ be the degree of $c$ into $A'$ and let $d_{A'}(c,c')$ denote the pair degree of $(c,c')$ into $A'$. We have $d_A(c)=(1\pm \eps)pn$ and  $d_A(c,c')=(1\pm \eps)p^2n$ by the typicality assumptions.
\par Note that $\mathbb E(d_{A'}(c))=qd_{A}(c)= (1\pm \eps)qp n$ and $\mathbb E(d_{A'}(c,c'))=q^2 d_A(c,c')= (1\pm \eps)q^2p^2 n$ for all $c,c'$. Set $\gamma:=n^{-1/5}$. By Chernoff's bound and a union bound, with probability at least $1-1/n^4$, for all $c$ we have $d_{A'}(c)=\mathbb E(d_{A'}(c))\pm \gamma n.$ Note that $d_{A'}(c,c')$ is $2$-Lipschitz. This is because for each $a$, there is at most one $b$ with $abc$ an edge, and one $b$ with $abc'$ an edge (using linearity of $H$). Hence by McDiarmid's inequality and a union bound, we have that with probability at least $1-1/n^4$, for each pair $c,c'\in C$, $d_{A'}(c,c')= \mathbb E(d_{A'}(c,c'))\pm \gamma n$. Corresponding bounds hold for $b,b'\in B$. With probability at least $1-4/n^4$ all these properties hold simultaneously. Whenever these properties all hold, we have that the bipartite graph $(B,C)$ consisting of edges through $A'$ is $(n,qp, 2\eps)$-typical (as $\gamma\ll \eps$) as desired.
\end{proof}

\begin{lemma}\label{Lemma_one_random_set_nearly_regular} Let $1/n\llpoly \eps \llpoly \eta \llpoly  p,q\leq 1$. Let $H=(A,B,C)$ be a simple 3-partite 3-uniform hypergraph that is $(n, p, \eps)$-typical. Let $A'\subseteq A$ be $q$-random. Then, with probability at least $1-1/n^2$, the following holds. For any $B'\subseteq B$, there are at most $\eta n$ vertices $c\in C$ with $e_{H}(A',B', c)\neq pq |B'|\pm \eps^{1/4} n$.
\end{lemma}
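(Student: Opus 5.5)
Fix an outcome of $A'$ that satisfies the conclusion of Lemma~\ref{lem:typicality}, which happens with probability at least $1-1/n^3$. Let $G$ be the bipartite graph between $B$ and $C$ whose edges are those $bc$ with $abc\in E(H)$ for some $a\in A'$. Since $H$ is simple, for each pair $b,c$ there is at most one $a$ with $abc\in E(H)$. Hence $e_H(A',B',c)=d_G(c,B')$ for every $c\in C$ and $B'\subseteq B$. By Lemma~\ref{lem:typicality}, $G$ is $(n,qp,2\eps)$-typical. This reduces the statement to a deterministic one: in a typical bipartite graph, for every $B'\subseteq B$, all but at most $\eta n$ vertices $c\in C$ satisfy $d_G(c,B')=pq|B'|\pm\eps^{1/4}n$.

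For the deterministic step, fix $B'\subseteq B$. Let $C^+$ be the set of $c\in C$ with $d_G(c,B')>pq|B'|+\eps^{1/4}n$, and define $C^-$ analogously with the reverse inequality. Summing degrees over $C^+$ gives
\[e_G(C^+,B')>pq|B'||C^+|+\eps^{1/4}n|C^+|.\]
Lemma~\ref{Theorem_Thomassen} applies to $G$ with $\delta=qp$ and $\gamma=2\eps$, and gives
\[e_G(C^+,B')\le pq|B'||C^+|+5\sqrt{(pq+2\eps)n^3}+2\eps n^2.\]
Comparing the two bounds yields $\eps^{1/4}n|C^+|\le 5\sqrt{2}\,n^{3/2}+2\eps n^2$, so $|C^+|\le 10n^{1/2}\eps^{-1/4}+2\eps^{3/4}n$. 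The same argument bounds $|C^-|$. Since $1/n\llpoly\eps\llpoly\eta$, both bounds are at most $\eta n/2$. The conclusion then holds for every $B'$ simultaneously, because the Thomason bound is uniform over all subsets, so no union bound over $B'$ is needed.

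The one delicate point is normalisation. Lemma~\ref{Theorem_Thomassen} is stated for $(n,\delta,\gamma)$-typical graphs whose parts have size $(1\pm\gamma)n$, and the error terms $\pm 2\eps|B'||C^+|$ coming from the degree fluctuations are absorbed into the $\gamma n^2$ term. So one only needs that $\eps^{1/4}$ dominates $\sqrt{\eps}$, $n^{-1/2}$ and $\eps$ in the final count, which the hierarchy guarantees. Overall the probability of success is at least $1-1/n^3\ge 1-1/n^2$.
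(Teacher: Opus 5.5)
Your proposal is correct and follows the paper's proof essentially verbatim. You condition on the outcome of Lemma~\ref{lem:typicality}, apply Lemma~\ref{Theorem_Thomassen} to $C^+$ and $C^-$, and compare with the summed degree bounds; the only cosmetic difference is that you keep the $O(n^{3/2})$ error term explicit, whereas the paper absorbs it into a single $\pm 4\eps n^2$ and gets $|C^\pm|\le \eps^{1/2}n$.
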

\begin{proof}
   By Lemma~\ref{lem:typicality}, with probability at least $1-1/n^3$, we have that the bipartite graph between $B$ and $C$ consisting of edges passing through $A'$ is  $(n,qp,2\eps)$-typical. Supposing this property holds, by Lemma~\ref{Theorem_Thomassen}, for any $B'\subseteq B, C'\subseteq C$, we have $e_{H}(A',B', C')= qp|B'||C'|\pm 4\eps n^2$. Let $C^-$ be the set of vertices $c \in C$ with $e_{H}(A',B', c)< pq|B'|- \eps^{1/4} n$. We have that $e_{H}(A',B', C^-)< pq|B'||C^-|- {\eps}^{1/4} n|C^-|$ and $e_{H}(A',B', C^-)\geq pq|B'||C^-|- 4\eps n^2$ implying $|C^-| \leq \eps^{1/2} n$. Similarly letting $C^+$ be the set of vertices with $e_{H}(A',B', c)>pq|B'|+ \eps^{1/4} n$, we get $|C^+|\leq \eps^{1/2} n$. This implies the lemma.
\end{proof}

\begin{lemma}\label{Lemma_2_random_1_deterministic} Let $1/n\llpoly \eps \llpoly \eta \llpoly  p,q\leq 1$. Let $H=(A,B,C)$ be a simple 3-partite 3-uniform hypergraph that is $(n,p,\eps)$-typical. Let $A'\subseteq A$ be $q$-random, and let $B'\subseteq B$ be $q$-random, where $A'$ and $B'$ are not necessarily independent. Then, with probability at least $1-n^{-2}$, the following holds.
\par For any $C'\subseteq C$ of size $(1\pm \eps)qn$, there is a matching covering all but $\eta n$ vertices in $H[A', B', C']$.
\end{lemma}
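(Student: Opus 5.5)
Since $C'$ may be chosen adversarially after the random sets are fixed, the plan is to reduce to Lemma~\ref{lem:mainnibble}. Show that, with high probability over $A',B'$ alone, for every $C'$ of size $(1\pm\eps)qn$ the hypergraph $H[A',B',C']$ becomes $(qn,pq,\eps')$-regular after deleting a small set of vertices, for some $\eps'$ with $\eps\llpoly\eps'\llpoly\eta$. The matching from Lemma~\ref{lem:mainnibble} then covers all but $\eta n$ vertices, since the deleted vertices number far fewer than $\eta n$.

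First I fix the randomness. By Chernoff's bound, $|A'|,|B'|=qn\pm n^{0.6}$ with probability $1-o(n^{-2})$. Applying Lemma~\ref{lem:typicality} twice, with $A'$ as the random set and then with $B'$, shows that the bipartite graphs on $(B,C)$ through $A'$ and on $(A,C)$ through $B'$ are $(n,qp,2\eps)$-typical with probability $1-2/n^3$; each application uses only one random set, so the dependence between $A'$ and $B'$ is harmless. I also need the joint degrees: for $a\in A$, the number of $b\in B'$ with $ab$ in an edge whose third vertex lies in $A'$ is irrelevant, but $d_{A',B'}(c):=e_H(A',B',c)$ should be $pq^2n\pm\eps^{1/4}n$ for all but $\eta' n$ vertices $c\in C$. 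For this I apply Lemma~\ref{Lemma_one_random_set_nearly_regular} with the deterministic set $B'$, after conditioning on the typicality of the $(B,C)$-graph through $A'$. Its statement is uniform over all $B'\subseteq B$, so the dependence is again harmless, and I plug in $|B'|\approx qn$.

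Next I handle degrees of vertices in $A'$ and $B'$ into an arbitrary $C'$. For $a\in A'$, its link is a matching between $B$ and $C$; I need $|\{b\in B': ab c\in H,\ c\in C'\}|\approx pq\cdot q n$. Since $C'$ is arbitrary, this cannot hold for every vertex. Instead, Lemma~\ref{Theorem_Thomassen} applied to the typical $(A,C)$-graph through $B'$, with the sets $A'$ and $C'$, gives $e(A',C')=pq|A'||C'|\pm 4\eps n^2$. Arguing as in Lemma~\ref{Lemma_one_random_set_nearly_regular} with the sets of vertices of too-low and too-high degree (the same $C^-,C^+$ argument, now with $A$ in place of $C$), all but $\eps^{1/2}n$ vertices $a\in A'$ have degree $pq|C'|\pm\eps^{1/4}n$ into $(B',C')$. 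The symmetric statement holds for $B'$ using the $(B,C)$-graph through $A'$. For $C'$, the set of bad vertices from the previous paragraph has size at most $\eta'n$.

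Finally I delete the exceptional vertices from all three classes, at most $3\eta' n$ in total. Each surviving vertex loses at most $3\eta'n$ from its degree, because $H$ is simple: a deleted vertex kills at most one edge through a given vertex. I then rebalance the class sizes by deleting up to $\eps n$ further random-like vertices, again losing little degree. What remains is $(qn,pq,\eps'')$-regular with $\eps''\approx\eta'+\eps^{1/4}$ relative to the scale $qn$, and Lemma~\ref{lem:mainnibble} applies to give a matching missing at most $\eta n$ vertices once $\eta'\llpoly\eta$. The main obstacle is the uniformity over the adversarial $C'$ (and the correlation between $A'$ and $B'$). The key point is that every estimate is routed through Lemma~\ref{Theorem_Thomassen}, which holds simultaneously for all subsets once typicality through a single random set is secured. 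The only delicate bookkeeping is that regularity in Lemma~\ref{lem:mainnibble} needs all degrees controlled, which is why the few exceptional vertices must be removed rather than just counted.
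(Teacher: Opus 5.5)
Your proposal is correct and is essentially the paper's argument. The paper applies Lemma~\ref{Lemma_one_random_set_nearly_regular} three times with the roles of $(A,B,C)$ permuted: random $A'$ for the degrees in $C'$ and in $B'$, random $B'$ for the degrees in $A'$. You re-derive these same three estimates by hand from Lemmas~\ref{lem:typicality} and~\ref{Theorem_Thomassen}, and then, as the paper does, delete the exceptional vertices and invoke Lemma~\ref{lem:mainnibble}. (Your rebalancing step is unnecessary, since regularity only asks for class sizes $(1\pm\eps'')qn$, which already hold after the deletions.)
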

\begin{proof}
Fix some $\eta'$ with $\eps\llpoly \eta'\llpoly \eta$. With probability $\geq 1-3/n^2$, the hypothesis of Lemma~\ref{Lemma_one_random_set_nearly_regular} holds with $\eta'$ in place of $\eta$ for three applications, one using $(A, B, C)$ in the given order, one switching the roles of $B$ and $C$, and one permuting the roles to the order $(B,C,A)$. 
Furthermore both $A'$ and $B'$ have size $qn\pm n^{0.6}$ (by Chernoff's bound). This means that for all but $3\eta'n$ vertices, the degrees of vertices in $H[A', B', C']$ is $q^2pn\pm \eps^{1/4}n$. Deleting such vertices, we obtain a graph where the degrees are $q^2pn\pm 4\eta'n$. Applying Lemma~\ref{lem:mainnibble} with $4\eta'$ playing the role of $\eta$, we find a matching covering all but $\eta n$ vertices as desired.
\end{proof}

We can now give the main proof of this section.
\begin{proof}[Proof of Theorem~\ref{thm:RSBcoveringstephyper}]
    Let $\bar{q}$ satisfy $q-\eta/4\leq 3\bar{q}\leq q-\eta/10$ and $\bar{q}n\in \N$. Note that, as $q_A,q_B,q_C\geq 2\bar{q}$ we can take disjoint sets $A_1\cup A_2\subset A'$, $B_1\cup B_2\subset B'$ and $C_1\cup C_2\subset C'$ so that $A_1,A_2, B_1,B_2,C_1$ and $C_2$ are
each $\bar{q}$-random subsets (of $A$, $B$ or $C$).
By Lemma~\ref{Lemma_2_random_1_deterministic} applied three times, and by Chernoff's bound (and then using $3\bar{q}\leq q-\eta/10$), with high probability, we have the following properties.

\stepcounter{propcounter}
\begin{enumerate}[label = {{\textbf{\Alph{propcounter}\arabic{enumi}}}}]
\item For any $\bar{A}\subset A$ with $|\bar{A}|\geq \bar{q}n$, there is a matching in $\mathcal{H}[\bar{A},B_1,C_1]$ with size at least $(\bar{q}-\eta/4)n$.\label{prop-fin-cover-1}
\item For any $\bar{B}\subset B$ with $|\bar{B}|\geq \bar{q}n$, there is a matching in $\mathcal{H}[A_1,\bar{B},C_2]$ with size at least $(\bar{q}-\eta/4)n$.\label{prop-fin-cover-2}
\item For any $\bar{C}\subset C$ with $|\bar{C}|\geq \bar{q}n$, there is a matching in $\mathcal{H}[A_2,B_2,\bar{C}]$ with size at least $(\bar{q}-\eta/4)n$.\label{prop-fin-cover-3}
\item $|A_1\cup A_2|,|B_1\cup B_2|,|C_1\cup C_2|\leq (2\bar{q}+\eta/20)(1+\eps)n\leq (2\bar{q}+\eta/10)n\leq (q-\bar{q})n$. \label{prop-fin-cover-4}
\end{enumerate}

We now claim that we have the property in the theorem.
To see this, take arbitrary sets $\bar{A}\subset A$, $\bar B\subset B$, $\bar{C}\subset C$ with size $qn$ such that $A'\cup B'\cup C'\subset \bar{A}\cup \bar{B}\cup \bar{C}$. Let $\bar{A}'=\bar{A}\setminus (A_1\cup A_2)$, $\bar{B}'=\bar{B}\setminus (B_1\cup B_2)$ and $\bar{C}'=\bar{C}\setminus (C_1\cup C_2)$, noting that, by \ref{prop-fin-cover-4}, we have $|\bar{A}'|,|\bar{B}'|,|\bar{C}'|\geq \bar{q}n$.
By \ref{prop-fin-cover-1},~\ref{prop-fin-cover-2} and~\ref{prop-fin-cover-3}, there are matchings in $\mathcal{H}[\bar{A}',B_1,C_1]$, $\mathcal{H}[A_1,\bar{B}',C_2]$, and $\mathcal{H}[A_2,B_2,\bar{C}']$, each with size at least $(\bar{q}-\eta/4)n$. Combining these gives a matching with at least $(3\bar{q}-3\eta/4)n\geq (q- \eta)n$ edges.
\end{proof}

\end{document}